
\documentclass[12pt]{amsart}%
\usepackage{amsmath}
\usepackage{amsfonts}
\usepackage{amssymb}
\usepackage{graphicx}%
\setcounter{MaxMatrixCols}{30}
\providecommand{\U}[1]{\protect\rule{.1in}{.1in}}
\newtheorem{theorem}{Theorem}

\newtheorem{corollary}[theorem]{Corollary}

\newtheorem{example}[theorem]{Example}

\newtheorem{lemma}[theorem]{Lemma}

\newtheorem{proposition}[theorem]{Proposition}

\begin{document}
\title[Fr\'{e}chet completions]{Fr\'{e}chet completions of moderate growth old and (somewhat) new results }
\author{Nolan R. Wallach}
\maketitle

\begin{abstract}
This article has two objectives. The first is to give a guide to the proof of
the (so-called) Casselman-Wallach theorem as it appears in Real Reductive
Groups II. The emphasis will be on one aspect of the original proof that leads
to the new result in this paper which is the second objective. We show how a
theorem of van der Noort combined with a clarification of the original
argument in my book lead to a theorem with parameters (an alternative is one
announced by Berstein and Kr\"{o}tz). This result gives a new proof of the
meromorphic continulation of the smooth Eisenstein series.

\end{abstract}
\dedicatory{{\small Dedicated to David Vogan in honor of his 60th birthday. His deep
contributions to the algebraic aspects of representation theory helped me to
realize that I am an analyst.}}

\section{Introduction.}

In the past three years there has been new activity related to what has come
to be called the Casselman-Wallach Theorem (CW Theorem) notably in the thesis
of Vincent van der Noort [vdN] and the paper of Bernstein and Kr\"{o}tz [BK].
These works are concerned with the problem of inserting dependence on
parameters in our theory of Fr\'{e}chet completions of moderate growth. In
[vdN] there is a development of the theory of admissible $(\mathfrak{g}%
,K)$--modules depending on parameters with a goal of \ finding criteria that
imply that these families are imbeddable in families of smooth Fr\'{e}chet
completions. In [BK] another stated goal is to give a more elementary proof of
the CW Theorem. The complications in \textquotedblleft the\textquotedblright%
\ original proof in part, stem from the fact that there is no joint proof by
Casselman and me of the theorem. Casselman's proof (appearing in [C]) and the
proof of [BK] apply only to linear groups.  My version, which can only be
found in my second volume on real reductive groups ([RRGII]), proves the
theorem in the context of the class of reductive groups that appear in my book
with Borel ([BW]) .It had been my hope to include in this paper a proof of the
theorem for general semi-simple groups with a finite number of connected
components (i.e. allow infinite center for the identity component) but time
constraints made it impossible for me to complete the foundational work
necessary. The version in this paper implies the theorem for the case of
connected semi-simple Lie groups with finite center (e.g. the metaplectic
group or the metalinear group). Except for the greater generality of my
version the proof in [C] and that in [RRGII] are essentially the same (and is
basically the same as that of [BK], except for the handling of spherical
representations). In hindsight I must admit that Casselman's article is more
carefully written than my development (however, his version also has typos).
Beyond its obvious defects in exposition my argument can be deemed complicated
or at least \textquotedblleft non-elementary\textquotedblright\ because it
relies on results that are necessary in other contexts in the study of real
reductive groups. These results include, the asymptotic expansions of
generalized matrix coefficients of admissible representations, the theory of
intertwining operators developed by Vogen and me ([VW]) and the Langlands
quotient theorem. In each case the results are easily stated but their use in
a proof can certainly allow one to say that a proof that doesn't use these
results is more elementary (however [BK] uses the Langlands quotient theorem).
It is definitely not true that more elementary implies easier to understand
(compare Hadamard's proof of the prime number theorem to Selberg's
\textquotedblleft elementary proof\textquotedblright).

This paper is, for the most part, an attempt to explain my proof of the
theorem with some simplifications and (we hope) clarifications. It emphasizes
the places where it differs from that in [C] which can be seen most clearly in
the construction of the minimal completion. This result is given in here
detail with several simplifications. Also in the appendix to the construction,
there are a few results on $C^{\infty}$ vectors that I couldn't find in the
literature. The last section of this paper will show that the beautiful theory
of of van der Noort combined with an extended version of the argument in
subsection 4.3 allows us to add dependence on parameters. In particular we
show that the $C^{\infty}$ Eisenstein series has a meromorphic continuation
from the tube in which it is initially defined. Another reason for an extended
explanation of a 20 year old result is that the unique reference for the
general result is in a book that is out of print.

\section{A litany of notation}

We purpose of this section is to set up the standard notation. Let
$G_{\mathbb{C}}$ be a reductive algebraic group defined over $\mathbb{R}$ and
let $G_{\mathbb{R}}$ be its group of real points. Then a real reductive group
in the sense of [BW] is a finite covering group of an open subgroup of
$G_{\mathbb{R}}$. This can be described in elementary terms (as in [RRGI]) as
follows: Let $G_{\mathbb{R}}$ be a subgroup of $GL(n,\mathbb{R})$ that is the
locus of zeros of polynomial functions on $M_{n}(\mathbb{R})$ and is invariant
under matrix transpose. Then $G$ is a finite covering of an open subgroup of
$G_{o}\subset G_{\mathbb{R}}$ (for a proof that these notions are the same see
e.g. [W]).

It is convenient to assume that $G$ is given by the more elementary
definition. We make that assumption. So $K_{\mathbb{R}}=G_{\mathbb{R}}\cap
O(n)$ is a maximal compact subgroup. If $p:G\rightarrow G_{o}$ is the covering
homomorphism ($G_{o}\subset G_{\mathbb{R}}$ open) and if $K_{o}=K_{\mathbb{R}%
}\cap G_{o}$ then $K=p^{-1}(K_{\mathbb{R}}\cap G_{o})$ is a maximal compact
subgroup of $G$. We also fix an Iwasawa decomposition $G_{\mathbb{R}%
}=K_{\mathbb{R}}A_{\mathbb{R}}N_{\mathbb{R}}$ ($A_{\mathbb{R}}$ a maximal
abelian subgroup consisting of positive definite self adjoint elements and
$N_{\mathbb{R}}$ a maximal unipotent subgroup) so $G_{o}=K_{o}A_{\mathbb{R}%
}N_{\mathbb{R}}$ is an Iwasawa decomposition of $G_{o}$ and if $A$ and $N$ are
the identity components of $p^{-1}(A_{\mathbb{R}})$ and $p^{-1}(N_{\mathbb{R}%
})$ respectively then $KAN$ is an Iwasawa decomposition of $G$. Let $M$ denote
the centralizer of $A$ in $K$ then $MAN$ is a minimal parabolic subgroup of
$G$. As usual, we will call a parabolic subgroup of $G$ containing $MAN$ a
standard parabolic subgroup.

We will write $\left\Vert g\right\Vert $ for the Hilbert-Schmidt norm of%
\[
r(g)=\left[
\begin{array}
[c]{cc}%
p(g) & 0\\
0 & p(g^{-1})^{T}%
\end{array}
\right]  \in GL(2n,\mathbb{R)}%
\]
for $g\in G$. Then $\left\Vert ...\right\Vert $ is a right and left
$K$--invariant norm on $G$ (in the sense of [RRGI]). Recall that a norm is a
function on $G$, $g\mapsto\left\Vert g\right\Vert $, such that if
$S\subset\mathbb{R}$ is compact then ~$\{g\in G|\left\Vert g\right\Vert \in
S\}$ is compact and $\left\Vert xy\right\Vert \leq\left\Vert x\right\Vert
\left\Vert y\right\Vert ,x,y\in G$. We also recall that if $\left\Vert
...\right\Vert $ is a norm on $G$ and if $(\pi,H)$ is a strongly continuous
Banach representation of $G$ then there exist constants $C$ and $r$ such that
$\left\Vert g\right\Vert _{H}\leq C\left\Vert g\right\Vert ^{r}$ (see [RRGI] 2.A.2.2)

We use the standard notation $\mathfrak{g}$ for the complexified Lie algebra
of $G$ (thought of as left invariant complex vector fields on $G$),
$U(\mathfrak{g)}$ for the universal enveloping algebra (left invariant
differential operators) and $Z(\mathfrak{g})$ for the center of
$U(\mathfrak{g})$ (thought of as bi--invariant differential operators). We
define the form $B(X,Y)=trXY$ on $Lie(G)=Lie(G_{\mathbb{R}})$. Then $B$ is
non-degenerate and negative definite on $Lie(K)$. We set $C$ (resp. $C_{K}$)
equal to the Casimir operator on $G$ (resp. $K$) corresponding to $B$.

As usual, a $(\mathfrak{g},K)$--module is a module $V$ for $\mathfrak{g}$ that
is also a $K$ module with the following compatibility properties

1. Set $W_{v}=Span_{\mathbb{C}}(Kv)$ then $\dim W_{v}<\infty$ the action of
$K$ on $W_{v}$ is smooth and the action of $Lie(K)\subset\mathfrak{g}$ on
$W_{v}$ is the same as its action as a real subalgebra of $\mathfrak{g}$.

2. if $k\in K,X\in\mathfrak{g}$ and $v\in V$ then $kXv=\left(  Ad(k)X\right)
kv.$

We say that a $(\mathfrak{g},K)$--module is finitely generated if it is
finitely generated as a $U(\mathfrak{g})$--module.

We denote by $\mathcal{H}(\mathfrak{g},K)$ the category of $(\mathfrak{g}%
,K)$--modules that are admissible (if $W$ is a finite dimensional $K$--module
then $\dim Hom_{K}(W,V)<\infty$ with morphisms the $\mathfrak{g}$ and $K$
module homomorphisms) and finitely generated.

If $V$ is a $(\mathfrak{g},K)$--module and if $\gamma$ is an equivalence class
of irreducible representations of $K$ (that is, an element of $\widehat{K}$)
then we chose a representative, $V_{\gamma}$, for this class and denote by
$V(\gamma)=\sum TV_{\gamma}$ the sum over all $T\in Hom_{K}(V_{\gamma},V)$.
That is, $V(\gamma)$ is the $\gamma$--isotypic component of $V$.

A smooth Fr\'{e}chet module, $(\pi,V)$ for \thinspace$G$ is a homomorphism,
$\pi$, of $G$ to the continuous invertible operators on a Fr\'{e}chet space
$V$ such that the map%
\[
G\times V\rightarrow V
\]
given by%
\[
g,v\mapsto\pi(g)v
\]
is of class $C^{\infty}$ in $G$ and jointly continuous. If we have a smooth
Fr\'{e}chet module $(\pi,V)$ then we can differentiate to get a $\mathfrak{g}%
$--module structure on $V$. We use the notation $V_{K}$ for the space of all
$v\in V$ such that $\dim Span_{\mathbb{C}}(\pi(K)v)<\infty$. Then $V_{K}$ is a
$(\mathfrak{g},K)$ module. If $V_{K}$ is admissible or finitely generated then
we say that $V$ is admissible or finitely generated.

If $V$ is a smooth Fr\'{e}chet module then we say that $V$ is of moderate
growth if for every $\lambda$, a continuous seminorm on $V$, there exists
$\mu$ a continuous semi-norm on $V$ and $r$ real such that
\[
\lambda(\pi(g)v)\leq\left\Vert g\right\Vert ^{r}\mu(v),g\in G,v\in V.
\]

We denote by $\mathcal{HF}_{\operatorname{mod}}(G)$ the category of all smooth
admissible finitely generated Fr\'{e}chet $G$--modules of moderate growth with
morphisms continuous intertwiners with images that are direct summands (in the
category of Fr\'{e}chet spaces).

We now recall the definition of a parabolically induced representation. We
will give details since we will be using the construction in the last section
of this article. In the first sections we will only need the concept for $P$
and for induction from finite dimensional representations. Let $Q$ be a
standard parabolic subgroup of $G$ and let $(\sigma,W)$ be a Hilbert
representation of $Q$ on the Hilbert space $(W,(...,...))$ that is unitary as
a representation of $K\cap Q$. We define $Ind_{Q\cap K}^{K}(W)$ to be the
space of functions $f:K\rightarrow W$ such that $f(mk)=\sigma(m)f(k),m\in
K\cap Q$, $k\in K$ and such that $k\mapsto\left\Vert f(k)\right\Vert $ is in
$L^{2}(K)$. We set for $f,h\in Ind_{K\cap Q}^{K}(W)$%
\[
\left\langle f,h\right\rangle =\int_{K}(f(k),h(k))dk.
\]
thus yielding a Hilbert space structure on $Ind_{K\cap Q}^{K}(W)$. We define
$\pi(k)$ for $k\in K$ to be the right regular action that is $\pi
(k)f(x)=f(xk)$, $x,$ $k\in K$ this yields a unitary representation of $K$. If
$f\in Ind_{K\cap Q}^{K}(W)$ and $q,q^{\prime}\in Q,k,k^{\prime}\in K$ then it
is easily seen that if $qk=q^{\prime}k^{\prime}$ then $\sigma(q)f(k)=\sigma
(q^{\prime})f(k^{\prime})$. Since $G=QK$ we can define $f_{\sigma}%
(g)=\sigma(q)f(k)$ for $qk=g$ and $q\in Q$ and $k\in K$. We define the Hilbert
representation $Ind_{Q\cap K}^{G}(\sigma)$ to have underlying Hilbert space
$Ind_{K\cap Q}^{K}(W)$ and the action of $G$ given by ($\pi_{\sigma
}(g)f)(x)=f_{\sigma}(xg)$ for $g\in G$ and $x\in K$. This defines a Hilbert
representation of $G$, $Ind_{Q}^{G}(\sigma)$. If the representation, $W$, of
$Q$ is finite dimensional then the space of $C^{\infty}$ vectors of
$Ind_{Q}^{G}(\sigma)$ with respect to $G$ are the $C^{\infty}$ functions in
$Ind_{K\cap Q}^{K}(W)$ with the topology defined by the seminorms,
$p_{l}(f)=\left\Vert (I+C_{K})^{l}f\right\Vert $. Let $Q=M_{Q}A_{Q}N_{Q}$ be a
standard Langlands decomposition of $Q$ (c.f. [RRGI]). If $(\sigma,W)$ is an
admissible representation of $M_{Q},\nu\in(Lie(A_{Q}))_{\mathbb{C}}^{\ast}$
then we set $\sigma_{\nu}(man)=a^{\nu+\rho_{Q}}\sigma(m)$ for $m\in M_{Q},a\in
A_{Q},n\in N_{Q}$ here $a^{\mu}=e^{\mu(\log(a))}$ and $\rho_{Q}(h)=\frac{1}%
{2}\mathrm{tr}(adh)_{|Lie\not (N_{Q})}$. We will use the notation
$I_{Q,\sigma,\nu}$ for $Ind_{Q}^{G}(\sigma_{\nu})$, $I_{Q,\sigma,\nu}^{\infty
}$ for $Ind_{Q}^{G}(\sigma_{\nu})^{\infty}$ and $V_{Q,\sigma,\nu}$ for
$Ind_{Q}^{G}(\sigma_{\nu})_{K}^{\infty}.$

If $V$ is a topological vector space over $\mathbb{C}$ then $V^{\prime}$
denotes the continuous elements of , $V^{\ast}$, the full dual space of $V$.

\section{The theorem and some applications}

The theorem referenced as the Casselman--Wallach theorem is

\begin{theorem}
The functor $V\rightarrow V_{K}$ from $\mathcal{HF}_{\operatorname{mod}}(G)$
to $\mathcal{H}(\mathfrak{g},K)$ is an equivalence of categories.
\end{theorem}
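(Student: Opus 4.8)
The plan is to show that the functor $V \mapsto V_K$ is essentially surjective, fully faithful, and that morphisms behave well. I will organize the argument around three pillars: existence of \emph{some} moderate-growth smooth Fr\'echet completion, uniqueness (automatic continuity) of intertwiners, and construction of the \emph{minimal} completion which pins down the functor.

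\textbf{Step 1: Existence.} Given an admissible finitely generated $(\mathfrak{g},K)$--module $V$, I would first realize $V$ inside a nice Fr\'echet space. By Langlands' classification, $V$ embeds into a finite direct sum of modules of the form $V_{P,\sigma,\nu}$ with $\sigma$ (finite-dimensional and) tempered on $M_P$; since each $I_{P,\sigma,\nu}^\infty$ is a smooth Fr\'echet $G$--module of moderate growth (the induction construction in Section 2 makes the moderate-growth estimate transparent via the norm $\|g\|$), and direct sums and closed $G$--submodules of such are again of that type, one gets a candidate completion $\bar V$ with $\bar V_K = V$. The nontrivial input here is that the image of $V$ in $\bigoplus I^\infty$ is closed; this is where the asymptotic-expansion theory of generalized matrix coefficients (growth estimates along $A$) enters, showing the embedding is topologically into a direct summand.

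\textbf{Step 2: Automatic continuity / uniqueness of intertwiners.} The heart of the matter is: if $V, W \in \mathcal{HF}_{\mathrm{mod}}(G)$ and $T: V_K \to W_K$ is a $(\mathfrak{g},K)$--morphism, then $T$ extends (uniquely) to a continuous $G$--morphism $V \to W$, and moreover every closed $G$--invariant subspace of a moderate-growth module is a direct summand. I would prove this by the Casselman--Wallach dévissage: reduce via $Z(\mathfrak{g})$ having finite codimension on a generating subspace and via the subquotient/Langlands-quotient theorem to the case where $V, W$ are subquotients of principal series $I_{P,\sigma,\nu}^\infty$; then the intertwiner is realized by an integral operator whose kernel one controls using the Casimir $C$ and the seminorms $p_l(f) = \|(I+C_K)^l f\|$, yielding the continuity estimate. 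The key technical device is that on $\mathcal{HF}_{\mathrm{mod}}$ the operator $I + C$ (or a suitable polynomial in $Z(\mathfrak{g})$) acts "invertibly up to finite-dimensional error," letting one bound higher seminorms by lower ones on the $K$--finite vectors and pass to the closure.

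\textbf{Step 3: The minimal completion and conclusion.} Using Steps 1--2 I would define, for each admissible f.g. $(\mathfrak{g},K)$--module $V$, its minimal completion $\overline{V}$ as the closure of $V$ in any (hence every) such completion — the uniqueness from Step 2 shows this is well-defined and functorial, and it provides a functor $V \mapsto \overline V$ from $\mathcal{H}(\mathfrak{g},K)$ to $\mathcal{HF}_{\mathrm{mod}}(G)$ with $(\overline V)_K = V$, giving essential surjectivity. Faithfulness of $V \mapsto V_K$ is clear since $V_K$ is dense in $V$; fullness is exactly the extension statement in Step 2 applied to $\overline V$ and to an arbitrary moderate-growth completion, together with the fact that any $W \in \mathcal{HF}_{\mathrm{mod}}(G)$ with $W_K \cong V$ must equal $\overline V$ (the identity on $K$--finite vectors extends both ways to continuous $G$--maps whose composites are the identity on the dense subspace $V_K$, hence are identities). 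The requirement in the definition of $\mathcal{HF}_{\mathrm{mod}}(G)$ that morphisms have images that are direct summands is then automatically satisfied, again by Step 2.

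\textbf{Main obstacle.} The hard part is Step 2, specifically the automatic-continuity estimate: controlling the growth of a purely algebraic intertwiner so that it extends continuously. This is where one genuinely needs the heavy machinery the introduction alludes to — asymptotics of matrix coefficients, the Vogan--Wallach intertwining operator theory, and the Langlands quotient theorem — to reduce to principal series and to estimate the relevant integral kernels; handling the spherical (Langlands-quotient) case cleanly is the subtle point that distinguishes the various expositions.
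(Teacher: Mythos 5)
There is a genuine gap, and it sits exactly where you locate the difficulty: your Step 2 is not an ingredient that your sketch can supply --- together with existence it \emph{is} the theorem. The assertion that every algebraic $(\mathfrak{g},K)$--morphism $V_K\rightarrow W_K$ between arbitrary objects of $\mathcal{HF}_{\operatorname{mod}}(G)$ extends continuously (and that closed invariant subspaces are summands) is precisely the content of the Casselman--Wallach theorem once existence is known, and the mechanism you propose for it (realizing an abstract intertwiner as an integral operator with a controllable kernel, plus the claim that $I+C$ acts ``invertibly up to finite-dimensional error'' so that higher seminorms are dominated by lower ones) is not available before the theorem is proved and does not correspond to any step of the known arguments. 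The paper is structured so as never to attack that statement head on: automatic continuity is proved only for morphisms into $Ind_P^G(F)^{\infty}$ with $F$ a finite-dimensional module for the \emph{minimal} parabolic (Theorem \ref{left} and the automatic continuity theorem quoted before Corollary \ref{needed}), which yields a maximal completion $\overline{V}$ with a left-exact universal property for maps \emph{into} it; a minimal completion $\overline{\overline{V}}$ is then produced by a completely different device, namely the dual characterization $Z_{|Z_K}^{\prime}=(Z_K)_{\operatorname{mod}}^{\ast}$ of Theorem \ref{crux}, proved by introducing the auxiliary inner product $\langle v,w\rangle=\sum_{i}\int_G (v_i,\widehat{\pi}(g)w)(\widehat{\pi}(g)v,v_i)\Vert g\Vert^{-2d-d_{o}}dg$, the seminorm estimate of Lemma \ref{regularity}, and Banach's surjectivity criterion via transposes; finally the theorem is reduced to showing $\overline{V}\cong\overline{\overline{V}}$ (``goodness''), which is propagated through duals, summands, subquotients and parabolic induction to the tempered case and settled in general by the Langlands quotient theorem together with Casselman's deformation argument. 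None of this d\'evissage is present in your outline, and nothing in your Step 2 substitutes for it.

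Your Step 3 does not repair the gap. Since $V_K$ is dense in every object of $\mathcal{HF}_{\operatorname{mod}}(G)$ whose $K$--finite vectors are $V$, ``the closure of $V$ in any (hence every) such completion'' is simply that completion itself; your definition of the minimal completion is therefore well posed only if uniqueness is already known, i.e.\ only if the full strength of your Step 2 is already in hand, so it cannot be used to pin the functor down. The paper's minimal completion is characterized instead by the right-exact universal property of Theorem \ref{minimal} (surjections on $K$--finite vectors extend to topological surjections) and by the identity $Z_{|Z_K}^{\prime}=(Z_K)_{\operatorname{mod}}^{\ast}$, and the coincidence of minimal and maximal completions is the substance of the proof, not a formality following from density. (Your Step 1 is essentially sound: closing up the image of $V$ under the Casselman embedding inside $Ind_P^G(V/\mathfrak{n}^{k}V)^{\infty}$ does give an object of $\mathcal{HF}_{\operatorname{mod}}(G)$ with the correct $K$--finite vectors, which is how the paper obtains $\overline{V}$; but by itself this gives existence only, and the asymptotic-expansion and intertwining-operator machinery you invoke there is actually needed for the automatic continuity theorem, not for closedness of the image.)
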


In this section we will give some implications. We will sketch the proof in
[RRGII] in the next section. First, since there is an inverse functor we see
that if $V$ is an admissible finitely generated $(\mathfrak{g},K)$ module then
there exists a smooth Fr\'{e}chet module of moderate growth, $W$, such that
$W_{K}$ is isomorphic with $V$. This existence is a direct implication of
Casselman's theorem since Banach representations have moderate growth and
Casselman has shown that there is a Hilbert representation of $G$ $(\pi,H)$
such that the smooth $K$--finite vectors of $H$ yield a $(\mathfrak{g}%
,K)$--module isomorphic with $V$. However, our theorem says something much
stronger: $W$ is \emph{unique} up to isomorphism in the category
$\mathcal{HF}_{\operatorname{mod}}(G)$.

The above says that if $V\in\mathcal{H}(\mathfrak{g},K)$ and $(\pi,H)$ is a
Banach representation of $G$ such that $\left(  H^{\infty}\right)  _{K}$ is
isomorphic with $V$ then although there are many different representations in
the Banach category with this property there is only one object up to
isomorphism in $\mathcal{HF}_{\operatorname{mod}}(G)$ whose $K$--finite
vectors yield a module isomorphic with $V$.

The most important implication is just the expansion of what an equivalence of
categories means for the two categories in the theorem. Let $V,W$ be
admissible finitely generated $(\mathfrak{g},K)$ modules and let
$T:V\rightarrow W$ be a morphism. Let $\overline{V}$ and $\overline{W}$ be any
elements of $\mathcal{HF}_{\operatorname{mod}}(G)$ such that $\overline{V}%
_{K}$ and $\overline{W}_{K}$are respectively isomorphic with $V$ and $W$ then
the induced map $L:\overline{V}_{K}\rightarrow\overline{W}_{K}$ extends to a
continuous $G$ intertwining operator from $\overline{V}$ to $\overline{W}$
with closed image that is a direct summand of $\overline{W}$. Thus the theorem
turns analysis into algebra. We will now give several applications. We denote
by $\overline{V}$ an element of $\mathcal{HF}_{\operatorname{mod}}(G)$ with
$\overline{V}_{K}=V$. We will call $\overline{V}$ a completion of $V$ in
$\mathcal{HF}_{\operatorname{mod}}(G)$ and note that it is unique up to isomorphism.

\subsection{\label{automorphic}Continuous functionals and automorphic forms}

In this subsection we give an application to automorphic forms. We first
describe an implication of the proof in [RRGII] which characterizes
$\overline{V}_{|V}^{\prime}$ for $V\in\mathcal{H}(\mathfrak{g},K)$.

Define $\mathcal{A}_{\operatorname{mod}}(G)$ to be the space of all
$C^{\infty}$ functions $f:G\rightarrow\mathbb{C}$ satisfying the following
three conditions\smallskip

1. There exists $d>0$ and for each $x\in U(\mathfrak{g}),$ $C_{x}>0$ such that
$|xf(g)|\leq C_{x}\left\Vert g\right\Vert ^{d}$.

2. $\dim Z(\mathfrak{g})f<\infty.$

3. $f$ is right $K$--finite.

\begin{theorem}
\label{characterization}Let $V\in\mathcal{H}(\mathfrak{g},K)$ and $\lambda\in
V^{\ast}=Hom_{\mathbb{C}}(V,\mathbb{C})$ and let $\overline{V}$ be its
completion in $\mathcal{HF}_{\operatorname{mod}}(G)$ then $\lambda$ extends to
a continuous functional on $\overline{V}$ if and only if for each $v\in V$
there exists $f_{\lambda,v}\in\mathcal{A}_{\operatorname{mod}}(G)$ such that
$xf_{\lambda,v}(k)=\lambda(kxv)$ for all $k\in K$ and $x\in U(\mathfrak{g}).$
\end{theorem}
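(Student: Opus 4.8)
The plan is to prove the two directions separately, with the nontrivial content living in the "only if" direction; the "if" direction is a matter of packaging the $f_{\lambda,v}$ into a single continuous functional. First I would treat the easy implication: suppose $\lambda$ extends to $\widetilde\lambda \in \overline V{}'$. For $v \in V$ set $f_{\lambda,v}(g) = \widetilde\lambda(\pi(g)v)$, where $\pi$ is the $G$-action on $\overline V$. Since $\overline V \in \mathcal{HF}_{\operatorname{mod}}(G)$, the orbit map $g \mapsto \pi(g)v$ is $C^\infty$ into $\overline V$ and $\widetilde\lambda$ is continuous, so $f_{\lambda,v}$ is $C^\infty$ and $xf_{\lambda,v}(g) = \widetilde\lambda(\pi(g)xv)$ for $x \in U(\mathfrak g)$; in particular $xf_{\lambda,v}(k) = \widetilde\lambda(\pi(k)xv) = \widetilde\lambda(kxv) = \lambda(kxv)$ since $kxv \in V = \overline V_K$ and $\widetilde\lambda$ restricts to $\lambda$ there. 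Condition 1 of $\mathcal A_{\operatorname{mod}}(G)$ follows from moderate growth: $|xf_{\lambda,v}(g)| = |\widetilde\lambda(\pi(g)xv)| \le \lambda_0(\pi(g)xv) \le \|g\|^r \mu(xv)$ for an appropriate continuous seminorm $\lambda_0 \ge |\widetilde\lambda(\cdot)|$ and the $\mu,r$ from the moderate-growth estimate. Condition 2 holds because $V$ is $Z(\mathfrak g)$-finite (being finitely generated admissible, hence of finite length with, after passing to primary components or using the Harish-Chandra finiteness, a finite-dimensional $Z(\mathfrak g)$-span on $v$), and condition 3 because $v$ generates a finite-dimensional $K$-module in $V$ and right $K$-translation of $f_{\lambda,v}$ corresponds to $\pi(K)$-translation of $v$.

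For the converse, assume the family $\{f_{\lambda,v}\}_{v \in V}$ exists. The key point is that the assignment $v \mapsto f_{\lambda,v}$ is forced to be linear and $(\mathfrak g, K)$-equivariant: indeed $f_{\lambda,v}$ is determined by the requirement $f_{\lambda,v}(k) = \lambda(kv)$ together with the $U(\mathfrak g)$-compatibility, because a function in $\mathcal A_{\operatorname{mod}}(G)$ that is real-analytic (being $Z(\mathfrak g)$-finite, hence $C$-finite, hence real-analytic by elliptic regularity) is determined by all of its derivatives along $K$. So uniqueness of $f_{\lambda,v}$ gives linearity in $v$ and the relation $f_{\lambda,Xv} = X f_{\lambda,v}$ for $X \in \mathfrak g$ and $f_{\lambda, kv} = L_{k^{-1}} f_{\lambda,v}$ (left translation). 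Thus we obtain a $(\mathfrak g,K)$-module map $\Phi: V \to \mathcal A_{\operatorname{mod}}(G)$, $\Phi(v) = f_{\lambda,v}$, with $\lambda(v) = \Phi(v)(e)$. Now $\Phi(V)$ is a finitely generated admissible $(\mathfrak g,K)$-submodule of $\mathcal A_{\operatorname{mod}}(G)$. The crucial step is to invoke the characterization of $\overline V{}'_{|V}$ alluded to just before the theorem statement (the implication of the [RRGII] proof): the closure of $\Phi(V)$ inside $\mathcal A_{\operatorname{mod}}(G)$ — equivalently, the smooth Fréchet completion generated by $\Phi(V)$ under the seminorms $\sup_{g}\|g\|^{-d}|xf(g)|$ — is an object of $\mathcal{HF}_{\operatorname{mod}}(G)$ whose $K$-finite vectors are $\Phi(V) \cong V/\ker\Phi$, and evaluation at $e$ is a continuous functional on it. Composing $\overline V \twoheadrightarrow \overline{\Phi(V)}$ (the continuous extension, provided by Theorem 1, of the algebraic surjection $V \to \Phi(V)$) with $\mathrm{ev}_e$ yields a continuous functional on $\overline V$ restricting to $\lambda$.

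The main obstacle is the converse direction, and specifically the claim that $\mathcal A_{\operatorname{mod}}(G)$ (with its natural seminorm topology on finitely generated $Z(\mathfrak g)$-finite pieces) is itself built from objects of $\mathcal{HF}_{\operatorname{mod}}(G)$ — i.e. that a finitely generated admissible $(\mathfrak g,K)$-submodule of $\mathcal A_{\operatorname{mod}}(G)$ sits inside its own minimal smooth moderate-growth completion with evaluation at $e$ continuous. This is exactly where the construction of the minimal completion from Section 4 (and the automorphic-forms discussion this subsection is introducing) does the work: one shows the "universal" completion of $V$ can be realized inside $C^\infty(G)$ via $v \mapsto (g \mapsto \mu(gv))$ for continuous functionals $\mu$, and $\mathcal A_{\operatorname{mod}}(G)$ is precisely the target of such maps. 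I would structure the proof so that this realization is quoted from the minimal-completion construction rather than reproved. The other technical point to be careful about is the real-analyticity/unique-continuation argument that pins down $f_{\lambda,v}$ from its $K$-derivatives — this needs $Z(\mathfrak g)$-finiteness to give ellipticity, and one should remark that $f_{\lambda,v}$ lies in a single $Z(\mathfrak g)$-finite space because $V$ is (so the infinitesimal character data is uniform over $v$).
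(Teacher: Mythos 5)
Your ``only if'' direction is correct and is exactly the paper's one--line observation at the start of subsection \ref{proofmin}: for $\widetilde{\lambda}\in\overline{V}^{\prime}$ and $v\in V$ the matrix coefficient $g\mapsto\widetilde{\lambda}(\pi(g)v)$ lies in $\mathcal{A}_{\operatorname{mod}}(G)$ by moderate growth, $Z(\mathfrak{g})$--finiteness (admissibility and finite generation of $V$) and $K$--finiteness of $v$. The preliminary step of your converse (uniqueness of $f_{\lambda,v}$, hence a $(\mathfrak{g},K)$--map $\Phi:V\rightarrow\mathcal{A}_{\operatorname{mod}}(G)$ with $\lambda=\mathrm{ev}_{e}\circ\Phi$) is also fine, up to small points: real--analyticity needs $K$--finiteness together with $Z(\mathfrak{g})$--finiteness (ellipticity of $C-2C_{K}$, not of $C$ alone), and the equivariance is for the right regular action, $f_{\lambda,kv}=R(k)f_{\lambda,v}$.

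The genuine gap is your ``crucial step.'' The claim that the closure of $\Phi(V)$ inside $\mathcal{A}_{\operatorname{mod}}(G)$, for the seminorms $\sup_{g}\left\Vert g\right\Vert ^{-d}|xf(g)|$, is an object of $\mathcal{HF}_{\operatorname{mod}}(G)$ with $K$--finite vectors $\Phi(V)$ and with $\mathrm{ev}_{e}$ continuous is asserted, not proved, and it is essentially the theorem itself: applied to the functional $\delta=\mathrm{ev}_{e}$ on the submodule $\Phi(V)\subset\mathcal{A}_{\operatorname{mod}}(G)$ (for which $f_{\delta,w}=w$), it is precisely the statement that $\delta$ extends continuously to a completion --- and the paper obtains that as an \emph{application} of Theorem \ref{characterization} in the automorphic--forms discussion, not as an input. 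Nothing in the paper (or in any quotable form of the minimal--completion construction) gives your closure statement; continuity and smoothness of right translation on a sup--norm closure, and above all admissibility of its $K$--finite vectors, are exactly the kind of facts the hard analysis is needed for. What the minimal completion actually provides is Theorem \ref{crux}: given $\lambda\in V_{\operatorname{mod}}^{\ast}$ one introduces the $K$--invariant inner product $\left\langle v,w\right\rangle =\sum_{i}\int_{G}(v_{i},\widehat{\pi}(g)w)(\widehat{\pi}(g)v,v_{i})\left\Vert g\right\Vert ^{-2d-d_{o}}dg$ built from the growth bounds on $\widehat{\pi}$ and on the $f_{\lambda,v_{i}}$, proves $\sum_{\gamma}\left\Vert \lambda_{\gamma}\right\Vert _{1}^{2}<\infty$ via Schur orthogonality, and uses Lemma \ref{regularity} (the topology of $H_{1}^{\infty}$ is given by $\left\Vert (1+C_{K})^{l}\cdot\right\Vert _{1}$) to produce $l$ and $u\in H$ with $\lambda(v)=((I+C_{K})^{l}v,u)$; this gives continuity on $\overline{\overline{V}}$, and goodness ($\overline{V}\cong\overline{\overline{V}}$, i.e.\ the CW theorem) transfers it to $\overline{V}$. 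If you permit yourself to quote Theorem \ref{crux}, your detour through $\Phi$ and closures in $\mathcal{A}_{\operatorname{mod}}(G)$ is unnecessary --- apply it to $\lambda$ directly; if you do not, none of the analytic work in the converse has actually been done.
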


We will come back to this theorem in the next section.

We recall that $f\in C^{\infty}(G)$ is an automorphic form on $G$ with respect
to a discrete subgroup of finite covolume, $\Gamma$, if $f\in$ $\mathcal{A}%
_{\operatorname{mod}}(G)$ and $f(\gamma g)=f(g)$ for $\gamma\in\Gamma$. We
denote the space of automorphic forms on $\Gamma\backslash G$ by
$\mathcal{A}_{\operatorname{mod}}(\Gamma\backslash G)$.

We now apply the theorem to automorphic forms. Let for $f\in\mathcal{A}%
_{\operatorname{mod}}(\Gamma\backslash G)$ the space $W$ be the span of the
right translates of $f$ under $K$ and let $V=U(\mathfrak{g})W$. Then
$V\in\mathcal{H}(\mathfrak{g},K)$. Let $\delta(h)=h(e)$ ($e$ the identity
element of $G$). We note that if $x\in U(\mathfrak{g})$ and $k\in K$ then
$\delta(kxf)=xf(k).$  Theorem implies 2 that $\delta$ extends to $\overline
{V}$. This proves that $f(g)=\lambda(gv)$ for $v\in V$ and%
\[
\lambda\in(\overline{V}^{\prime})^{\Gamma}=\{\mu\in\overline{V}^{\prime}%
|\mu\circ\gamma=\mu,\gamma\in\Gamma\}\text{.}%
\]

For example, this implies that the analytically continued Eisenstein series
initially shown to exist for $K$--finite elements of an induced
representations from cuspidal parabolic subgroups extend to the $C^{\infty}$
vectors of these representations.  At the end of the paper we will look at the
implications to the meromorphic continuation of $C^{\infty}$--Eisenstein series.

\subsection{$C^{\infty}$ Helgason conjecture and related results}

Let $G$ be connected with compact center and let $P=MAN$ be a standard minimal
parabolic subgroup with given Langlands decomposition. Here $M\subset K$ is
compact, $A$ is the identity component of a maximal split torus over
$\mathbb{R}$ and $N$ is the unipotent radical. We say that an irreducible
representation of $K$, $(\tau,V_{\tau})$, is small if $\tau_{|M}$ is
irreducible. Clearly a one dimensional representation is small (e.g. the
trivial representation). There are more interesting representations that are
small. For example the spin representation of $Spin(2n+1)$ (thought of as $K$
for for the two--fold cover of $SL(2n+1,\mathbb{R})$) or either half spin
representation of $Spin(2n)$ (thought of as $K$ for the two fold cover of
$SL(2n,\mathbb{R})$).

We first describe a general Poisson \textquotedblleft integral
representation\textquotedblright\ for elements of $\mathcal{A}%
_{\operatorname{mod}}(G)$. If $(\sigma,H)$ is a finite dimensional
representation of $P$ we set $I_{P,\sigma}^{\infty}$ equal to the
representation $C^{\infty}$--induced of $G$ from $\sigma$ on $P$. That is the
Fr\'{e}chet space of $C^{\infty}$ vectors of $Ind_{P}^{G}(\sigma)$ as in
section 2. 

\begin{theorem}
Let $f\in\mathcal{A}_{\operatorname{mod}}(G)$ then there exists a finite
dimensional representation, $\sigma$, of $P$, a continuous linear functional
$\lambda\in\left(  I_{P,\sigma}^{\infty}\right)  ^{\prime}$ and $h\in\left(
I_{P,\sigma}^{\infty}\right)  _{K}$ such that $f(g)=\lambda(\pi_{P,\sigma
}(g)h)=f(g)$ for all $g\in G$.
\end{theorem}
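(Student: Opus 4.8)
The plan is to factor the desired representation of $f$ through its associated Harish--Chandra module and then transport it, by means of the Casselman--Wallach theorem, into a $C^{\infty}$--induced representation of $G$. The proof thus breaks into three parts: (i) realize $f$ by a continuous functional on the completion $\overline{V}$ of a suitable $V\in\mathcal{H}(\mathfrak{g},K)$; (ii) embed that $V$ as a $(\mathfrak{g},K)$--module into some $(I_{P,\sigma}^{\infty})_{K}$; and (iii) glue (i) and (ii) together using that morphisms in $\mathcal{HF}_{\operatorname{mod}}(G)$ are automatically continuous with closed image.

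For part (i) I would repeat the construction of subsection~\ref{automorphic} verbatim, simply ignoring the arithmetic group $\Gamma$ there (whose only role was to produce the invariance of the functional). Let $W$ be the finite dimensional span of the right $K$--translates of $f$ and put $V=U(\mathfrak{g})W\in\mathcal{H}(\mathfrak{g},K)$. One checks that $V\subseteq\mathcal{A}_{\operatorname{mod}}(G)$: conditions 1--3 in the definition of $\mathcal{A}_{\operatorname{mod}}(G)$ are preserved by right translations and by applying elements of $U(\mathfrak{g})$, the point about condition 2 being that $Z(\mathfrak{g})$ is central. Taking $\lambda=\delta$ (evaluation at $e$) and $f_{\delta,v}=v$ in Theorem~\ref{characterization} — so that $xf_{\delta,v}(k)=(xv)(k)=\delta(k\,xv)$ when $K$ and $\mathfrak{g}$ act by right translation — Theorem~\ref{characterization} shows that $\delta$ extends to a continuous functional $\mu$ on $\overline{V}$, and, exactly as in subsection~\ref{automorphic}, $f(g)=\mu(\pi_{\overline{V}}(g)f)$ with $f\in V=\overline{V}_{K}$.

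For part (ii) I would invoke the general form of Casselman's subrepresentation theorem: \emph{every} $V\in\mathcal{H}(\mathfrak{g},K)$ embeds as a $(\mathfrak{g},K)$--module into $(I_{P,\sigma}^{\infty})_{K}$ for a suitable \emph{finite dimensional} representation $\sigma$ of $P=MAN$ (not necessarily irreducible, with $A$ allowed to act non-semisimply). This is the substantial classical input, for which I would cite [RRGII]; the mechanism is the $\overline{\mathfrak{n}}$--adic filtration of $V$. By Casselman's finiteness theorem each $V/\overline{\mathfrak{n}}^{k}V$ is a finite dimensional $\overline{\mathfrak{p}}$--module, the canonical map of $V$ into $\varprojlim V/\overline{\mathfrak{n}}^{k}V$ is injective, and since $\operatorname{soc}(V)$ has finite length one may choose $k$ with $\operatorname{soc}(V)\cap\overline{\mathfrak{n}}^{k}V=0$. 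Frobenius reciprocity then turns the projection $V\to V/\overline{\mathfrak{n}}^{k}V$ (suitably $\rho$--twisted) into a $(\mathfrak{g},K)$--map $V\to(I_{\overline{P},\sigma_{0}}^{\infty})_{K}$ which is injective because it is injective on $\operatorname{soc}(V)$, and conjugating by a representative of the long Weyl group element replaces $\overline{P}$ by $P$. This is the step I expect to be the main obstacle; everything else is formal once it is in hand.

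Finally, part (iii). First note that $I_{P,\sigma}^{\infty}\in\mathcal{HF}_{\operatorname{mod}}(G)$ — since $\sigma$ is finite dimensional, $I_{P,\sigma}^{\infty}$ is a smooth, admissible, finitely generated Fr\'{e}chet module of moderate growth — so $I_{P,\sigma}^{\infty}$ is its own completion. By the Casselman--Wallach theorem the embedding $j:V\hookrightarrow (I_{P,\sigma}^{\infty})_{K}$ extends to a continuous $G$--intertwiner $\overline{j}:\overline{V}\to I_{P,\sigma}^{\infty}$ with closed image that is a direct summand; $\overline{j}$ is injective because $\ker\overline{j}$ is a closed $G$--submodule of $\overline{V}$ whose space of $K$--finite vectors is $\ker j=0$, hence (as $K$--finite vectors are dense) is $0$, so by the open mapping theorem $\overline{j}$ is a topological isomorphism of $\overline{V}$ onto the closed subspace $\overline{j}(\overline{V})$. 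The functional $\mu\circ\overline{j}^{-1}$ is then continuous on $\overline{j}(\overline{V})$, so by Hahn--Banach it extends to $\lambda\in(I_{P,\sigma}^{\infty})^{\prime}$. Put $h=j(f)\in (I_{P,\sigma}^{\infty})_{K}$. Since $\overline{j}$ is $G$--equivariant, $\pi_{P,\sigma}(g)h=\overline{j}(\pi_{\overline{V}}(g)f)\in\overline{j}(\overline{V})$ for all $g$, whence
\[
\lambda(\pi_{P,\sigma}(g)h)=\mu\bigl(\overline{j}^{-1}\overline{j}(\pi_{\overline{V}}(g)f)\bigr)=\mu(\pi_{\overline{V}}(g)f)=f(g),
\]
which is the assertion. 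Thus part (i) is a direct application of Theorem~\ref{characterization}, part (iii) is soft functional analysis together with the functoriality packaged into the Casselman--Wallach theorem, and part (ii) carries the real weight.
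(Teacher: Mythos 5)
Your argument is correct, and it is essentially the argument the paper has in mind (the paper itself gives no in-text proof, deferring to [RRGII, Theorem 11.9.2]): generate $V=U(\mathfrak{g})\,\mathrm{span}(R_Kf)\in\mathcal{H}(\mathfrak{g},K)$, extend $\delta$ to $\overline{V}$ via Theorem \ref{characterization} exactly as in subsection \ref{automorphic}, realize $\overline{V}$ inside an induced module $I_{P,\sigma}^{\infty}$ with $\sigma$ finite dimensional, and finish with Hahn--Banach, setting $h$ equal to the image of $f$. The only real difference is in how the embedding is obtained. You invoke Casselman's subrepresentation theorem (via the $\overline{\mathfrak{n}}$-adic filtration and a socle argument, then conjugating $\overline{P}$ to $P$) to get $j:V\hookrightarrow(I_{P,\sigma}^{\infty})_K$, and then use the full CW equivalence of categories to extend $j$ to a closed embedding $\overline{j}:\overline{V}\to I_{P,\sigma}^{\infty}$. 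In the paper's setup this detour is unnecessary: by construction (subsection \ref{leftsection}) $\overline{V}$ \emph{is} the closure of $T_{k_o}(V)$ in $Ind_P^G(V/\mathfrak{n}^{k_o}V)^{\infty}$, i.e.\ it already sits as a closed $G$-invariant subspace of an $I_{P,\sigma}^{\infty}$ with $\sigma=V/\mathfrak{n}^{k_o}V$, so only the automatic continuity theorem (Theorem 8/Corollary \ref{needed}), not the full equivalence, is needed; your route buys nothing extra but is legitimate, since Theorem 3 appears in the applications section after Theorems 1 and 2 and no circularity arises. Two small points worth making explicit if you write this up: (a) admissibility of $V$ rests on Harish--Chandra's theorem that a $K$-finite, $Z(\mathfrak{g})$-finite, finitely generated module is admissible (the paper also uses this silently), and (b) the identity $f(g)=\mu(\pi_{\overline{V}}(g)f)$ for \emph{all} $g$, not just for derivatives along $K$ at $e$, needs the standard real-analyticity argument ($f$ and the generalized matrix coefficient are both analytic and agree to infinite order on $K$); the paper glosses over this in subsection \ref{automorphic} as well.
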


For a proof see [RRGII,Theorem 11.9.2]

Now suppose that $(\tau,H_{\tau})$ is a small $K$--type. Let $\sigma=\tau
_{|M}$. Let $I_{P,\sigma.\nu}^{\infty}\ $and $V_{P,\sigma.\nu}\ $be as in
Section 2. Then Frobenius reciprocity implies that $\dim Hom_{K}(H_{\tau
},I_{P,\sigma.\nu}^{\infty})=1$. This implies there is a homomorphism
$\gamma_{\tau,\nu}:U(\mathfrak{g})^{K}\rightarrow\mathbb{C}$ such that
$\pi_{P.\sigma,\nu}(x)f=\gamma_{\tau,\nu}(x)f$ for all $f$ in the $\tau$
isotypic component of $I_{P,\sigma.\nu}^{\infty}$and all $x\in U(\mathfrak{g}%
)^{K}$. One checks easily that $\gamma_{\tau,\nu}(x)$ is polynomial in $\nu$.
Furthermore since $\sigma=\tau_{|M}$ we see that if $k\in N_{K}(M)$
(normalizer of $M$ in $K$) the representation $\sigma^{k}(m)=\sigma
(k^{-1}mk)=\tau(k)^{-1}\sigma(m)\tau(k)$ this implies that $\gamma_{\tau,\nu
}=\gamma_{\tau,s\nu}$ for $s\in W(A)=N_{K}(A)/M$. The analogue in this context
of the $C^{\infty}$--Helgason conjecture is

\begin{theorem}
Let $f\in\mathcal{A}_{\operatorname{mod}}(G)(\tau)$ be such that
$xf=\gamma_{\tau,\nu}(x)f$ for $x\in U(\mathfrak{g})^{K}$ then if
$V_{P,\sigma.\nu}=U(\mathfrak{g})V_{P,\sigma.\nu}(\tau)$ there exists
$\lambda\in(I_{P,\sigma.\nu}^{\infty})^{\prime}$ and $u\in I_{P,\sigma.\nu
}(\tau)$ such that $f(g)=\lambda(\pi_{P.\sigma,\nu}(g)u)$ for $g\in G$.
\end{theorem}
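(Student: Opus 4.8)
The plan is to deduce this from the previous theorem (the general Poisson integral representation, [RRGII, Theorem 11.9.2]) together with the uniqueness half of the Casselman--Wallach theorem and an infinitesimal character argument. First I would apply the preceding theorem to $f \in \mathcal{A}_{\operatorname{mod}}(G)(\tau)$ to obtain a finite dimensional representation $\sigma'$ of $P$, a functional $\lambda' \in (I_{P,\sigma'}^{\infty})'$ and a $K$-finite vector $h' \in (I_{P,\sigma'}^{\infty})_K$ with $f(g) = \lambda'(\pi_{P,\sigma'}(g)h')$. Since $f$ is in the $\tau$-isotypic component and the matrix coefficient map $h' \mapsto (g \mapsto \lambda'(\pi(g)h'))$ is $K$-equivariant for the left action on functions, I may replace $h'$ by its projection onto the $\tau$-isotypic component of $(I_{P,\sigma'}^{\infty})_K$ and $\lambda'$ by an appropriate component, so that $h'$ generates a $(\mathfrak{g},K)$-submodule $V' \subset (I_{P,\sigma'}^{\infty})_K$ with $V' = U(\mathfrak{g})V'(\tau)$.

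Next I would use the hypothesis $xf = \gamma_{\tau,\nu}(x)f$ for $x \in U(\mathfrak{g})^K$ to pin down the infinitesimal character, hence the relevant subquotient. Because $f$ is a matrix coefficient of $V'$ and $V' = U(\mathfrak{g})V'(\tau)$ with $\dim V'(\tau)$ governed by $\operatorname{Hom}_M(\tau_{|M}, \sigma'_{|M})$, the action of $Z(\mathfrak{g}) \subset U(\mathfrak{g})^K$ on the cyclic vector forces the $Z(\mathfrak{g})$-eigenvalues to match those attached to $\gamma_{\tau,\nu}$; combined with smallness of $\tau$ this identifies the constituent of $V'$ generated by the $\tau$-vectors, up to the Weyl group ambiguity $\gamma_{\tau,\nu} = \gamma_{\tau,s\nu}$, with a subquotient isomorphic (as a $(\mathfrak{g},K)$-module) to the submodule $V_{P,\sigma,\nu} = U(\mathfrak{g})V_{P,\sigma,\nu}(\tau)$ of $I_{P,\sigma,\nu}^{\infty}$, where $\sigma = \tau_{|M}$. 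Here I would invoke the key rigidity input: a $(\mathfrak{g},K)$-module generated by a small $K$-type with fixed $\gamma_{\tau,\nu}$ is, in the appropriate sense, determined — this is exactly the point where the classical Helgason-type uniqueness enters.

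I would then transport the matrix coefficient across this isomorphism: by the Casselman--Wallach theorem the $(\mathfrak{g},K)$-isomorphism $V' \xrightarrow{\sim} V_{P,\sigma,\nu}$ extends to a continuous $G$-map of the smooth Fréchet completions, and composing $\lambda'$ with the inverse of this extension produces $\lambda \in (I_{P,\sigma,\nu}^{\infty})'$; taking $u$ to be the image of the cyclic vector in $I_{P,\sigma,\nu}(\tau)$ gives $f(g) = \lambda(\pi_{P,\sigma,\nu}(g)u)$ for all $g \in G$, since two real-analytic functions on $G$ agreeing with all their derivatives at $e$ (equivalently, coming from matching $(\mathfrak{g},K)$-data) coincide. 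The main obstacle I anticipate is the second step: controlling precisely which $(\mathfrak{g},K)$-module is generated by the $\tau$-vectors in $V'$ and matching it with $V_{P,\sigma,\nu}$. One must rule out that the generated module is a proper quotient or a different constituent with the same infinitesimal character, and handle the multiplicity and the $W(A)$-ambiguity carefully; the smallness hypothesis on $\tau$ (so that $\sigma = \tau_{|M}$ is irreducible and the $\tau$-isotypic component of $I_{P,\sigma,\nu}^{\infty}$ is one dimensional by Frobenius reciprocity) is what makes this bookkeeping tractable, but verifying that the embedding $V_{P,\sigma,\nu} \hookrightarrow I_{P,\sigma,\nu}^{\infty}$ is the right target — rather than some other completion — requires the full strength of the uniqueness in Theorem 1.
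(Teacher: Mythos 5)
Your overall strategy---start from the general Poisson representation (Theorem 3) and transport the functional across an identification of $(\mathfrak{g},K)$--modules, using the CW theorem to upgrade an algebraic morphism to a continuous map of completions---is reasonable in outline (the paper itself states this theorem without proof, the intended ingredients being Theorem \ref{characterization} and the machinery of [RRGII, 11.9]). But the central step of your argument is a genuine gap: the asserted rigidity, that a $(\mathfrak{g},K)$--module generated by its $\tau$--isotypic vectors on which $U(\mathfrak{g})^{K}$ acts by $\gamma_{\tau,\nu}$ is ``determined,'' is false. All such modules are quotients of one universal module built from $(H_{\tau},\gamma_{\tau,\nu})$, namely $U(\mathfrak{g})\otimes_{U(Lie(K)\otimes\mathbb{C})U(\mathfrak{g})^{K}}\left(  H_{\tau}\otimes\mathbb{C}_{\gamma_{\tau,\nu}}\right)$, and in general there are many non-isomorphic ones with the same data (already for trivial $\tau$: a cyclic but reducible spherical principal series and its spherical irreducible quotient). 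So matching infinitesimal characters, or even the full $U(\mathfrak{g})^{K}$--character on the $\tau$--type, cannot identify the module generated by your projected vector $h^{\prime}$ with $V_{P,\sigma,\nu}$; and Theorem 1 cannot help, since it only extends $(\mathfrak{g},K)$--morphisms that already exist---it produces no algebraic morphism. Note also that from $f(g)=\lambda^{\prime}(\pi(g)h^{\prime})$ and $xf=\gamma_{\tau,\nu}(x)f$ you only learn that $(\pi(x)-\gamma_{\tau,\nu}(x))h^{\prime}$ is invisible to matrix coefficients against $\lambda^{\prime}$, i.e.\ lies in a submodule; so even the eigenvector property of $h^{\prime}$ forces a passage to a quotient, which further obstructs the claimed isomorphism with the \emph{sub}module $V_{P,\sigma,\nu}\subset\left(  I_{P,\sigma,\nu}^{\infty}\right)  _{K}$ and your use of an inverse map.

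What the argument actually needs is weaker than an isomorphism but is precisely the hard content: a $(\mathfrak{g},K)$--morphism $\Psi$ from $V_{P,\sigma,\nu}$ onto $W_{f}$, the $(\mathfrak{g},K)$--module generated by $f$ under the right regular action, with $\Psi(u)=f$ for some $u\in V_{P,\sigma,\nu}(\tau)$. Given $\Psi$, one sets $\lambda=\delta\circ\overline{\Psi}$, where $\delta(h)=h(e)$ extends continuously to $\overline{W_{f}}$ by Theorem \ref{characterization} (exactly as in subsection \ref{automorphic}) and $\overline{\Psi}$ is the continuous extension furnished by the CW theorem; analyticity then gives $f(g)=\lambda(\pi_{P,\sigma,\nu}(g)u)$. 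The hypothesis $V_{P,\sigma,\nu}=U(\mathfrak{g})V_{P,\sigma,\nu}(\tau)$ does not produce $\Psi$ by the bookkeeping you describe: it says $V_{P,\sigma,\nu}$ is \emph{a} cyclic quotient of the universal module, not that it dominates every other module with the same $(\tau,\gamma_{\tau,\nu})$--data. Producing $\Psi$ is exactly the Helgason-type surjectivity being proved, so ``classical Helgason-type uniqueness'' cannot be invoked as an input; at this point one needs genuine analytic information about $f$ (its asymptotic expansion along $A$, i.e.\ control of $W_{f}/\mathfrak{n}W_{f}$, as in [RRGII, 11.9]), and your proposal leaves this essential step unproved.
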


We note that if $\tau$ is the trivial representation then Kostant [K] has
shown that the condition of the theorem is satisfied for $\operatorname{Re}%
\nu$ in the closed positive Weyl chamber. So Theorem 4 implies the
distribution version of the Helgason Conjecture due to [OS]. We note that
using the fact that the analytic vectors of $I_{P,\sigma.\nu}^{\infty}$ are
the real analytic elements of $I_{P,\sigma.\nu}$ we see that the
distributional version of the conjecture implies the hyperfunction version
that is the same assertion (the original Helgason conjecture) but with $f\in
C^{\infty}(G)$ satisfying only the $K$--condition and $xf=\gamma_{\tau,\nu
}(x)f$ for $x\in U(\mathfrak{g})^{K}$ and $\lambda$ in the continuous dual of
the analytically induced representation.

In his work on split groups over $\mathbb{R}$, Seung Lee [Le] has shown that
if $\tau$ is small, $\dim V_{\tau}>1$ and $G$ is split over $\mathbb{R}$ and
simply laced then the analogue of Kostant's result is true and so the above
theorem completely describes $\mathcal{A}_{\operatorname{mod}}(G)(\tau)$, In
the non-simply laced case there are some exceptions see [Le] for a complete discussion.

\section{The main aspects of our proof}

We divide the proof into the construction of a left exact functor (maximal
globalization) and a right exact functor (minimal globalization) from
$\mathcal{H}(\mathfrak{g},K)$ to $\mathcal{H}\mathcal{F}_{\operatorname{mod}%
}(G)$ and then prove that the two functors are equivalent. The theorems
involved in the proof of the left exact functor will only be sketched. This is
the most complicated part of our proof. It is the argument proving the
existence of the right exact functor that will be given in detail since it
plays a role in the extension of the theorem in the last section of this paper
and also since the original version in [RRGII] was rather convoluted and had
some misprints in unfortunate places.

\subsection{\label{leftsection}Step one: A left exact functor}

The first step is to construct what will be an inverse functor to the
$K$--finite functor. Let $P=MAN$ be a minimal parabolic subgroup of $G$ (as in
section 2). Let $\mathfrak{n}=Lie(N)$ and let $V\in\mathcal{H}(\mathfrak{g}%
,K)$. We first sketch the proof of

\begin{theorem}
\label{left}Let $V\in\mathcal{H}(\mathfrak{g},K)$. Then there exists an object
$\overline{V}\in\mathcal{HF}_{\operatorname{mod}}(G)$ such that

1. $\overline{V}_{K}$ is isomorphic with $V$.

2. If $W\in\mathcal{H}(\mathfrak{g},K)$ and $T:W\rightarrow V$ is a
$(\mathfrak{g},K)$--homomorphism and if $X\in\mathcal{H}\mathcal{F}%
_{\operatorname{mod}}(G)$ is such that $X_{K}$ is isomorphic with $W$ then
there exists a $\mathcal{HF}_{\operatorname{mod}}(G)$--morphism
$S:X\rightarrow\overline{V}$ in such that $S(X_{K})=T(W)$.
\end{theorem}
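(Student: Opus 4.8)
The plan is to realize $\overline V$ as a closed $G$-submodule of a space of functions on $G$, built from continuous functionals on an inducing module, so that the universal (left-exact) property in part 2 follows from the definition rather than from an additional argument. Concretely, for $V\in\mathcal H(\mathfrak g,K)$ I would first embed the contragredient $V'$ (the $K$-finite dual $(\mathfrak g,K)$-module) into a principal series $V_{P,\sigma,\nu}$ for a suitable finite-dimensional $\sigma$ of $M$ and $\nu\in(\mathrm{Lie}(A))_{\mathbb C}^*$; this is the standard subrepresentation theorem of Casselman, which is available since $V'$ is again in $\mathcal H(\mathfrak g,K)$. Dually this exhibits $V$ as a quotient of the corresponding $V_{P,\sigma^*,-\nu}$. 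The candidate for $\overline V$ is then the image of the analogous map on the smooth level, or better: the space of $v\in V$ paired against all of $V'$ by producing, for each $\mu\in V'$, the generalized matrix coefficient $g\mapsto \langle \mu, g v\rangle$, and topologizing $\overline V$ by the seminorms coming from $\mathcal A_{\operatorname{mod}}(G)$. One checks, using the asymptotic expansions of generalized matrix coefficients of admissible representations and the moderate-growth estimates recalled in section 2, that each such matrix coefficient lies in $\mathcal A_{\operatorname{mod}}(G)$, that the resulting map $V\to \overline V$ is injective (separation of points by $V'$), and that $\overline V$ is a smooth Fréchet $G$-module of moderate growth with $\overline V_K\cong V$ — this is part 1.

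The verification that $\overline V\in\mathcal{HF}_{\operatorname{mod}}(G)$ — in particular finite generation and admissibility of $\overline V_K$, and that the topology is Fréchet and complete — I would get by transporting structure from the inducing principal series: $I^\infty_{P,\sigma,\nu}$ is manifestly a smooth Fréchet module of moderate growth (its $C^\infty$ vectors with the seminorms $p_l(f)=\|(I+C_K)^l f\|$), and $\overline V$ sits inside a finite direct sum of duals of such modules as a closed $G$-invariant subspace, hence inherits all the needed properties; admissibility of $\overline V_K$ is forced because $\overline V_K\cong V$ is admissible by hypothesis. The intertwining-operator theory of Vogan–Wallach ([VW]) enters to control the image and to ensure the completion so obtained is independent of the chosen embedding up to isomorphism, and the Langlands quotient theorem is the tool that lets one run this for a general $V$ by reducing to the irreducible case and then to filtrations.

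For part 2, the universal property, the point is that the construction of $\overline V$ from $V$ is functorial in the right variance: given $T:W\to V$ in $\mathcal H(\mathfrak g,K)$, the transpose $T':V'\to W'$ induces, by composition, a $G$-equivariant map on matrix coefficients which is exactly a continuous map $\overline W\to\overline V$ extending $T$ (I construct $\overline W$ by the same recipe). If instead one is handed an abstract $X\in\mathcal{HF}_{\operatorname{mod}}(G)$ with $X_K\cong W$, one first produces a morphism $X\to\overline W$: every continuous functional on $X$ restricted to $X_K\cong W$ gives, via $w\mapsto (g\mapsto \langle\mu,gw\rangle)$, an element of $\mathcal A_{\operatorname{mod}}(G)$, using moderate growth of $X$; this realizes $X$ inside the space of such matrix coefficients, and because $\overline W$ is by construction the \emph{maximal} such realization (it uses \emph{all} of $W'$), the map $X\to\overline W$ exists and is continuous. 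Composing with $\overline W\to\overline V$ and noting that on $K$-finite vectors it is $T$ up to the identifications gives $S:X\to\overline V$ with $S(X_K)=T(W)$.

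The main obstacle, and the part I would expect to occupy most of the work, is proving that the matrix-coefficient realization really produces a \emph{Fréchet} topology that is complete and for which $G$ acts smoothly of moderate growth with the correct $K$-finite vectors — i.e. that nothing is lost or gained in passing from the algebraic $V$ to $\overline V$. This is precisely where the asymptotic-expansion machinery for generalized matrix coefficients is indispensable: one needs the expansions along $A$ to bound arbitrary $U(\mathfrak g)$-derivatives of $f_{\mu,v}$ by $\|g\|^d$ uniformly, to identify the closure of the $K$-finite vectors with all of $\overline V$, and to see that the seminorms from $\mathcal A_{\operatorname{mod}}(G)$ are equivalent to finitely many of them (so the space is Fréchet, not just locally convex). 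Handling a reducible $V$ — reducing via the Langlands classification to standard modules and then to principal series where the estimates are explicit — is the second source of technical friction, and is the reason the theorem is only \emph{sketched} in this paper with the details deferred to [RRGII].
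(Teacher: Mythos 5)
Your plan founders at the point you declare to be free: the universal property in part 2 does not ``follow from the definition'' of the matrix--coefficient realization. To produce a continuous morphism $S:X\rightarrow\overline{W}$ extending the identity on $X_{K}\cong W$, you must verify, for each $K$--finite functional $\mu\in W^{\prime}$, each $x\in U(\mathfrak{g})$ and the fixed growth exponent $d$, an estimate of the form $\sup_{g\in G}\left\Vert g\right\Vert ^{-d}\left\vert \mu(\pi(g)xv)\right\vert \leq q(v)$ for all $v\in W=X_{K}$, with $q$ a continuous seminorm on $X$; only then does density of $X_{K}$ in $X$ let you extend. This is precisely the statement that every $\mu\in W^{\prime}$ extends to a continuous functional of moderate growth on $X$, and nothing about $\overline{W}$ ``using all of $W^{\prime}$'' provides it: a priori the restriction of $X^{\prime}$ to $X_{K}$ need not contain $W^{\prime}$ at all, and maximality of the target cannot manufacture continuity on the source. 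In the paper this is exactly the content of the automatic continuity theorem ([RRGII, Theorem 11.4.1], quoted as the theorem that any $(\mathfrak{g},K)$--morphism $X_{K}\rightarrow Ind_{P}^{G}(F)_{K}^{\infty}$ extends to $X\rightarrow Ind_{P}^{G}(F)^{\infty}$), and its proof is where the asymptotic expansions, the small $K$--type lemma and the intertwining--operator results of [VW] are actually spent. So the step you treat as formal is the analytic heart of Theorem \ref{left}; your appeal to maximality is circular.

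Separately, your construction is not the paper's and part of it points at the wrong completion. The paper does not pass through Casselman's subrepresentation theorem for $V^{\prime}$ and dualization, and it does not take an image of a quotient map from a smooth principal series (an image/quotient realization is the natural candidate for the \emph{minimal} completion $\overline{\overline{V}}$, whose universal property concerns surjections, not maps into it). Instead it uses the canonical maps $T_{k}:V\rightarrow Ind_{P}^{G}(V/\mathfrak{n}^{k}V)_{K}^{\infty}$, injective for $k\geq k_{o}$ by Casselman's embedding theorem plus Artin--Rees, and defines $\overline{V}$ as the closure of $T_{k_{o}}(V)$ inside the smooth induced module $Ind_{P}^{G}(V/\mathfrak{n}^{k_{o}}V)^{\infty}$; Fr\'{e}chet completeness, smoothness, moderate growth and admissibility are then inherited from the ambient induced representation rather than proved for an ad hoc matrix--coefficient topology, and both parts 1 and 2 reduce to the automatic continuity theorem (via Corollary \ref{needed}). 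Note also that the Langlands quotient theorem is not used in this step at all (it enters only in the final argument that all modules are good), so the reduction to irreducibles you propose here is not the mechanism by which the paper handles general $V$.
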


Using standard methods, one can reduce the proof to the groups $G$ that are
connected and simple with finite center. It is for these groups we will sketch
the proof . We consider the simply connected (complex) Lie group with Lie
algebra $\mathfrak{g}$, $G_{\mathbb{C}}$ and the connected subgroup,
\ $G_{\mathbb{R}}$, corresponding to $Lie(G)$. Then we may assume that $G$ is
a finite covering group of $G_{\mathbb{R}}$. Let $Z$ denote that kernel of the
covering homomorphism. By our assumption $Z$ is a finite abelian group. The
first step in the argument is to prove (see [RRGII,11.8.2)

\begin{theorem}
If $\chi\in\widehat{Z}$ then there exists $\tau$ an irreducible representation
of $K$ such that $\tau_{|M}$ is irreducible and $\tau_{|Z}=\chi I$ .
\end{theorem}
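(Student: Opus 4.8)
The plan is to reduce the existence of such a $\tau$ to a concrete construction using the classification of connected simple real Lie groups and explicit knowledge of which $K$--types are ``small'' (i.e. restrict irreducibly to $M$). First I would recall the structure of $G_{\mathbb R}$: since $G_{\mathbb C}$ is simply connected, $\widehat{Z}$ is a subgroup of the center of $G_{\mathbb C}$, and a character $\chi\in\widehat Z$ must be extended to a character (or more precisely realized in an irreducible $K$--type) of the maximal compact $\widetilde K$ of the relevant cover. So the statement is really: for each central character $\chi$ of the fundamental group direction killed by the covering, there is a small $K$--type $\tau$ with $\tau|_Z=\chi I$. The key observation, which I would isolate as a lemma, is that smallness of $\tau$ (irreducibility of $\tau|_M$) is detected by a single combinatorial condition on the highest weight relative to the restricted root system, because $M$ is (up to components) the centralizer of $A$ in $K$ and $\mathfrak m = \mathfrak k \cap (\text{zero restricted root space})$.

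Next I would go through the classification. For the split real forms the relevant small $K$--types for the nontrivial covers are precisely the (half-)spin representations: for the two-fold cover of $SL(n,\mathbb R)$ one uses that $K$ is $Spin(n)$ (or a quotient), $M$ is a finite group containing $Spin(n-1)$-type data, and the spin or half-spin representation of $Spin(n)$ restricts irreducibly to $Spin(n-1)$ — this is the example already flagged in the text. The trivial representation always handles $\chi$ trivial on $Z$. So I would organize the proof as: (a) handle $\chi$ trivial by $\tau$ trivial; (b) for $\chi$ nontrivial, list the simple groups with nontrivial finite $\pi_1$ (equivalently with $Z\neq 1$) — these are essentially the classical split and quasi-split forms plus a short list of exceptional cases — and in each exhibit an explicit $\tau$, checking $\tau|_Z=\chi I$ by evaluating the highest weight on $Z\subset T\cap K$ and checking smallness by the restricted-root condition. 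Since $Z$ is cyclic or a product of $\mathbb Z/2$'s in these cases, only finitely many $\chi$ need be treated per group, and spin-type constructions with possible tensoring by one-dimensionals exhaust them.

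The main obstacle I expect is not any single case but organizing the case analysis uniformly, in particular proving the smallness criterion cleanly enough that verifying it for each candidate $\tau$ is routine rather than an ad hoc computation. A clean statement would be: if $\mathfrak g$ is split, $\tau$ has highest weight $\Lambda$, and $\Lambda$ restricted to the relevant sub-Cartan of $\mathfrak m$ is regular (or satisfies the appropriate Parthasarathy/Kostant-type inequality), then $\tau|_M$ is irreducible; for the (half-)spin representations this regularity is automatic from the $\rho$-shift. With this in hand, the verification that such $\tau$ realizes a prescribed $\chi\in\widehat Z$ reduces to the remark that the center $Z$ of the simply connected $G_{\mathbb C}$ acts on the spin module(s) by scalars computed from $\rho$, and tensoring spin modules and one-dimensional $K$--types generates all the needed central characters. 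An alternative, perhaps cleaner, route — which I would mention but not pursue in detail — is to invoke the known classification of small $K$--types (Vogan, or the tables in the relevant literature on the Helgason conjecture for covering groups) and simply read off that every central character of $Z$ occurs; this shortcuts the case analysis at the cost of citing external tables rather than giving a self-contained argument.
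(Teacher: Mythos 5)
The paper does not actually prove this statement here: it is quoted from [RRGII, Theorem 11.8.2], and the classification of small $K$--types (which your proposal essentially wants to use) is attributed separately to Seung Lee's thesis [Le]. A classification/case-by-case route is therefore not illegitimate in principle, but your outline of it has concrete gaps. First, you misidentify which groups and which $Z$ must be handled. Here $Z$ is the kernel of the finite covering $G\rightarrow G_{\mathbb{R}}$, where $G_{\mathbb{R}}$ is the analytic subgroup of the simply connected complex group; it is a quotient of $\pi_{1}(G_{\mathbb{R}})\cong\pi_{1}(K_{\mathbb{R}})$ and it maps to the identity of $G_{\mathbb{R}}$, so it is \emph{not} a subgroup of the center of $G_{\mathbb{C}}$, and evaluating central characters of $G_{\mathbb{C}}$ on spin modules via the $\rho$--shift computes the wrong thing (for the metaplectic group, $Z$ dies in $Sp(2n,\mathbb{R})$ while the center of $Sp(2n,\mathbb{C})$ is $\{\pm I\}\subset Sp(2n,\mathbb{R})$). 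Moreover $\pi_{1}(G_{\mathbb{R}})$ is nontrivial for essentially every noncompact simple form, not just split and quasi-split ones: for Hermitian groups such as $Sp(2n,\mathbb{R})$ or $SU(p,q)$ it is infinite, so $Z$ can be cyclic of arbitrary order and these covers are part of the theorem (the needed small $\tau$ there are one-dimensional characters of $K$, a family your spin-based list does not contain). So the asserted ``short list'' with ``$Z$ cyclic or a product of $\mathbb{Z}/2$'s'' is false, and the case analysis as described does not cover the statement being proved.

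Second, the hard point — that $\tau_{|M}$ is irreducible — is asserted rather than proved: your ``regularity/Parthasarathy--Kostant'' criterion is stated only for split $\mathfrak{g}$, is not a correct criterion as formulated, and in the non-split cases (where $M$ is not finite) no criterion is offered at all; this is precisely the content that occupies [Le]. Likewise ``tensoring spin modules and one-dimensional $K$--types generates all the needed central characters'' needs justification, since a tensor product of small $K$--types need not be small (twisting by a character of $K$ is harmless, but tensoring two spin modules of the same simple factor is not). As it stands, the proposal either silently reduces to citing the external classification tables (which is the fallback you mention, and is closer to the paper's citation of [Le] than to the proof of [RRGII, 11.8.2]) or leaves the key irreducibility step and a large family of covering groups unaddressed.
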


A complete classification of representations of $K$ whose restriction to $M$
is irreducible was given in Seung Lee's thesis [Le]. If $(\sigma,V)$ is an
finite dimensional representation of $P$ then we denote by $Ind_{P}^{G}%
(\sigma)^{\infty}$ the smoothly induced representation from $P$ to $G$ that
is, the $C^{\infty}$vectors of $Ind_{P}^{G}(\sigma)$ (defined in Section
2)\ and by $Ind_{P}^{G}(\sigma)_{K}^{\infty}$ the $K$--finite induced
representation of $P$ to $G$. If $\mu$ is a finite dimensional unitary
representation of $M$ and if $\nu\in\mathfrak{a}_{\mathbb{C}}^{\ast}$ then
$I_{P,\mu,\nu}^{\infty}\ $and $V_{P,\sigma,\nu}$ are as in section 2.

An application of the above theorem is (see [RRGII],Lemma 11.4.5).

\begin{lemma}
Let $(\sigma,V)$ be an irreducible finite dimensional representation of $P$
such that $\sigma_{|Z}=\chi I$. Let $\tau$ be a small representation of $K$
with $\tau(z)=\chi(z)I$ and set $\mu=\tau_{|M}$. Then there exist finite
dimensional $G$--representations $F_{1},...,F_{r}$ and $\nu_{1},...,\nu_{r}%
\in\mathfrak{a}_{\mathbb{C}}^{\ast}$ such that $Ind_{P}^{G}(\sigma)_{K}$ is
equivalent to a quotient of $\oplus V_{P,\mu,\nu_{i}}\otimes F_{i}$.
\end{lemma}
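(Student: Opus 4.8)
The plan is to exploit the key structural facts that (i) $\sigma$ is finite dimensional and irreducible over $P$, hence its restriction to $MA$ determines it (since $N$ acts trivially when... actually $N$ may not act trivially, but on a finite dimensional irreducible rep of $P$ the unipotent radical acts trivially by Lie's theorem / Engel), so $\sigma \cong \mu' \otimes a^{\nu_0}$ for some irreducible $\mu'$ of $M$ and some character of $A$; and (ii) the small $K$-type $\tau$ has $\tau_{|M} = \mu$ irreducible. The strategy mirrors the classical "translation functor" arguments: tensoring an induced representation by a finite dimensional representation $F$ of $G$ and then projecting onto a suitable generalized infinitesimal character or $K$-type piece gives, up to semisimplification, a sum of induced representations whose inducing data are obtained from the original by shifting by the weights of $F$.

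The key steps, in order, would be: First, reduce to understanding $\mathrm{Ind}_P^G(\sigma)_K$ in terms of $\mathrm{Ind}_P^G(\mu \otimes a^{\nu})_K = V_{P,\mu,\nu}$. Since $\sigma_{|Z} = \chi I$ and $\mu = \tau_{|M}$ with $\tau(z) = \chi(z)I$, the central characters match, so this is a consistency check rather than an obstacle. Second, invoke the standard "Mackey-type" tensor identity: for a finite dimensional $G$-representation $F$, one has $\mathrm{Ind}_P^G(\sigma)\otimes F \cong \mathrm{Ind}_P^G(\sigma \otimes F_{|P})$, and $F_{|P}$ has a $P$-stable filtration whose subquotients are one-dimensional twists indexed by the $\mathfrak{a}$-weights of $F$, so $\mathrm{Ind}_P^G(\sigma\otimes F_{|P})_K$ has a composition series with subquotients among the $V_{P,\mu',\nu + \lambda_j}$ where $\mu'$ runs over $M$-constituents of $\mu \otimes (F$-weight spaces$)$ and $\lambda_j$ over $\mathfrak{a}$-weights of $F$. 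Third, the crucial move: choose the $F_i$ and $\nu_i$ so that in the sum $\oplus_i V_{P,\mu,\nu_i}\otimes F_i$ the desired quotient $\mathrm{Ind}_P^G(\sigma)_K$ appears. Concretely one wants to "undo" a twist: start from $V_{P,\mu,\nu}$, tensor with a well-chosen $F$ containing the requisite extreme weight, and project to the unique constituent carrying the $\tau$-isotypic data forced by Frobenius reciprocity ($\dim\mathrm{Hom}_K(H_\tau, I_{P,\mu,\nu}^\infty) = 1$, as noted in the excerpt), which singles out $\mathrm{Ind}_P^G(\sigma)_K$ as a quotient.

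The main obstacle I expect is controlling the $K$-type / $M$-type bookkeeping so that the target appears as a genuine \emph{quotient} rather than merely a subquotient. This is where the smallness of $\tau$ and Theorem 8 (existence of a small $\tau$ with prescribed central character) do the real work: smallness guarantees $\tau_{|M} = \mu$ is irreducible and multiplicity-one in the relevant places, so that a projection operator onto the $\tau$-isotypic component — or onto a generalized $Z(\mathfrak{g})$-eigenspace selected by $\gamma_{\tau,\nu}$ — can be realized as a $(\mathfrak{g},K)$-module map with the right image. One must check that after tensoring by $F_i$ and taking the appropriate summand, the quotient map is $\mathfrak{g}$-equivariant and surjects onto exactly $\mathrm{Ind}_P^G(\sigma)_K$; the irreducibility of $\sigma$ over $P$ keeps this clean. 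I would carry out the weight combinatorics for a single $F$ first, identify which summand of $F_{|P}$ yields $\sigma$ (this fixes one $\nu_i$ and shows why several $F_i$ may be needed to realize the full twist by a non-weight character of $A$), and only then assemble the finite direct sum.
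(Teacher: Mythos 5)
The paper does not actually reprove this lemma (it cites [RRGII, Lemma 11.4.5]), but it states it explicitly as an application of Theorem 8, and that is exactly the point your proposal misses. Your preliminary reductions are fine: $N$ acts trivially on an irreducible finite-dimensional $P$-module, so $\sigma\cong\sigma_{0}\otimes a^{\nu_{0}}$ with $\sigma_{0}$ an irreducible $M$-type, and the tensor identity $V_{P,\mu,\nu}\otimes F\cong Ind_{P}^{G}((\mu\otimes a^{\nu})\otimes F_{|P})_{K}$ together with exactness of $K$-finite induction is the right vehicle. The missing idea is why $\sigma_{0}$ can be reached from the \emph{fixed} small type $\mu=\tau_{|M}$ at all. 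Every finite-dimensional representation of $G$ (in the connected, finite-center setting to which the proof has been reduced) factors through the linear quotient, i.e.\ is trivial on the kernel $Z$ of the covering; hence tensoring with the $F_{i}$ can never alter the $Z$-character. So the hypothesis $\sigma_{|Z}=\chi I=\tau_{|Z}$ is not a ``consistency check'' but the enabling mechanism: one forms $\widehat{\mu}\otimes\sigma_{0}$, notes that $Z$ acts trivially on it, so it is a representation of the image of $M$ in the linear group, and then invokes the linear-group fact that every irreducible representation of that group occurs in the restriction to $M$ of a finite-dimensional representation of $G$ (in the appropriate restricted-weight space); since the trivial $M$-type occurs in $\mu\otimes\widehat{\mu}$, this places $\sigma_{0}$ inside $\mu\otimes F_{|M}$. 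Your proposal never supplies this step, and without it there is no way to pass from the arbitrary $M$-type carried by $\sigma$ to the prescribed $\mu$; this is precisely what the existence of a small $\tau$ with $\tau_{|Z}=\chi I$ (Theorem 8) is quoted for.

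Your proposed cure for the quotient-versus-subquotient problem also would not work as stated. Projection onto a $\tau$-isotypic component is not a $(\mathfrak{g},K)$-map, and projection onto a generalized infinitesimal character block only splits off a direct summand; neither converts a subquotient of $V_{P,\mu,\nu}\otimes F$ into a quotient. The correct device operates at the level of $P$-modules: a $P$-quotient of $(\mu\otimes a^{\nu})\otimes F_{|P}$ on which $N$ acts trivially factors through the $N$-coinvariants $\mu\otimes a^{\nu}\otimes(F/\mathfrak{n}F)$, so one must arrange, using the occurrence result above in that sharpened form and allowing several $F_{i}$ and shifts $\nu_{i}$, that a suitable twist of $\sigma_{0}$ is an $MA$-quotient of $\mu\otimes a^{\nu_{i}}\otimes(F_{i}/\mathfrak{n}F_{i})$; applying the exact functor $Ind_{P}^{G}(\cdot)_{K}$ then yields the surjection $\oplus_{i}V_{P,\mu,\nu_{i}}\otimes F_{i}\rightarrow Ind_{P}^{G}(\sigma)_{K}$. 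The multiplicity-one statement $\dim Hom_{K}(H_{\tau},I_{P,\mu,\nu}^{\infty})=1$ plays its role later (in the intertwining-operator and automatic continuity arguments), not in producing this quotient.
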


Using Casselman's imbedding theorem and the Artin-Rees Lemma we see that if
$V\in\mathcal{H}(\mathfrak{g},K)$ then $\dim V/\mathfrak{n}^{k}V<\infty$ and
$\cap_{k}\mathfrak{n}^{k}V=\{0\}.$ We note that we have natural morphisms
\[%
\begin{array}
[c]{ccc}
& T_{k} & \\
V & \rightarrow & Ind_{P}^{G}(V/\mathfrak{n}^{k}V)_{K}^{\infty}%
\end{array}
\]
and an that there exists $k_{o}$ such that $T_{k}$ is injective for all $k\geq
k_{o}$. We will take $k_{o}$ to be the minimal choice. Using this and the
theory of intertwining operators we prove: If $V\in\mathcal{H}(\mathfrak{g}%
,K)$ and if $W$ is any element of $\mathcal{F}_{\operatorname{mod}}(G)$ such
that $W_{K}$ is isomorphic with $V$ then the map $T_{k}$ extends to a
homomorphism of $W$ into $Ind_{P}^{K}(V/\mathfrak{n}^{k}V)^{\infty}.$ Using
this we take $\overline{V}$ to be the closure of $T_{k_{o}}(V)$ in
$Ind_{P}^{K}(V/\mathfrak{n}^{k}V)^{\infty}.$ We then use the theory of
asymptotic expansions of elements of $\mathcal{A}_{\operatorname{mod}}(G)$
restricted to $MA$, the above lemma and the result in [VW] to prove what we
call the automatic continuity theorem [RRGII, Theorem 11.4.1]. This result is
complicated and we will content ourself to just giving the reference to it and
the following implication.

\begin{theorem}
Let $V\in\mathcal{HF}_{\operatorname{mod}}(G)$. If $F$ is a finite dimensional
representation of $P$ and if $T:V_{K}\rightarrow Ind_{P}^{G}(F)_{K}^{\infty}$
is a morphism in $\mathcal{H}(\mathfrak{g},K)$ then $T$ extends to a morphism
$V\rightarrow$ $Ind_{P}^{G}(F)^{\infty}$.
\end{theorem}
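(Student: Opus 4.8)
The plan is to derive this from the \textquotedblleft automatic continuity theorem\textquotedblright\ referenced as [RRGII, Theorem 11.4.1], of which the present statement is advertised as an implication. More precisely, I would proceed by reducing the general finite-dimensional $P$-representation $F$ to the cyclic situation already handled. First, recall that $V\in\mathcal{HF}_{\operatorname{mod}}(G)$ means $V_K\in\mathcal{H}(\mathfrak{g},K)$, so $V_K$ is finitely generated; let $v_1,\dots,v_m$ be a finite set of $K$-finite generators. Then $S=T(V_K)\subset \mathrm{Ind}_P^G(F)_K^\infty$ is the $(\mathfrak g,K)$-submodule generated by $T(v_1),\dots,T(v_m)$, and it suffices to extend $T$ as a map $V\to \overline{S}$, where $\overline{S}$ is the closure of $S$ in $\mathrm{Ind}_P^G(F)^\infty$, since composing with the inclusion $\overline{S}\hookrightarrow \mathrm{Ind}_P^G(F)^\infty$ gives the claim. (Here I use that $\mathrm{Ind}_P^G(F)^\infty$ is a smooth Fr\'echet module of moderate growth, so its closed submodules are again of that type, and that $\overline S$ is the completion $\overline{S_K}$ in the sense of Theorem 1.)

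The core step is then exactly the mechanism used to construct $\overline V$ in Subsection \ref{leftsection}. Replacing $F$ by a sufficiently large $F'=V_K/\mathfrak n^{k}V_K$ type object will not literally work since $F$ is an arbitrary $P$-module, so instead I would argue abstractly: since the automatic continuity theorem [RRGII, Theorem 11.4.1] asserts (in the form we need) that for $W\in\mathcal{HF}_{\operatorname{mod}}(G)$ any $(\mathfrak g,K)$-morphism from $W_K$ into the $K$-finite vectors of a $C^\infty$-induced module from a finite-dimensional $P$-representation is automatically continuous for the Fr\'echet topologies and extends to the completions, one applies it directly with $W=V$ and the target $\mathrm{Ind}_P^G(F)^\infty$. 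The point is that $\mathrm{Ind}_P^G(F)^\infty_K$ is itself an object of $\mathcal{H}(\mathfrak{g},K)$ (admissible by Frobenius reciprocity, finitely generated is checked on the relevant submodule), and the completion of its $K$-finite part inside $\mathcal{HF}_{\operatorname{mod}}(G)$ is $\mathrm{Ind}_P^G(F)^\infty$ itself, or a closed summand thereof; uniqueness of completions then forces the extended map $\widetilde T:V\to \mathrm{Ind}_P^G(F)^\infty$ to have $\widetilde T(V_K)=S$, which is what is required.

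The step I expect to be the main obstacle is showing that the abstract automatic continuity statement actually applies to the target $\mathrm{Ind}_P^G(F)^\infty$ with $F$ an \emph{arbitrary} finite-dimensional $P$-module, rather than just to the quotients of the standard family $\bigoplus V_{P,\mu,\nu_i}\otimes F_i$ appearing in the Lemma above. This is where one needs the asymptotic-expansion input: restricting elements of $\mathcal A_{\operatorname{mod}}(G)$ (the matrix coefficients $g\mapsto \lambda(\pi(g)v)$ for $\lambda$ a continuous functional on $\mathrm{Ind}_P^G(F)^\infty$ and $v\in V$) to $MA$, one controls the exponents, and the combination of the Lemma with the intertwining-operator results of [VW] shows that $\mathrm{Ind}_P^G(F)_K^\infty$ has a composition-series-level description in terms of the small-$K$-type principal series, which is precisely what the automatic continuity theorem needs. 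Once that structural fact is in hand, the extension and the identity $\widetilde T(V_K)=T(V_K)$ follow formally from the density of $V_K$ in $V$, the continuity of the $\mathfrak g$- and $K$-actions, and the uniqueness clause of Theorem 1.
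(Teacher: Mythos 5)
Your proposal follows essentially the same route as the paper: the paper states this theorem as a direct implication of the automatic continuity theorem [RRGII, Theorem 11.4.1] (itself proved via asymptotic expansions of elements of $\mathcal{A}_{\operatorname{mod}}(G)$ restricted to $MA$, the lemma on realizing $Ind_{P}^{G}(\sigma)_{K}$ as a quotient of $\oplus V_{P,\mu,\nu_{i}}\otimes F_{i}$, and the intertwining-operator results of [VW]), and gives no further argument beyond that citation. Your reduction to the closure of $T(V_{K})$ and your appeal to the same ingredients to cover an arbitrary finite dimensional $F$ are consistent with, and no more detailed than, what the paper itself records.
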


This result implies

\begin{corollary}
\label{needed}If $V,W\in\mathcal{H}(\mathfrak{g},K)$ and if $T:W\rightarrow V$
is an injective morphism then the closure of $W$ in $\overline{V}$ is
isomorphic with $\overline{W}$ in $\mathcal{HF}_{\operatorname{mod}}(G)$.
\end{corollary}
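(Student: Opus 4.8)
The plan is to deduce the corollary from the preceding Theorem (the automatic continuity statement, call it Theorem~8) essentially by a soft categorical argument, using that $\overline{V}$ is already known to have the universal property stated in Theorem~\ref{left}. First I would fix the setup: given an injective morphism $T:W\rightarrow V$ in $\mathcal{H}(\mathfrak{g},K)$, let $\overline{W}$ be the completion of $W$ provided by Theorem~\ref{left} applied to $W$, and let $\overline{V}$ be the completion of $V$. By part~2 of Theorem~\ref{left} applied with $X=\overline{W}$ (so $X_K\cong W$) and the given map $T:W\rightarrow V$, there is an $\mathcal{HF}_{\operatorname{mod}}(G)$--morphism $S:\overline{W}\rightarrow\overline{V}$ with $S(\overline{W}_K)=T(W)$; since $T$ is injective and $S$ restricted to $K$--finite vectors is (up to the identifications) $T$, the map $S$ is injective on $\overline{W}_K$. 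I would then argue that $S$ is injective on all of $\overline{W}$ and that its image is exactly the closure of $T(W)$ in $\overline{V}$.

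The injectivity of $S$ on $\overline{W}$ should follow from the fact that $S$ is a morphism in $\mathcal{HF}_{\operatorname{mod}}(G)$, hence has image a direct summand, so $\ker S$ is a closed $G$--submodule which is a direct summand; its $K$--finite vectors are $\ker(S|_{\overline{W}_K})=0$ by the previous paragraph, and since a nonzero object of $\mathcal{HF}_{\operatorname{mod}}(G)$ has nonzero $K$--finite vectors (admissibility together with the density of $K$--finite vectors in any smooth Fréchet module), $\ker S=0$. Then $S$ is a continuous $G$--injection of $\overline{W}$ into $\overline{V}$; its image is closed (direct summand) and $G$--stable, and contains $T(W)$ as its set of $K$--finite vectors, hence equals the closure of $T(W)$ in $\overline{V}$ because $K$--finite vectors are dense in the smooth Fréchet module $S(\overline{W})$. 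To conclude that the closure of $W$ (i.e. of $T(W)$) in $\overline{V}$, with its subspace topology, \emph{is} isomorphic to $\overline{W}$ in the category $\mathcal{HF}_{\operatorname{mod}}(G)$ — not merely the continuous bijective image — I would invoke Theorem~8: the inverse map on $K$--finite vectors, $T(W)\rightarrow W\hookrightarrow \overline{W}\subset Ind_P^G(W/\mathfrak{n}^k W)^\infty$ for $k$ large (using the description of $\overline{W}$ from the construction as the closure of $T_{k_o}(W)$ in an induced representation), extends to a continuous morphism from the closure of $T(W)$ in $\overline{V}$ back to $\overline{W}$, giving a two--sided continuous inverse to $S$ and hence an isomorphism of Fréchet $G$--modules.

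The main obstacle I expect is the last point: upgrading the continuous bijection $S$ to a topological isomorphism. A continuous bijection of Fréchet spaces is automatically open by the open mapping theorem, so in fact this is not an issue once $S:\overline{W}\rightarrow S(\overline{W})$ is known to be a continuous bijection onto a Fréchet space (a closed subspace of a Fréchet space), and one could bypass Theorem~8 entirely here. The only genuinely substantive inputs are therefore (i) the universal property in Theorem~\ref{left} to produce $S$, and (ii) the admissibility/density fact that a nonzero smooth admissible finitely generated Fréchet $G$--module of moderate growth has nonzero (indeed dense) $K$--finite vectors, which guarantees $S$ is injective and has the right image. I would present the argument in that order, flagging that the open mapping theorem is what makes the identification of topologies automatic, and noting that the image being a direct summand of $\overline{V}$ is exactly the statement needed for $S(\overline{W})=\overline{T(W)}$ to be the closure of $W$ in $\overline{V}$ as an object of $\mathcal{HF}_{\operatorname{mod}}(G)$.
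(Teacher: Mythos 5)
The main problem with your proposal is a circularity: you construct the morphism $S:\overline{W}\rightarrow\overline{V}$ by invoking part 2 of Theorem \ref{left}, but in the paper that universal property is itself deduced \emph{from} this corollary (the sentence immediately following the corollary is ``This easily implies 2. in Theorem \ref{left}''). At this point of the development the only available inputs are the construction of $\overline{V}$ as the closure of $T_{k_{o}}(V)$ in $Ind_{P}^{G}(V/\mathfrak{n}^{k_{o}}V)^{\infty}$ and the automatic continuity theorem stated just before the corollary. The repair is immediate and is what the paper intends: apply automatic continuity twice. Applied to the object $\overline{W}$ and the $(\mathfrak{g},K)$--morphism $\overline{W}_{K}\cong W\overset{T}{\rightarrow}V\overset{T_{k_{o}}}{\rightarrow}Ind_{P}^{G}(V/\mathfrak{n}^{k_{o}}V)_{K}^{\infty}$, it gives a continuous $G$--map $\overline{W}\rightarrow Ind_{P}^{G}(V/\mathfrak{n}^{k_{o}}V)^{\infty}$ whose image, being contained in the closure of the image of the dense subspace $W$, lies in the closure $X$ of $T(W)$ in $\overline{V}$. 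Applied to $X$ --- which is an object of $\mathcal{HF}_{\operatorname{mod}}(G)$ with $X_{K}=T(W)\cong W$ (the $K$--isotypic components of $X$ coincide with those of $T(W)$ because they are finite dimensional, hence closed) --- and the morphism $W\rightarrow Ind_{P}^{G}(W/\mathfrak{n}^{k_{o}}W)_{K}^{\infty}$, it gives a continuous map $X\rightarrow\overline{W}$. The two composites are continuous and restrict to the identity on the dense subspaces of $K$--finite vectors, hence are the identity, so $X$ is isomorphic with $\overline{W}$ in $\mathcal{HF}_{\operatorname{mod}}(G)$. This two--morphism argument also makes your injectivity and open--mapping steps unnecessary.

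Two smaller points. Your inference ``image a direct summand, so $\ker S$ is a direct summand'' is not justified; fortunately you only need that $\ker S$ is a closed invariant subspace whose $K$--finite vectors vanish, together with the density of $K$--finite vectors in any closed invariant subspace of a smooth Fr\'{e}chet module, which you do state. And the second half of your proposal (extending the inverse on $K$--finite vectors by the automatic continuity theorem) is exactly the right device; the point is simply that the forward map should be produced the same way, rather than by appealing to Theorem \ref{left}, part 2.
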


This easily implies 2. in Theorem \ref{left}

This completes the sketch of the existence of the maximal completion in
$\mathcal{F}_{\operatorname{mod}}(G)$.

We note that for any $V\in\mathcal{H}(\mathfrak{g},K)$, there exists a Hilbert
completion of $\overline{V}$, $H$ such that the $K$--$C^{\infty}$ vectors of
$H$ are the same as the $G$--$C^{\infty}$ vectors and $H^{\infty}=\overline
{V}$. \ This implies that $V\rightarrow$ $\overline{V}$ defines a left exact
faithful functor from $\mathcal{H}(\mathfrak{g},K)$ to $\mathcal{HF}%
_{\operatorname{mod}}(G)$.

\subsection{Step two: A right exact functor}

The next step is the dual assertion: the existence of a minimal completion.

\begin{theorem}
\label{minimal}Let $V\in\mathcal{H(}\mathfrak{g},K)$ then there exists an
object $\overline{\overline{V}}\in\mathcal{HF}_{\operatorname{mod}}(G)$ such
that $\overline{\overline{V}}_{K}$ is isomorphic with $V$ and if
$W\in\mathcal{HF}_{\operatorname{mod}}(G)$ and $T:W_{K}\rightarrow
\overline{\overline{V}}_{K}$ is a surjective morphism then $T$ extends to
continuous surjection of $W$ onto $\overline{\overline{V}}$.
\end{theorem}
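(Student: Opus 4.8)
The plan is to construct $\overline{\overline{V}}$ by a duality argument from the left-exact functor already built in Theorem~\ref{left}, exploiting the fact that the $(\mathfrak{g},K)$-module category $\mathcal{H}(\mathfrak{g},K)$ carries a contragredient duality $V\mapsto V'$ (the $K$-finite dual) which is exact and satisfies $(V')'\cong V$. Concretely, given $V\in\mathcal{H}(\mathfrak{g},K)$, form the maximal completion $\overline{V'}$ of the dual module $V'$, and then take $\overline{\overline{V}}$ to be the (strong) continuous dual $(\overline{V'})'$, equipped with the strong topology; one must check it is again a smooth Fr\'echet module of moderate growth with $\overline{\overline{V}}_K\cong (V')'\cong V$. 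This is where the appendix material on $C^\infty$-vectors (mentioned in the introduction) gets used: one needs that the dual of a moderate-growth Fr\'echet $G$-module, smoothed appropriately, is again an object of $\mathcal{HF}_{\operatorname{mod}}(G)$ with the expected $K$-finite vectors.

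The universal (right-exact) property is then obtained by dualizing the universal (left-exact) property of Theorem~\ref{left}. Suppose $W\in\mathcal{HF}_{\operatorname{mod}}(G)$ and $T\colon W_K\to V$ is a surjection of $(\mathfrak{g},K)$-modules. Dualizing gives an injection $T'\colon V'\hookrightarrow (W_K)'=(W')_K$. Now $W'$ (suitably interpreted as the smooth dual of $W$) lies in $\mathcal{HF}_{\operatorname{mod}}(G)$ with $(W')_K\cong (W_K)'$, so by the universal property of the maximal completion $\overline{V'}$ — in the form of Corollary~\ref{needed}, since $T'$ is injective — the map $T'$ extends to an isomorphism of $\overline{V'}$ onto the closure of $V'$ inside $W'$, in particular to a continuous $G$-embedding $\overline{V'}\hookrightarrow W'$ whose image is a direct summand. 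Dualizing once more and smoothing, this yields a continuous $G$-surjection $W\cong (W')'\twoheadrightarrow (\overline{V'})'=\overline{\overline{V}}$ restricting to $T$ on $K$-finite vectors. The direct-summand clause in Corollary~\ref{needed} is what guarantees the dualized map is genuinely surjective (not merely dense image), since a split injection dualizes to a split surjection.

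First I would set up the duality functor carefully on $\mathcal{HF}_{\operatorname{mod}}(G)$: define $W'$ as the strong continuous dual, show the contragredient $G$-action is again smooth of moderate growth after passing to $C^\infty$-vectors, and identify $(W')_K$ with the $K$-finite dual $(W_K)'$; this requires the automatic-continuity-flavored input that on $K$-finite vectors the algebraic and topological duals agree. Next I would verify that $\overline{\overline{V}}:=(\overline{V'})'$ has $\overline{\overline{V}}_K\cong V$, using the biduality $(V')'\cong V$ in $\mathcal{H}(\mathfrak{g},K)$ together with the fact that $\overline{V'}_K\cong V'$. Then I would run the dualization of Corollary~\ref{needed} as above to extract the surjection. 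Finally I would record that $V\mapsto\overline{\overline{V}}$ is right exact and faithful, the mirror image of the left-exact statement ending Section~\ref{leftsection}.

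The main obstacle I expect is purely functional-analytic rather than representation-theoretic: controlling the topology on the dual and showing that \emph{smoothing} the dual $(\overline{V'})'$ does not destroy the moderate-growth estimate and does not enlarge or shrink the $K$-finite vectors — i.e. that the natural map from the algebraic $K$-finite dual into the smooth dual is a bijection, and that a continuous split injection of Fr\'echet $G$-modules dualizes to a continuous split \emph{surjection} of the smoothed duals. Getting the direct-summand bookkeeping to survive the two dualizations (so that "dense image" is actually upgraded to "onto") is the delicate point, and it is exactly the reason the construction in [RRGII] was, as the author notes, convoluted; the simplification here is to isolate Corollary~\ref{needed} with its summand clause and feed it through the duality, rather than redoing the asymptotic-expansion analysis on the minimal-globalization side directly.
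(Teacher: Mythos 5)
Your overall instinct---that the minimal completion should be a dual of the maximal completion of the dual---is the right one, and it is the spirit of the paper's construction. But two of your key steps have genuine gaps. First, the object $(\overline{V'})'$ with the strong topology is not a Fr\'echet space (the strong dual of a non-normable Fr\'echet space never is), so it is not even a candidate member of $\mathcal{HF}_{\operatorname{mod}}(G)$; and the ``smoothing'' you invoke to repair this is not bookkeeping but is exactly where all the analysis lives. The paper never dualizes the Fr\'echet space $\overline{(\widehat{V})}$ directly: it first realizes $\overline{(\widehat{V})}$ as the $G$--smooth vectors of a concrete Hilbert representation $(\widehat{\pi},H)$ built from $Ind_P^G(\widehat{V}/\mathfrak{n}^k\widehat{V})$ (Lemma 11), and then takes $\overline{\overline{V}}:=(\pi,H^{\infty})$ for the conjugate-dual action. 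The analytic core is Theorem \ref{crux}: every $\lambda\in V^{\ast}_{\operatorname{mod}}$ extends continuously to $H^{\infty}$, proved by introducing the auxiliary inner product $\langle v,w\rangle=\sum_i\int_G(v_i,\widehat{\pi}(g)w)(\widehat{\pi}(g)v,v_i)\Vert g\Vert^{-2d-d_o}dg$, showing $\sum_\gamma\Vert\lambda_\gamma\Vert_1^2<\infty$, and invoking the regularity Lemma \ref{regularity} on $(1+C_K)^l$ seminorms. Your proposal contains no substitute for these estimates, and without them there is no identification of the $K$--finite vectors, no moderate-growth bound, and no Fr\'echet structure on your candidate. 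Relatedly, your appeal to ``$W'$ (suitably interpreted) lies in $\mathcal{HF}_{\operatorname{mod}}(G)$'' for an arbitrary $W\in\mathcal{HF}_{\operatorname{mod}}(G)$ is a duality statement that is a consequence of the Casselman--Wallach theorem, not an available input at this stage.

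Second, your surjectivity mechanism fails. Corollary \ref{needed} says that the closure of $W$ in $\overline{V}$ is isomorphic with $\overline{W}$; it does \emph{not} say that this closed embedding is split, and indeed the splitting of such embeddings is essentially the content of the full theorem (morphisms in $\mathcal{HF}_{\operatorname{mod}}(G)$ are required to have direct-summand images, but that the extension of a $(\mathfrak{g},K)$--morphism is such a morphism is precisely what is being proved). So ``split injection dualizes to split surjection'' has no basis here, and a mere closed embedding of Fr\'echet spaces does not dualize, after smoothing, to a surjection. The paper gets surjectivity by two different devices: Banach's criterion (a continuous map of Fr\'echet spaces is surjective if its transpose is injective with weakly closed image), used in the corollary to Theorem \ref{crux} to prove uniqueness of $\overline{\overline{V}}$; and, in the final step of the proof of Theorem \ref{minimal}, by performing the dualization at the \emph{Hilbert} level, where restriction of functionals from $H$ to the closed subspace $H_1$ is an honest surjection of Hilbert representations, and then using exactness of the $C^{\infty}$--vector functor on Hilbert representations. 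You would need to import one of these mechanisms (or an equivalent closed-range argument) to make your final arrow $W\twoheadrightarrow\overline{\overline{V}}$ genuinely onto.
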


We note that there is up to isomorphism only one object in $\mathcal{HF}%
_{\operatorname{mod}}(G)$ with the property above enjoyed by $\overline
{\overline{V}}.$ Also $V\rightarrow\overline{\overline{V}}$ is a right exact
functor from $\mathcal{H(}\mathfrak{g},K)$ to the category $\mathcal{HF}%
_{\operatorname{mod}}(G)$. The CW Theorem will therefore be proved if we can
show that $\overline{V}$ and $\overline{\overline{V}}$ are isomorphic in
$\mathcal{HF}_{\operatorname{mod}}(G)$. If this condition is satisfied for $V$
then we say that $V$ is good (obviously, we want to prove that all admissible
finitely generated $(\mathfrak{g},K)$ modules are good). We now devote a
subsection to the proof of this theorem.

\subsection{\label{proofmin}The proof of Theorem \ref{minimal}}

If $V\in\mathcal{H}(\mathfrak{g},K)$ we denote by $\widehat{V}$ its conjugate
dual module. That is, $\widehat{V}\in\mathcal{H}(\mathfrak{g},K)$ consists of
all of the real linear functionals, $\lambda$, on $V$ satisfying\medskip

1. $\lambda(zv)=\bar{z}\lambda(v)$ for $z\in\mathbb{C}$.

2. Set $k\lambda=\lambda\circ k^{-1}$ then $\dim Span_{\mathbb{C}}%
K\lambda<\infty$.

The action of $\mathfrak{g}$ on $\widehat{V}$ is given by $X\lambda
=-\lambda\circ X$ for $\lambda\in\widehat{V}$, $X\in\mathfrak{g}$.

$\widehat{V}$ is clearly a vector space over $\mathbb{C}$. If $(\pi,H)$ is a
Hilbert representation of $G$ then we define the conjugate dual representation
to $(\pi,H)$ to be $(\hat{\pi},H)$ with $\hat{\pi}(g)=\pi(g^{-1})^{\ast}$.
Obviously, if $\pi$ is unitary $\hat{\pi}=\pi$. If $(\pi,H)$ is a Hilbert
representation of $G$ then we denote by $(\pi,H^{\infty})$ the $G$%
--$C^{\infty}$ vectors and by $(\pi,H^{\infty_{K}})$ the $K-C^{\infty}$
vectors. We denote the $C^{\infty}$--vectors of $G$ relative to $\widehat{\pi
}$ by $\widehat{H}{}^{\infty}$.

We first relate Theorem \ref{characterization} to Theorem \ref{minimal}. We
note that if $V\in$ $\mathcal{HF}_{\operatorname{mod}}(G)$ and if $\lambda\in
V^{\prime}$ and $v\in V_{K}$ then $f(g)=\lambda(gv)$ defines an element of
$\mathcal{A}_{\operatorname{mod}}(G)$. If $V\in$ $\mathcal{H(}\mathfrak{g},K)$
then we denote by $V_{\operatorname{mod}}^{\ast}$ the space of all $\lambda\in
V^{\ast}$ such that for each $v\in V$ there exists $f_{\lambda,v}\in$
$\mathcal{A}_{\operatorname{mod}}(G)$ such that $\lambda(kxv)=xf_{\lambda
,v}(k)$ for $k\in K$ and $x\in U(\mathfrak{g)}$(i.e. $\lambda$ satisfies the
condition in Theorem \ref{characterization}). Thus $V_{\operatorname{mod}%
}^{\ast}$ contains $Z_{|V}^{\prime}$ for any object of $\mathcal{HF}%
_{\operatorname{mod}}(G)$ such that $Z_{K}=V$. We start our argument with

\begin{lemma}
Let $V\in\mathcal{H(}\mathfrak{g},K)$ then there exists a Hilbert
representation of $G$, $(\pi,H)$, such that $H_{K}^{\infty}$ is isomorphic
with $V$, $(\pi,H^{\infty})=(\pi,H^{\infty_{K}})$, $(\widehat{\pi}%
,\widehat{H}{}^{\infty})=(\widehat{\pi},\widehat{H}{}^{\infty_{K}})$ and
$(\widehat{\pi},\widehat{H}{}^{\infty})$ is isomorphic with $\overline
{(\widehat{V})}$ .
\end{lemma}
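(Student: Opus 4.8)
The plan is to produce the Hilbert representation by a standard "completion" construction starting from the maximal globalization $\overline{V}$ built in Step one, and then to dualize carefully so that the conjugate-dual side gives the maximal globalization of $\widehat V$. First I would recall that Step one already gives, for any $V\in\mathcal H(\mathfrak g,K)$, a Hilbert representation $(\pi,H)$ with $H^{\infty}=\overline V$ and with the property that the $K$-$C^{\infty}$ vectors of $H$ coincide with the $G$-$C^{\infty}$ vectors; this is exactly the remark made at the end of subsection \ref{leftsection}. So the conditions $H_K^{\infty}\cong V$ and $(\pi,H^{\infty})=(\pi,H^{\infty_K})$ are already in hand from that construction. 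The work is to arrange simultaneously that the conjugate dual $(\widehat\pi,H)$ has the same two properties (smooth vectors for $G$ equal smooth vectors for $K$) and that $\widehat H{}^{\infty}$, as a smooth Fréchet $G$-module, is the maximal globalization $\overline{(\widehat V)}$ of $\widehat V$.

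The key step is the identification $\widehat H{}^{\infty}\cong\overline{(\widehat V)}$. The idea is: $\widehat H{}^{\infty}$ is a smooth Fréchet $G$-module, it has moderate growth (Banach-in-disguise: $H$ with the conjugate action is again a Hilbert representation, hence moderate growth), and it is admissible and finitely generated because its $K$-finite vectors form a $(\mathfrak g,K)$-module which one checks to be isomorphic to $\widehat{V}$ (the $K$-finite conjugate-dual vectors of $H$ are exactly the conjugate dual of $H_K^{\infty}\cong V$, using that $H$ is admissible so $H_K^{\infty}$ is dense and reflexive-on-$K$-types). Thus $\widehat H{}^{\infty}\in\mathcal{HF}_{\mathrm{mod}}(G)$ with $(\widehat H{}^{\infty})_K\cong\widehat V$. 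To promote this to "it is \emph{the maximal} globalization" I would invoke the universal (left-exact) property in Theorem \ref{left}: any object of $\mathcal{HF}_{\mathrm{mod}}(G)$ whose $K$-finite vectors realize $\widehat V$ admits a canonical morphism into $\overline{(\widehat V)}$ extending the identity on $K$-finite vectors, and this morphism is an isomorphism precisely when the source is \emph{itself} maximal. So I must show that $\widehat H{}^{\infty}$ enjoys the universal property of $\overline{(\widehat V)}$ directly, or equivalently — and this is cleaner — that $\widehat H{}^{\infty}$ is a \emph{minimal} closed $G$-submodule containing nothing extra. The most efficient route is: the maximal globalization $\overline{W}$ of any $W$ is characterized by the fact that its continuous dual, restricted to $W$, is \emph{all} of $W_{\mathrm{mod}}^{\ast}$; and $(\widehat H{}^{\infty})' \big|_{\widehat V}$ can be computed to be the full $(\widehat V)_{\mathrm{mod}}^{\ast}$ because elements of $(\widehat H{}^{\infty})'$ are represented by vectors of $H^{\infty_K}=H_K^{\infty}\cong V$, via the Hilbert pairing, and the matrix-coefficient map $v\mapsto(g\mapsto\langle \widehat\pi(g)\lambda, v\rangle)$ lands in $\mathcal A_{\mathrm{mod}}(G)$ — that is the automorphic-forms characterization, Theorem \ref{characterization}. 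Running this identification in both directions pins down $\widehat H{}^{\infty}=\overline{(\widehat V)}$.

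Concretely the steps, in order, are: (i) take the Hilbert representation from Step one with $H^{\infty}=\overline V$ and $H^{\infty}=H^{\infty_K}$; (ii) verify the conjugate dual $(\widehat\pi,H)$ is again a Hilbert representation of $G$ of moderate growth, so $\widehat H{}^{\infty}$ is a smooth Fréchet module of moderate growth; (iii) compute $(\widehat H{}^{\infty})_K$: show it equals the $K$-finite vectors of $\widehat\pi$, and that these are naturally isomorphic to $\widehat V$ using that $H$ is admissible; in particular $\widehat H{}^{\infty}$ is admissible and finitely generated, so it lies in $\mathcal{HF}_{\mathrm{mod}}(G)$; (iv) show $(\widehat\pi,\widehat H{}^{\infty})=(\widehat\pi,\widehat H{}^{\infty_K})$ — I expect this to follow from the same argument that gave $(\pi,H^{\infty})=(\pi,H^{\infty_K})$, applied to the Hilbert representation $\widehat\pi$ in place of $\pi$ (they are on the same footing, so nothing new is needed, and in fact one can just choose from Step one the Hilbert completion of $\overline{(\widehat V)}$ and dualize \emph{that}, getting property (iv) for free and chasing the argument symmetrically); (v) identify $\widehat H{}^{\infty}$ with $\overline{(\widehat V)}$ via the dual pairing $H\times H$ and the characterization of continuous functionals in $\mathcal A_{\mathrm{mod}}(G)$ (Theorem \ref{characterization}), together with the universal property of the maximal globalization (Theorem \ref{left}).

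The main obstacle I anticipate is step (v): showing that the smooth conjugate-dual module $\widehat H{}^{\infty}$ is not merely \emph{some} globalization of $\widehat V$ in $\mathcal{HF}_{\mathrm{mod}}(G)$ but exactly the \emph{maximal} one. Admissibility, moderate growth and the correct $K$-finite vectors come nearly for free; the content is that $\widehat H{}^{\infty}$ has "enough continuous functionals," equivalently that it is large enough to be terminal among globalizations. The clean way to see this is precisely the equivalence between Theorem \ref{characterization} and Theorem \ref{minimal} that the text flags — namely that $\overline{(\widehat V)}$ is the object of $\mathcal{HF}_{\mathrm{mod}}(G)$ whose restricted continuous dual is $(\widehat V)_{\mathrm{mod}}^{\ast}$, and that for the Hilbert model $H$ one has $(\widehat H{}^{\infty})'|_{\widehat V} = (\widehat V)_{\mathrm{mod}}^{\ast}$ because matrix coefficients of $K$-finite vectors against smooth functionals are moderate-growth automorphic-type functions and conversely every $\lambda\in(\widehat V)_{\mathrm{mod}}^{\ast}$ is realized by a vector of $H^{\infty_K}=H^{\infty}$. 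So the proof reduces, in the end, to carefully setting up the Hilbert pairing and invoking Theorem \ref{characterization}.
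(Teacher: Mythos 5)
There is a genuine gap, and it is a circularity at the heart of your step (v). You characterize the maximal globalization $\overline{(\widehat V)}$ as ``the object of $\mathcal{HF}_{\operatorname{mod}}(G)$ whose restricted continuous dual is all of $(\widehat V)_{\operatorname{mod}}^{\ast}$.'' At this stage of the development that property characterizes the \emph{minimal} completion, not the maximal one: it is exactly the content of Theorem \ref{crux} and its corollary (which are proved \emph{after}, and \emph{using}, the present lemma) that the $C^{\infty}$ vectors of the Hilbert model produced by this lemma have full restricted dual, and that this property pins down $\overline{\overline{V}}$. That the maximal completion $\overline{V}$ also has full restricted dual is equivalent to $\overline V\cong\overline{\overline V}$, i.e.\ to the Casselman--Wallach theorem itself, so it cannot be invoked here. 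The same circularity appears twice more in your outline: (a) your starting point, a Hilbert model with $H^{\infty}=\overline V$ for which you then want $\widehat H{}^{\infty}\cong\overline{(\widehat V)}$, would already assert goodness of $V$; and (b) your claim that ``every $\lambda\in(\widehat V)_{\operatorname{mod}}^{\ast}$ is realized by a vector of $H^{\infty_K}$'' is precisely the hard analytic estimate of Theorem \ref{crux} (the weighted integral $\left\Vert g\right\Vert ^{-2d-d_{o}}dg$ construction), which rests on this lemma. In addition, the assertion that elements of $(\widehat H{}^{\infty})^{\prime}$ are represented by vectors of $H_K^{\infty}$ is false in general: continuous functionals on the smooth vectors are distribution vectors, not smooth (let alone $K$-finite) ones.

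The route the paper takes is the one you mention only parenthetically in (iv), and it avoids all of this: apply the construction of subsection \ref{leftsection} to $\widehat V$, so that $\overline{(\widehat V)}$ is realized concretely as the $C^{\infty}$ vectors of $H_{0}$, the closure of $T_{k}(\widehat V)$ in the induced Hilbert representation $Ind_{P}^{G}(\widehat V/\mathfrak{n}^{k}\widehat V)$; for such an induced representation the $G$-$C^{\infty}$ vectors are the $K$-$C^{\infty}$ vectors, and by exactness of the $C^{\infty}$ functor this persists for the closed subspace $H_{0}$ and for quotients. Then one \emph{defines} $(\pi,H)$ to be the conjugate dual of $H_{0}$, realized as the quotient $Ind_{P}^{G}(\widehat{\left(\widehat V/\mathfrak{n}^{k}\widehat V\right)}\otimes\delta_{P})/H_{0}^{\perp}$. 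With this definition all four assertions are immediate: $H_{K}^{\infty}\cong V$, the smooth vectors equal the $K$-smooth vectors on both sides because both Hilbert spaces are subquotients of representations induced from finite-dimensional modules, and $(\widehat\pi,\widehat H{}^{\infty})\cong\overline{(\widehat V)}$ because the conjugate dual of the conjugate dual returns $H_{0}$. No universal-property or dual-characterization argument is needed, and nothing downstream (Theorem \ref{crux}, Theorem \ref{minimal}) is presupposed.
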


Note that we may assume that $\pi_{|K}$ is unitary thus $H{}^{\infty_{k}%
}=\widehat{H}{}^{\infty_{K}}$.

\begin{proof}
We note that according to the discussion at the end of subsection
\ref{leftsection} (and in the notation thereof) $\overline{(\widehat{V})}$
\ is isomorphic with the space of $C^{\infty}$ vectors in the closure of
$T_{k}(\widehat{V})$ in the induced Hilbert representation $Ind_{P}%
^{G}(\widehat{V}/\mathfrak{n}^{k}\widehat{V})$ for an appropriate $k$. The
$C^{\infty}$ vectors in this representation are the $K$--$C^{\infty}$ vectors
(this is true for any parabolically induced representation from a finite
dimensional Hilbert representation). \ We note that taking $C^{\infty}$
vectors defines an exact functor from the category of Hilbert representations
to the category of smooth Fr\'{e}chet representations. Thus the $C^{\infty}%
$--vectors in the closure of $T_{k}(\widehat{V})$ in $Ind_{P}^{G}%
(\widehat{V}/\mathfrak{n}^{k}\widehat{V})$ are the $K$--$C^{\infty}$ vectors.
We take $H$ to be the closure of $T_{k}(\widehat{V})$. We now observe that the
conjugate dual representation of $Ind_{P}^{G}(\widehat{V}/\mathfrak{n}%
^{k}\widehat{V})$ is the induced representation $Ind_{P}^{G}(\widehat{\left(
\widehat{V}/\mathfrak{n}^{k}\widehat{V}\right)  }\otimes\delta_{P})$with
$\delta_{P}$ the modular function of $P$ and with underlying Hilbert space
$Ind_{M}^{K}(\widehat{V}/\mathfrak{n}^{k}\widehat{V})$ and the $G$%
--$C^{\infty}$ vectors are the same as the $K$--$C^{\infty}$. Let
$\left\langle ...|...\right\rangle $ denote the conjugate linear
$G$--invariant pairing between $Ind_{P}^{G}(\widehat{V}/\mathfrak{n}%
^{k}\widehat{V})$ and $Ind_{P}^{G}(\widehat{\left(  \widehat{V}/\mathfrak{n}%
^{k}\widehat{V}\right)  }\otimes\delta_{P})$ and set $Z=H^{\perp}$relative to
$\left\langle ...|...\right\rangle $. Then as a Hilbert space the conjugate
dual of $H$ is $Ind_{P}^{G}(\widehat{\left(  \widehat{V}/\mathfrak{n}%
^{k}\widehat{V}\right)  }\otimes\delta_{P})/Z$ . If $(\pi,H)$ is the
corresponding Hilbert representation of $G$ then the underlying object of
\ $\mathcal{H}(\mathfrak{g},K)$ is isomorphic with $V$. The assertions of the
lemma are now clear.
\end{proof}

The crux of the matter is the following theorem which will be proved after we
give two implications.

\begin{theorem}
\label{crux}Let $V\in\mathcal{H(}\mathfrak{g},K)$ and let $(\pi,H)$ be a
Hilbert representation such that $V$ is isomorphic with $H_{K}^{\infty}$ .
Assume that $(\hat{\pi},\widehat{H}{}^{\infty})$ is as an object in
$\mathcal{HF}_{\operatorname{mod}}(G)$ isomorphic with $\overline
{(\widehat{V})}$ then denoting $(\pi,H^{\infty})$ by $Z$ we have
$Z_{|V}^{\prime}=V_{\operatorname{mod}}^{\ast}$ (see the definition preceding
Lemma 11).
\end{theorem}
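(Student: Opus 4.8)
\textbf{Proof proposal for Theorem \ref{crux}.}

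The plan is to prove the two inclusions $Z_{|V}^{\prime}\subseteq V_{\operatorname{mod}}^{\ast}$ and $V_{\operatorname{mod}}^{\ast}\subseteq Z_{|V}^{\prime}$ separately. The first inclusion is the easy one and is essentially recorded already in the paragraph preceding Lemma 11: if $\lambda\in Z^{\prime}=(H^{\infty})^{\prime}$ and $v\in V\subseteq H_{K}^{\infty}$, then $f_{\lambda,v}(g)=\lambda(\pi(g)v)$ is a smooth function on $G$ which lies in $\mathcal{A}_{\operatorname{mod}}(G)$: the moderate-growth estimate on $Z$ together with continuity of $\lambda$ gives condition 1, the fact that $v$ is $Z(\mathfrak{g})$-finite (as $V\in\mathcal{H}(\mathfrak{g},K)$) gives condition 2, and $K$-finiteness of $v$ on the right gives condition 3; finally $xf_{\lambda,v}(k)=\lambda(\pi(k)\pi(x)v)=\lambda(kxv)$, which is exactly the defining condition of $V_{\operatorname{mod}}^{\ast}$. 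So $\lambda_{|V}\in V_{\operatorname{mod}}^{\ast}$.

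The substance is the reverse inclusion: every $\lambda\in V_{\operatorname{mod}}^{\ast}$ extends to a continuous functional on $Z=H^{\infty}$. Here I would use the hypothesis identifying $(\hat\pi,\widehat{H}^{\infty})$ with the maximal completion $\overline{(\widehat{V})}$. A functional $\lambda\in V^{\ast}$ satisfying property 1 of $\widehat{V}$ (after passing to the associated conjugate-linear functional) is the same as an element of $\widehat{V}{}^{\ast}$ in the appropriate sense, and to say $\lambda$ extends continuously to $Z=H^{\infty}$ is, via the $G$-invariant conjugate-linear pairing between $H$ and $\widehat{H}$, the same as producing the matrix-coefficient map $v\mapsto f_{\lambda,v}$ as a $(\mathfrak{g},K)$-morphism from $V$ into a space of functions that can be identified with (a subspace of) $\widehat{H}^{\infty}=\overline{(\widehat{V})}$. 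Concretely: the assignment $\lambda\in V_{\operatorname{mod}}^{\ast}$, $v\in V\mapsto f_{\lambda,v}\in\mathcal{A}_{\operatorname{mod}}(G)$ is, for fixed $\lambda$, a $(\mathfrak{g},K)$-homomorphism $\Phi_{\lambda}:V\to\mathcal{A}_{\operatorname{mod}}(G)$ (with $G$ acting by right translation), whose image is an admissible finitely generated $(\mathfrak{g},K)$-submodule. Dualizing, $\Phi_{\lambda}$ corresponds to a $(\mathfrak{g},K)$-homomorphism from some realization of $\widehat V$-type data back to $V$; the point is to read off from the existence of $f_{\lambda,v}$ in $\mathcal{A}_{\operatorname{mod}}(G)$ a $(\mathfrak{g},K)$-morphism into $\overline{(\widehat V)}{}_{K}=\widehat V$, and then invoke the universal (left-exactness / extension) property of $\overline{(\widehat V)}=\widehat H^{\infty}$ from subsection \ref{leftsection} — in particular Theorem 7 and Corollary \ref{needed} — to conclude that this algebraic morphism extends to the Fréchet level, i.e. $\lambda$ is carried into $(\widehat H^{\infty})^{\prime\prime}$-data pairing continuously against $H^{\infty}$.

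More precisely, I expect the argument to run: given $\lambda\in V_{\operatorname{mod}}^{\ast}$, consider the map $\Psi_\lambda : V \to C^\infty(G)$, $v\mapsto f_{\lambda,v}$; show it intertwines the right regular action and lands in $\mathcal{A}_{\operatorname{mod}}(G)$, hence its image $V_{\lambda}$ is in $\mathcal{H}(\mathfrak g,K)$. Evaluation at $K$ (the functional $h\mapsto h(k)$, suitably interpreted with $U(\mathfrak g)$) recovers $\lambda$, so it suffices to extend the evaluation/pairing to $H^{\infty}$. Now identify $V_{\lambda}$, via the pairing, with a quotient of $\widehat{V}$ (the $\mathfrak g$-action $X\lambda = -\lambda\circ X$ matches right differentiation), so $\overline{V_{\lambda}}$ is a quotient of $\overline{(\widehat V)} \cong \widehat H^\infty$; dually, its continuous dual embeds into $(\widehat H^{\infty})^{\prime}$, and the $G$-invariant pairing between $H^{\infty}$ and $\widehat H^{\infty}$ lets us realize elements of $(\widehat H^{\infty})^{\prime}$ relevant to $V$ as matrix coefficients against $H^{\infty}$, producing the desired continuous extension of $\lambda$ to $Z=H^{\infty}$. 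The main obstacle I anticipate is the bookkeeping at this last stage — keeping straight the three dualities in play (conjugate-dual $(\mathfrak g,K)$-module $\widehat V$, conjugate-dual Hilbert representation $\widehat H$, and the continuous topological dual), checking that the $G$-invariant conjugate-linear pairing between $H$ and $\widehat H$ restricts correctly to the $C^{\infty}$ vectors, and verifying that the abstract $(\mathfrak g,K)$-morphism one extracts genuinely matches, under the universal property of the maximal completion, a \emph{continuous} functional rather than merely an algebraic one. The moderate-growth hypothesis on $f_{\lambda,v}$ is exactly what guarantees the matrix coefficient lands in $\overline{(\widehat V)}$ (moderate growth being built into $\mathcal{HF}_{\operatorname{mod}}(G)$) rather than in some larger globalization, and Theorem 7 is the tool that upgrades algebraic to continuous.
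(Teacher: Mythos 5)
Your easy inclusion $Z_{|V}^{\prime}\subseteq V_{\operatorname{mod}}^{\ast}$ is fine and agrees with the paper. The hard direction, however, has a genuine gap: the universal property of the maximal completion (Theorem \ref{left}, Theorem 7, Corollary \ref{needed}) only supplies continuous $G$--maps \emph{from an object of} $\mathcal{HF}_{\operatorname{mod}}(G)$ that you already have in hand into $\overline{(\widehat{V})}$ or into $Ind_{P}^{G}(F)^{\infty}$; it cannot, by itself, convert the purely algebraic datum ``$f_{\lambda,v}\in\mathcal{A}_{\operatorname{mod}}(G)$ for all $v$'' into the estimate $|\lambda(v)|\leq B\,p(v)$ against a continuous seminorm $p$ of $H^{\infty}$, which is what ``$\lambda$ extends to $Z$'' means. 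Producing a topological object attached to $\lambda$ is exactly the content of the paper's proof, and it is quantitative: one fixes a finite orthonormal set $v_{1},\dots,v_{n}$ generating $V$, uses the moderate-growth bounds on $\|\hat{\pi}(g)\|$ and on the finitely many $f_{\lambda,v_{i}}$ to define an auxiliary inner product $\left\langle v,w\right\rangle =\sum_{i}\int_{G}(v_{i},\hat{\pi}(g)w)(\hat{\pi}(g)v,v_{i})\left\Vert g\right\Vert ^{-2d-d_{o}}dg$ on $H$, completes to a Hilbert representation $H_{1}$ with $(H_{1})_{K}=\widehat{V}$, shows via Schur orthogonality that the isotypic components $\lambda_{\gamma}$ satisfy $\sum_{\gamma}\|\lambda_{\gamma}\|_{1}^{2}<\infty$, and then invokes Lemma \ref{regularity} (the topology of $H_{1}^{\infty}$ is given by $\|(1+C_{K})^{l}\cdot\|_{1}$) together with the hypothesis $\widehat{H}{}^{\infty}\cong\overline{(\widehat{V})}$ --- which is what forces $H_{1}^{\infty}=H^{\infty}$ --- to produce $u\in H$ and $l$ with $\lambda(v)=((I+C_{K})^{l}v,u)$, whence continuity on $H^{\infty}$. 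None of this analytic core (the weighted $L^{2}$ norm, the convergence of $\sum\lambda_{\gamma}$, the $C_{K}$--regularity lemma) appears in your sketch; what you call ``bookkeeping at the last stage'' is in fact the proof.

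Two specific missteps in the sketch would also need repair. First, the image $V_{\lambda}=\Psi_{\lambda}(V)\subset\mathcal{A}_{\operatorname{mod}}(G)$ is a quotient of $V$ (the map $v\mapsto f_{\lambda,v}$ intertwines the given action with right translation), not a quotient of $\widehat{V}$, so you cannot realize $\overline{V_{\lambda}}$ as a quotient of $\overline{(\widehat{V})}\cong\widehat{H}{}^{\infty}$ in the way proposed. Second, the final duality step is false as stated: elements of $(\widehat{H}{}^{\infty})^{\prime}$ are distribution vectors and are in general \emph{not} realized by pairing against vectors of $H^{\infty}$ (or of $H$); what must be shown is the opposite containment, namely that $\lambda$ lies in $(H^{\infty})^{\prime}$, and that requires the explicit seminorm estimate obtained above, in which the moderate-growth hypothesis on the $f_{\lambda,v_{i}}$ and the identification $\widehat{H}{}^{\infty}\cong\overline{(\widehat{V})}$ are used in an essential, quantitative way rather than only through a universal property.
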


\begin{corollary}
If $X,Y\in\mathcal{HF}_{\operatorname{mod}}(G)$ are such that $X_{K}$ is
isomorphic with $Y_{K}$ and $X_{|X_{K}}^{\prime}=\left(  X_{K}\right)
_{\operatorname{mod}}^{\ast}$, $Y_{|X_{K}}^{\prime}=\left(  Y_{K}\right)
_{\operatorname{mod}}^{\ast}$ then $X$ is isomorphic with $Y$ in
$\mathcal{HF}_{\operatorname{mod}}(G)$.
\end{corollary}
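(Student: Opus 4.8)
The plan is to deduce the corollary from Theorem~\ref{crux} together with the basic properties of the maximal and minimal completions established earlier. Fix $V \in \mathcal{H}(\mathfrak{g},K)$ and suppose $X, Y \in \mathcal{HF}_{\operatorname{mod}}(G)$ have $X_K \cong Y_K \cong V$ with $X'_{|V} = V^{\ast}_{\operatorname{mod}} = Y'_{|V}$. The point is that the subspace $V^{\ast}_{\operatorname{mod}} \subset V^{\ast}$ is an intrinsic object attached to the $(\mathfrak{g},K)$--module $V$, not to any particular completion, and Theorem~\ref{crux} exhibits one specific completion of $V$ — namely $Z = H^{\infty}$ for the Hilbert representation $H$ furnished by Lemma~11 — whose continuous dual restricted to $V$ realizes exactly this space. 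So it suffices to show that any $X$ with $X'_{|V} = V^{\ast}_{\operatorname{mod}}$ must be isomorphic to this distinguished $Z$; then $X \cong Z \cong Y$ and we are done.

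First I would set up the comparison map. Since $Z_K \cong V \cong X_K$, fix a $(\mathfrak{g},K)$--isomorphism $\varphi : Z_K \to X_K$. Because $\overline{V}$ is the maximal completion (Theorem~\ref{left} and Corollary~\ref{needed}), any object of $\mathcal{HF}_{\operatorname{mod}}(G)$ with $K$--finite vectors $V$ admits a canonical continuous $G$--map to $\overline{V}$ extending the identity on $V$; applying this to both $X$ and $Z$ gives continuous injections $\iota_X : X \hookrightarrow \overline{V}$ and $\iota_Z : Z \hookrightarrow \overline{V}$ (injective because they are the identity on the dense subspace $V$, and the images are closed direct summands). Transposing, $X'$ and $Z'$ are realized as subspaces of the algebraic dual $V^{\ast}$ via restriction to $V$, and by hypothesis together with Theorem~\ref{crux} both restrictions give the same subspace $V^{\ast}_{\operatorname{mod}}$. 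The key observation is then that a closed $G$--invariant subspace of $\overline{V}$ containing $V$ is determined by the set of continuous functionals on $\overline{V}$ that vanish on it — equivalently by which functionals in $\overline{V}'_{|V}$ extend continuously to it — so $\iota_X(X)$ and $\iota_Z(Z)$ are the same subspace of $\overline{V}$. This identification is a topological $G$--isomorphism $X \cong Z$ because both carry the subspace topology from $\overline{V}$ (here one uses that morphisms in $\mathcal{HF}_{\operatorname{mod}}(G)$ have closed image that is a direct summand, so the maps to $\overline{V}$ are topological embeddings).

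The main obstacle I anticipate is justifying that a closed invariant subspace of $\overline{V}$ is recovered from its polar in $\overline{V}'$ in a way that is compatible with the Fr\'echet topologies — i.e. the Hahn--Banach / bipolar argument needs the subspaces $\iota_X(X)$ and $\iota_Z(Z)$ to be closed (which holds, being direct summands) and needs the natural identification of $X'_{|V}$ with $\overline{V}'/\iota_X(X)^{\perp}$ to be correct, which is standard duality for closed subspaces of Fr\'echet spaces. A secondary subtlety is making sure that the isomorphism produced is a morphism in the category $\mathcal{HF}_{\operatorname{mod}}(G)$ (image a direct summand), but since it is a topological $G$--isomorphism of Fr\'echet spaces this is automatic. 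One should also note the symmetric hypothesis $Y'_{|X_K} = (Y_K)^{\ast}_{\operatorname{mod}}$ uses the identification $X_K \cong Y_K$; translating along that fixed isomorphism, $Y$ likewise agrees with the distinguished completion $Z$ of $V$, giving $X \cong Y$. Thus the corollary follows from Theorem~\ref{crux} and the established properties of $\overline{V}$ by a duality argument inside the maximal completion.
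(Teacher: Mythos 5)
There is a genuine gap, and it sits at the heart of your argument. You assume that the canonical maps $\iota_X : X\rightarrow\overline{V}$ and $\iota_Z : Z\rightarrow\overline{V}$ have closed images that are direct summands, and then try to recover $\iota_X(X)$ and $\iota_Z(Z)$ from duality inside $\overline{V}$. But $V=\overline{V}_K$ is dense in $\overline{V}$, and both images contain $V$; so if those images were closed they would equal $\overline{V}$, and your argument would yield $X\cong\overline{V}$ for \emph{every} $X$ with $X_K\cong V$, without ever using the hypothesis $X'_{|V}=(X_K)^{\ast}_{\operatorname{mod}}$ --- that is, it would prove the full Casselman--Wallach theorem for free. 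This signals that the closedness is exactly what is not available at this stage: the extension maps furnished by Theorem \ref{left} are continuous intertwiners whose images are in general only dense in $\overline{V}$ (the direct-summand property of arbitrary intertwiners is a \emph{consequence} of the theorem being proved, not an input), and indeed the paper's own proof forms $A(X)\cap B(Y)$ with the finer topology precisely because $A(X)$ and $B(Y)$ need not be closed. For the same reason the bipolar step collapses: the polar in $\overline{V}'$ of any subspace containing the dense subspace $V$ is $\{0\}$, so ``which continuous functionals on $\overline{V}$ vanish on it'' distinguishes nothing, and ``which functionals extend continuously'' to a dense, non-closed subspace carrying a finer topology is not governed by Hahn--Banach duality in $\overline{V}$ at all. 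Knowing $X'_{|V}$ as a subspace of $V^{\ast}$ does not, by soft duality, pin down $X$ as a subspace of $\overline{V}$; one needs a closed-range/surjectivity criterion.

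That criterion is exactly what the paper's proof supplies and what your argument is missing. The paper identifies $X_K$ and $Y_K$ with $V$, takes the induced maps $A:X\rightarrow\overline{V}$, $B:Y\rightarrow\overline{V}$, and sets $Z=A(X)\cap B(Y)$ topologized by the seminorms of both $X$ and $Y$, so that $Z\in\mathcal{HF}_{\operatorname{mod}}(G)$ and $Z_K=V$; hence $Z'_{|V}\subseteq V^{\ast}_{\operatorname{mod}}$, while $Z'_{|V}\supseteq X'_{|V}=Y'_{|V}=V^{\ast}_{\operatorname{mod}}$ by hypothesis, so the transposes of the natural morphisms $\alpha:Z\rightarrow X$ and $\beta:Z\rightarrow Y$ are bijective. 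Banach's theorem (a continuous linear map of Fr\'echet spaces is surjective if its transpose is injective with weakly closed image, [T, 37.2]) then gives that $\alpha$ and $\beta$ are surjective, whence $A(X)=B(Y)$ and $X\cong Y$. If you want to salvage your write-up, replace the bipolar argument inside $\overline{V}$ by this intersection construction and the Banach surjectivity criterion; Theorem \ref{crux} enters only to guarantee that some completion realizes $V^{\ast}_{\operatorname{mod}}$, not to identify closed subspaces of $\overline{V}$.
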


\begin{proof}
We will use a theorem of Banach which we now recall. Assume that $Z$ and $W$
are Fr\'{e}chet spaces and let $u:Z\rightarrow W$ be a continuous linear map.
Set $u^{T}(\lambda)=\lambda\circ u$ for $\lambda\in W^{\prime}$ so
$u^{T}:W^{\prime}\rightarrow Z^{\prime}$. Then the theorem says that $u$ is
surjective if $u^{T}$ is injective and the image of $u^{T}$ is weakly closed
in $Z^{\prime}$ (see Treves [T,3.7.2]). We will identify $X_{K}$ and $Y_{K}$
and call them both $V$. Theorem \ref{left} implies that the identity map
$V\rightarrow V$ induces continuous morphisms $A:X\rightarrow\overline{V}$ and
$B:Y\rightarrow\overline{V}$. Let $Z=A(X)\cap B(Y)$ with topology given by the
seminorms for both $X$ and $Y$. Then $Z$ is complete, $G$--invariant and hence
an element of $\mathcal{HF}_{\operatorname{mod}}(G)$ and $Z_{K}=V$. There are
two morphisms $\alpha:Z\rightarrow X$ and $\beta:Z\rightarrow Y$ in
$\mathcal{HF}_{\operatorname{mod}}(G)$. Now $Z_{|V}^{\prime}\supset
X_{|V}^{\prime}=Y_{|V}^{\prime}=V_{\operatorname{mod}}^{\ast}\supset
Z_{|V}^{\prime}$ thus $\alpha^{T}$ and $\beta^{T}$ are bijective. Hence,
$\alpha$ and $\beta$ are surjective by Banach's theorem. This implies $X=Y$
(under our identification of $K$--finite vectors).
\end{proof}

We note that Lemma 11 combined with the above theorem implies that if
$V\in\mathcal{H(}\mathfrak{g},K)$ then there is up to isomorphism exactly one
$Z\in\mathcal{HF}_{\operatorname{mod}}(G)$ with $Z_{K}$ isomorphic with $V$
and $Z_{|Z_{K}}^{\prime}=(Z_{K})_{\operatorname{mod}}^{\ast}$. We will choose
one such $Z$ and denote it $\overline{\overline{V}}.$ Then $V\rightarrow
\overline{\overline{V}}$ defines a functor from $\mathcal{H(}\mathfrak{g},K)$
to $\mathcal{HF}_{\operatorname{mod}}(G)$. We will show that it has the
property described in Theorem \ref{minimal} after we prove Theorem 12.

\bigskip

The idea of the proof of Theorem \ref{crux} is to show that if $\lambda
\in\left(  H_{K}\right)  _{\operatorname{mod}}^{\ast}$ then there exists a
Hilbert representation of $G$, $(\pi_{1},H_{1})$ such that $\left(
H_{1}\right)  _{K}=\widehat{V}$ , so $(\widehat{\pi_{1}},H_{1})_{K}$ is
isomorphic with $V,$ under the isomorphism $T$ , and there exists an element
$u\in H_{1}$ so that $\left\langle Tv,u\right\rangle =\lambda(v)$ for $v$ in
$V$ here $\left\langle ...,...\right\rangle $ is the inner product on $H_{1}$.
In other words, there is a Hilbert completion of $V$ so that $\lambda$ is an
element of the conjugate dual Hilbert representation. We will give a complete
proof since we will use the technique in last section. We write $(...,...)$
for the inner product on $H$ which we assume is $K$--invariant. If $\mu\in
V_{\operatorname{mod}}^{\ast}$, $v\in V$ we will use the notation $f_{\mu,v}$
for the element $f$ of $\mathcal{A}_{\operatorname{mod}}(G)$ such that
$xf(k)=\mu(kxv)$ for $x\in U(\mathfrak{g})$ and $k\in K$.

Let $v_{1},...,v_{n}$ be an orthonormal basis of a $K$ and $Z(\mathfrak{g}%
)$--invariant subspace of $W$ in $V$ such that $V=U(\mathfrak{g})W$. Fix
$\lambda\in\left(  H_{K}\right)  _{\operatorname{mod}}^{\ast}$. Let $d$ and
$C$ be such that for every $g\in G$ we have

1. $\left\Vert \widehat{\pi}(g)\right\Vert \leq C\left\Vert g\right\Vert ^{d}$ and

2. $|f_{\lambda,v_{i}}(g)|\leq C\left\Vert g\right\Vert ^{d}$ for $i=1,...,n$.

We also choose $d_{o}$ such that%
\[
\int_{G}\left\Vert g\right\Vert ^{-d_{o}}dg<\infty.
\]

If $v,w\in H$ then we define a new inner product%
\[
\left\langle v,w\right\rangle =\sum_{i=1}^{n}\int_{G}(v_{i},\widehat{\pi
}(g)w)(\widehat{\pi}(g)v,v_{i})\left\Vert g\right\Vert ^{-2d-d_{o}}dg.
\]
We note that since%
\[
|(v_{i},\widehat{\pi}(g)v)(\widehat{\pi}(g)w,v_{i})|\leq\left\Vert
v_{i}\right\Vert ^{2}\left\Vert v\right\Vert \left\Vert w\right\Vert
\left\Vert \widehat{\pi}(g)\right\Vert ^{2}\leq C^{2}\left\Vert v_{i}%
\right\Vert ^{2}\left\Vert v\right\Vert \left\Vert w\right\Vert \left\Vert
g\right\Vert ^{2d}%
\]
the above integral converges for all $v,w\in H$ and defines a new inner
product on $H$ that is $K$--invariant (since $\left\Vert gk\right\Vert
=\left\Vert g\right\Vert $ for $g\in G$ and $k\in K$). Furthermore, if we set
$\left\Vert v\right\Vert _{1}^{2}=\left\langle v,v\right\rangle $ for $v\in H$
then%
\[
\left\Vert v\right\Vert _{1}\leq C_{1}\left\Vert v\right\Vert \qquad
\overset{}{(\ast)}%
\]
with $C_{1}=C\sqrt{n\int_{G}\left\Vert g\right\Vert ^{-d_{o}}dg}$. We set
$H_{1}$ equal to the Hilbert space completion of $H$ relative to $\left\Vert
...\right\Vert _{1}$. The above inequality implies that $H$ imbeds
continuously into $H_{1}$ via the canonical injection $(v\mapsto v).$ Noting
that $\widehat{V}$ is dense in $H_{1}$ we have $\left(  H_{1}\right)
_{K}=\widehat{V}$.

We next observe that if $v\in H$%
\[
\left\Vert \widehat{\pi}(x)v\right\Vert _{1}^{2}=\sum_{i=1}^{n}\int%
_{G}|(\widehat{\pi}(gx)v,v_{i})|^{2}\left\Vert g\right\Vert ^{-2d-d_{o}}dg=
\]%
\[
\sum_{i=1}^{n}\int_{G}|(\widehat{\pi}(g)v,v_{i})|^{2}\left\Vert gx^{-1}%
\right\Vert ^{-2d-d_{o}}dg\leq\left\Vert x\right\Vert ^{2d+d_{o}}\left\Vert
v\right\Vert _{1}%
\]
Here we use%
\[
\left\Vert g\right\Vert =\left\Vert gx^{-1}x\right\Vert \leq\left\Vert
gx^{-1}\right\Vert \left\Vert x\right\Vert
\]
so
\[
\left\Vert gx^{-1}\right\Vert ^{-2d-d_{o}}\leq\left\Vert g\right\Vert
^{-2d-d_{o}}\left\Vert x\right\Vert ^{2d+d_{o}}.
\]
This implies that $\widehat{\pi}(g)$ on $H$ extends to a strongly continuous
representation $(\pi_{1},H_{1})$ of $G$. \ Let $X\in Lie(G)$ and
$t\in\mathbb{R}^{\times}$. If $v$ is a $C^{\infty}$ vector in $H$ relative to
$\widehat{\pi}$ then using (*) \ above we have%
\[
\left\Vert \frac{\pi_{1}(\exp(tX))v-v}{t}-d\widehat{\pi}(X)v\right\Vert
_{1}=\left\Vert \frac{\widehat{\pi}(\exp(tX))v-v}{t}-d\widehat{\pi
}(X)v\right\Vert _{1}\leq
\]%
\[
C_{1}\left\Vert \frac{\widehat{\pi}(\exp(tX))v-v}{t}-d\widehat{\pi
}(X)v\right\Vert .
\]
Thus, $d\pi_{1}(X)v=d\widehat{\pi}(X)v$. So $(\pi_{1},H_{1})$ is a Hilbert
completion of $\widehat{V}$. Iterating this argument we see that if $Z\subset
H$ is the space of $C^{\infty}$ vectors relative to $\widehat{\pi}$ with the
usual Fr\'{e}chet topology then the imbedding of $Z$ into $H_{1}$ maps it
continuously into $H_{1}^{\infty}.$ On the other hand, we have $Z$ is
isomorphic in $\mathcal{HF}_{\operatorname{mod}}(G)$ with $\overline
{(\widehat{V})}$ so Theorem \ref{left} implies that the identity map on
$\widehat{V}$ induces a continuous isomorphism of $H_{1}^{\infty}$ into $Z$.
This implies that $Z=H_{1}^{\infty}$.

If $\gamma\in\widehat{K}$ let $E_{\gamma}$ denote the orthogonal projection to
the $K$--isotypic component ~$H_{1}(\gamma)=\widehat{V}(\gamma)$ we note that
$E_{\gamma|H}$ is also the orthogonal projection onto $H(\gamma)$. If $\mu\in
V^{\ast}$ then denote $\mu\circ E_{\gamma}$ by $\mu_{\gamma}$, then there is a
unique element $\tau(\mu_{\gamma})\in V(\gamma)$ such that
\[
\mu_{\gamma}(v)=(v,\tau(\mu_{\gamma})),v\in V.
\]

Note that $\tau$ is conjugate linear and we will identify $\mu_{\gamma}$ with
$\tau(\mu_{\gamma})$ as an element of $\widehat{V}(\gamma)$. Let for $\mu\in
V_{\operatorname{mod}}^{\ast}$ and $v\in V$, $f_{\mu,v}\in\mathcal{A}%
_{\operatorname{mod}}(G)$ be as in the beginning of this proof. We note that
if $\mu\in V(\gamma)^{\ast}$ then
\[
f_{\mu,v}(g)=(\pi(g)v,\tau(\mu))=(v,\widehat{\pi}(g)^{-1}\tau(\mu)).
\]
We now return to the element $\lambda$ in the beginning of the proof. We will
drop the $\tau$ and consider $\lambda_{\gamma}$ to be an element of
$\widehat{V}(\gamma)$. We now prove that $\sum\lambda_{\gamma}$ converges in
$H_{1}$. Observe that 2. above implies that%
\[
\int_{G}|f_{\lambda,v}(g)|^{2}\left\Vert g\right\Vert ^{-2d-d_{o}}dg<\infty.
\]
We note that if $\chi_{\gamma}$ is the character of $\gamma$ then
\[
f_{\lambda_{\gamma},v}(g)=d(\gamma)\int_{K}\chi_{\gamma}(k^{-1})f_{\lambda
,v}(kg)dk.
\]
Thus, using fact that $\left\Vert k_{1}gk_{2}\right\Vert =\left\Vert
g\right\Vert $ for all $k_{1},k_{2}\in K$ and $g\in G$, the Schur
orthogonality relations combined with the convergence of the $K$--Fourier
series in $L^{2}(G,\left\Vert g\right\Vert ^{-2d-d_{o}}dg)$ imply that%
\[
\int_{G}|f_{\lambda,v_{i}}(g)|^{2}\left\Vert g\right\Vert ^{-2d-d_{o}}%
dg=\sum_{\gamma\in\widehat{K}}\int_{G}|f_{\lambda_{\gamma},v_{i}}%
(g)|^{2}\left\Vert g\right\Vert ^{-2d-d_{o}}dg.
\]
Hence%
\[
\infty>\sum_{i=1}^{n}\sum_{\gamma\in\widehat{K}}\int_{G}|f_{\lambda_{\gamma
},v_{i}}(g)|^{2}\left\Vert g\right\Vert ^{-2d-d_{o}}dg=
\]%
\[
\sum_{i=1}^{n}\sum_{\gamma\in\widehat{K}}\int_{G}|(v_{i},\widehat{\pi}%
(g)^{-1}\lambda_{\gamma})|^{2}\left\Vert g\right\Vert ^{-2d-d_{o}}dg=
\]%
\[
\sum_{i=1}^{n}\sum_{\gamma\in\widehat{K}}\int_{G}|(v_{i},\widehat{\pi
}(g)\lambda_{\gamma})|^{2}\left\Vert g^{-1}\right\Vert ^{-2d-d_{o}}dg=
\]%
\[
\sum_{i=1}^{n}\sum_{\gamma\in\widehat{K}}\int_{G}|(v_{i},\widehat{\pi
}(g)\lambda_{\gamma})|^{2}\left\Vert g\right\Vert ^{-2d-d_{o}}dg=\sum
\left\Vert \lambda_{\gamma}\right\Vert _{1}^{2}.
\]

\begin{lemma}
\label{regularity}The topology of $H_{1}^{\infty}$ is given by the seminorms
\[
p_{l}(v)=\left\Vert (1+C_{K})^{l}v\right\Vert _{1}.
\]

\end{lemma}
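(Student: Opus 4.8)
The plan is to identify the Fr\'{e}chet topology on $H_1^\infty$ --- which a priori is the topology of uniform convergence of all derivatives, i.e. the topology given by the seminorms $v\mapsto\|d\pi_1(u)v\|_1$ for $u\in U(\mathfrak g)$ --- with the single scale of Sobolev-type seminorms $p_l(v)=\|(1+C_K)^l v\|_1$. One inclusion is automatic: since $C_K\in U(\mathfrak{k})\subset U(\mathfrak g)$, each $p_l$ is continuous in the $C^\infty$ topology, so the $p_l$-topology is coarser. The content is the reverse inequality: for every $u\in U(\mathfrak g)$ there exist $l$ and $C>0$ with $\|d\pi_1(u)v\|_1\le C\,p_l(v)$ for all $v\in H_1^\infty$. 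The standard mechanism for this is elliptic regularity for the element $\Delta=1+C_K$ (or, if one prefers a globally elliptic operator on $G$, for $1+C_K-C$ where $C$ is the Casimir of $G$, which is elliptic since $B$ is negative definite on $Lie(K)$): on the Hilbert representation $(\pi_1,H_1)$ the operator $d\pi_1(\Delta)$ is (after closure) self-adjoint with spectrum bounded below, and the Sobolev embedding/elliptic estimate gives that $\mathrm{Dom}((1+d\pi_1(\Delta))^l)$ for $l$ large enough is contained in $H_1^\infty$ with the corresponding norm estimate. So the first step is to invoke the standard elliptic regularity theorem for strongly continuous Banach (here Hilbert) representations of $G$ --- this is in [RRGI], Chapter 1 --- to get that $H_1^\infty=\bigcap_l \mathrm{Dom}(d\pi_1(\Delta)^l)$ and that the graph norms of $d\pi_1(\Delta)^l$ generate the $C^\infty$ topology.

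The second step is to replace the global elliptic operator by $C_K$ alone. Here one uses that $(\pi_1,H_1)_{|K}$ decomposes discretely: $H_1=\widehat{\bigoplus}_{\gamma\in\widehat K} H_1(\gamma)$, and on the finite-dimensional isotypic piece $H_1(\gamma)$ the operator $C_K$ acts by the scalar $\langle\gamma\rangle$ (the Casimir eigenvalue), with $\langle\gamma\rangle\to\infty$ as the highest weight of $\gamma$ grows. Consequently $(1+C_K)$ is already self-adjoint positive on $H_1$ with discrete spectrum $\{1+\langle\gamma\rangle\}$, and $\|(1+C_K)^l v\|_1^2=\sum_\gamma (1+\langle\gamma\rangle)^{2l}\|v_\gamma\|_1^2$. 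Comparing with the global Casimir $C$ of $G$: since $C-C_K$ lies in $U(\mathfrak p)\cdot U(\mathfrak p)$ for a Cartan decomposition $\mathfrak g=\mathfrak k\oplus\mathfrak p$, one shows by the usual Harish-Chandra argument that for any fixed $u\in U(\mathfrak g)$, $d\pi_1(u)$ restricted to $H_1(\gamma)$ is controlled by a fixed power of $(1+\langle\gamma\rangle)$ times $\|\cdot\|_1$ --- the point being that $u$ shifts $K$-types by a bounded amount and on each $H_1(\gamma')$ the operators $d\pi_1(X)$, $X\in\mathfrak g$, have norm bounded polynomially in $\langle\gamma'\rangle$ because $(\pi_1,H_1)$ has moderate growth (from $(\ast)$ and the moderate-growth property of $\overline{(\widehat V)}$ established in the previous lemma). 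Assembling: $\|d\pi_1(u)v\|_1\le C\sum_\gamma (1+\langle\gamma\rangle)^{2N}\|v_\gamma\|_1 \le C\,\|(1+C_K)^l v\|_1$ once $l$ is chosen large enough that $\sum_\gamma (1+\langle\gamma\rangle)^{-2}<\infty$ absorbs the extra factor (finiteness of $\sum_\gamma (1+\langle\gamma\rangle)^{-s}$ for $s$ large is a standard fact about $\widehat K$).

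I expect the main obstacle to be the uniform-in-$\gamma$ polynomial control of $d\pi_1(X)$ on each $K$-isotypic component: one must convert the global statement ``$(\pi_1,H_1)$ has moderate growth'' into the pointwise-on-$K$-types bound $\|d\pi_1(X)_{|H_1(\gamma)}\|\le C(1+\langle\gamma\rangle)^{M}$. This is the technical heart, and it is exactly the kind of estimate that appears in the proof that a moderate-growth Fr\'{e}chet module is nuclear / that its $C^\infty$ topology is given by $(1+C_K)$; I would extract it from [RRGI] 2.A (the same circle of ideas used to show $\left(I_{Q,\sigma,\nu}^\infty\right)$ has the topology defined by the $p_l(f)=\|(I+C_K)^l f\|$ noted in Section~2). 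Once that estimate is in hand, the rest is the bookkeeping above: discreteness of the $K$-spectrum, the scalar action of $C_K$, and convergence of $\sum_\gamma(1+\langle\gamma\rangle)^{-s}$. A clean alternative, avoiding $\mathfrak p$-bounds, is to note that $H_1$ embeds continuously and $G$-equivariantly into an induced representation $Ind_P^G(\widehat V/\mathfrak n^k\widehat V)$ (as in the proof of Lemma~11 / the maximal-completion construction), and for induced representations from finite-dimensional data the identity ``$C^\infty$ topology $=$ topology of the $p_l(f)=\|(I+C_K)^l f\|$'' is precisely what was recorded in Section~2; then pull the seminorm estimate back along the (topological) embedding. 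I would likely present this induced-representation route as the main argument since it is shortest and reuses machinery already set up.
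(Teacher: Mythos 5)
Your first step (the ``global'' elliptic-regularity statement) matches the paper's Lemma~\ref{roe}, except for a slip in the choice of operator: with $B(X,Y)=\mathrm{tr}(XY)$ one has $C=-\sum Y_a^{2}+\sum Z_b^{2}$ and $C_K=-\sum Y_a^{2}$, so $1+C_K-C=1-\sum Z_b^{2}$ involves only the $\mathfrak{p}$--directions and is \emph{not} elliptic on $G$; the elliptic combination is $\Delta=C-2C_K=\sum X_i^{2}$, which is what the paper uses. The real problem is your second step. You need, for each $u\in U(\mathfrak{g})$, an estimate $\Vert d\pi_1(u)v\Vert_{1}\leq C\,\Vert(1+C_K)^{l}v\Vert_{1}$, and you propose to get it from a uniform-in-$\gamma$ polynomial bound $\Vert d\pi_1(X)_{|H_1(\gamma)}\Vert\leq C(1+\langle\gamma\rangle)^{M}$ ``because $(\pi_1,H_1)$ has moderate growth,'' to be extracted from [RRGI]~2.A. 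That reference only gives $\Vert\pi_1(g)\Vert\leq C\Vert g\Vert^{r}$; the $K$--isotypic polynomial bound on the derived action is not there, and for a general admissible Hilbert representation it is essentially equivalent to the statement of the lemma itself (it is one of the genuinely hard points of the Casselman--Wallach machinery). So the step you yourself call the ``technical heart'' is asserted, not proved. What the paper actually does, and what your proposal never uses, is the special structure of $\Vert\cdot\Vert_{1}$: the vectors $v_1,\dots,v_n$ were chosen to span a $K$-- \emph{and} $Z(\mathfrak{g})$--invariant subspace, so $d\pi(C)v_i=\sum_j a_{ji}v_j$; moving $C$ onto the $v_i$ inside the defining integral gives $\Vert Cv\Vert_{1}\leq nA\Vert v\Vert_{1}$, i.e. the Casimir of $G$ is a \emph{bounded} operator on $H_1$. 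Then $\Delta^{k}=\sum_{j}(-2)^{j}\binom{k}{j}C^{k-j}C_K^{j}$ yields $\Vert\Delta^{k}v\Vert_{1}\leq\sum_j 2^{j}\binom{k}{j}B^{k-j}\Vert(1+C_K)^{j}v\Vert_{1}$, and Lemma~\ref{roe} finishes; no $K$--type-wise bounds on $d\pi_1(X)$ are needed.

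Your ``clean alternative'' does not work either. The continuous map goes the wrong way: inequality $(\ast)$ says $\Vert v\Vert_{1}\leq C_1\Vert v\Vert$, so it is $H$ (the closed subspace of the induced representation) that embeds into $H_1$, not $H_1$ into $Ind_P^{G}(\widehat{V}/\mathfrak{n}^{k}\widehat{V})$. Even granting $H_1^{\infty}=H^{\infty}$ as Fr\'echet spaces (which is indeed established before the lemma is invoked), the Section~2 fact about induced representations gives the topology via the seminorms $\Vert(1+C_K)^{l}v\Vert$ in the norm of $H$, not in $\Vert\cdot\Vert_{1}$; to convert one family into the other you need a polynomial-in-$\gamma$ comparison of $\Vert\cdot\Vert$ with $\Vert\cdot\Vert_{1}$ on each isotypic component, and in the paper the comparison $\Vert v\Vert\leq B\Vert(1+C_K)^{l}v\Vert_{1}$ is \emph{deduced from} Lemma~\ref{regularity} in the continuation of the proof of Theorem~\ref{crux}, not available beforehand. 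So this route is circular as proposed.
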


We will give a proof in the appendix below, there is also a more elementary
proof in [RRGII] pp. 91-93. In the appendix we prove that this is also true
for any Hilbert representation for which the Casimir operator of $G$ acts by a
scalar on the $C^{\infty}$--vectors. We now continue with the proof of Theorem
\ref{crux}.

We note that $p_{l}(v)\leq p_{l+1}(v)$ for all $v\in H_{1}^{\infty}$. Since
$(\widehat{\pi},H^{\infty})=H_{1}^{\infty}$,\thinspace$\left\Vert
...\right\Vert $ defines a continuous norm on $H_{1}^{\infty}$. This implies
that there exists $l$ and $B>0$ such that%
\[
\left\Vert v\right\Vert \leq B\left\Vert (1+C_{K})^{l}v\right\Vert _{1}.
\]
Let $\mu(\gamma)I$ be the action of $C_{K}$ on $V_{\gamma}$. Then the above
inequality implies that
\[
(1+\mu(\gamma))^{-l}\left\Vert \lambda_{\gamma}\right\Vert \leq B\left\Vert
\lambda_{\gamma}\right\Vert _{1}\text{.}%
\]
This implies that
\[
\sum(1+\mu(\gamma))^{-l}\lambda_{\gamma}%
\]
converges in $H,$ to an element $u$. Thus if $v\in V$ then
\[
((I+C_{K})^{l}v,u)=\lambda(v).
\]
Hence
\[
\left\vert \lambda(v)\right\vert \leq B\left\Vert u\right\Vert \left\Vert
(1+C_{k})^{l}v\right\Vert .
\]
Thus $\lambda$ extends to a continuous functional on $(\pi,H^{\infty})$. This
completes the proof of Theorem \ref{crux}.

\bigskip

We now complete the proof of Theorem \ref{minimal}. Let $S:V\rightarrow W$ be
a surjective morphism in $\mathcal{H(}\mathfrak{g},K)$ then we have
$\widehat{S}:\widehat{W}\rightarrow\widehat{V}$ is an injective morphism. Let
$F=\widehat{V}/\mathfrak{n}^{k_{o}}\widehat{V}$ as a $P$ module then
$\overline{\widehat{V}}$ is the closure of $T_{k_{o}}(\widehat{V})$ in
$(\pi,H^{\infty})$ where $H$ is the closure of $T_{k_{o}}(\widehat{V})$ in the
Hilbert representation $Ind_{p}^{G}(F)$. For simplicity we replace
$\widehat{V}$ with $T_{k_{o}}(\widehat{V})$ and $\widehat{W}$ with it's image
in $\widehat{V}$. Let $H_{1}$ (not the same as the one in the proof above) be
the closure of $\widehat{W}$ in $H$ and if $\pi_{1}$ is the \ corresponding
\ action of $G$ on $H_{1}$ then $(\pi_{1},H_{1}^{\infty})$ is equivalent with
$\overline{\widehat{W}}$. Now $(\widehat{\pi},H)$ and $(\widehat{\pi_{1}%
},H_{1})$ are Hilbert realizations of $V$ and $W$ respectively. The
corresponding $C^{\infty}$ vectors are therefore respectively $\overline
{\overline{V}}$ and $\overline{\overline{W}}$ by Theorem \ref{crux}. If $v\in
H$ then $\lambda_{v}(w)=(v,w)$ defines an element of the conjugate dual. The
restriction of \thinspace$\lambda_{v}$, $v\in H$ to $H_{1}$ yields a Hilbert
representation surjection of $(\widehat{\pi},H)$ to $(\widehat{\pi_{1}}%
,H_{1})$ taking $C^{\infty}$ vectors completes the proof.

\subsubsection{Appendix: A proof of Lemma \ref{regularity}}

Recall that we have fixed a form $B$ on $Lie(G)$ given by $B(X,Y)=tr(XY)$. We
also note that $\left\langle X,Y\right\rangle =tr(XY^{T})$ defines an inner
product on $Lie(G)$. Let $C$ and $C_{K}$ be the Casimir operators of $G$ and
$K$ respectively corresponding to $B$ and we set $\Delta=C-2C_{K}$. We observe
that $\Delta=\sum X_{i}^{2}$ for $X_{1},...,X_{m}$ an orthonormal basis of
$Lie(G)$ relative to $\left\langle ...,...\right\rangle $. As a left invariant
operator on $G$, $\Delta$ is an elliptic and bi--invariant under $K$. Let
$(\pi,H)$ be a Hilbert representation of $G$ and set $V=\left(  H^{\infty
}\right)  _{K}$ . Let $Z$ be the completion of $V$ relative to the seminorms
$q_{l}(v)=\left\Vert \Delta^{l}v\right\Vert ,l=0,1,2,...$ Then since
$q_{0}=\left\Vert ...\right\Vert $, $Z$ can be looked upon as a subspace of
$H$. Also $H^{\infty}$ is the completion of $V$ using the seminorms
$s_{x}(v)=\left\Vert xv\right\Vert $ with $x\in U(\mathfrak{g})$. Thus
$Z\supset H^{\infty}$.

\begin{lemma}
\label{roe}$Z=H^{\infty}$. Furthermore, the topology on $H^{\infty}$ is given
by the semi-norms $q_{l}$.
\end{lemma}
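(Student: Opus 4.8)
The plan is to establish $Z = H^{\infty}$ by a standard elliptic regularity argument applied to the bi-$K$-invariant elliptic operator $\Delta = \sum X_i^2$, and then deduce that the $q_l$ seminorms already generate the full $C^{\infty}$ topology. First I would fix $v \in Z$; by definition the sequence $\Delta^l v$ lies in $H$ for every $l$. The key observation is that, since $\Delta$ is an elliptic left-invariant differential operator on $G$, for any $x \in U(\mathfrak{g})$ of order $\le 2N$ there is an estimate of the form $\|x w\| \le C(\|w\| + \|\Delta^N w\|)$ valid for all $w \in V$ (this is the a priori estimate / Sobolev-type inequality for $\Delta$; one gets it either from the scalar elliptic estimate on $G$ applied to matrix coefficients, or from the fact that $(1+\Delta)^{-N}$ smooths to order $2N$). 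Applying this with $w$ ranging over the approximating sequence for $v$ in the $q_l$-topology shows that $xv \in H$ as well, hence $v \in H^{\infty}$; combined with the trivial inclusion $Z \supset H^{\infty}$ noted before the lemma, this gives $Z = H^{\infty}$ as sets.

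Next I would upgrade this to a statement about topologies. The inclusion map $(H^{\infty}, \{s_x\}) \hookrightarrow (Z, \{q_l\})$ is continuous since each $q_l = s_{\Delta^l}$ is one of the defining seminorms of $H^{\infty}$. Both spaces are Fréchet (they are closed subspaces of countable products of copies of $H$, being completions), so by the open mapping theorem it suffices to know the map is a bijection, which we have just shown. Hence the $q_l$ already define the topology of $H^{\infty}$, proving the second assertion.

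Finally, to recover Lemma \ref{regularity} itself one writes $\Delta = C - 2C_K$. On the $K$-isotypic component $H(\gamma)$ the operator $C_K$ acts by the scalar $\mu(\gamma)$, and in the situation of Lemma \ref{regularity} (or more generally whenever $C$ acts by a scalar on $H^{\infty}$, as it does on the irreducible-type pieces built in the construction) $\Delta$ differs from $-2C_K$ by a bounded operator on each isotypic component, uniformly enough that the seminorms $\|\Delta^l v\|$ and $\|(1+C_K)^l v\|_1$ are mutually dominated up to constants depending only on $l$. Comparing the two families of seminorms then transfers the regularity statement just proved to the $(1+C_K)^l$ form. The main obstacle I anticipate is making the elliptic a priori estimate for $\Delta$ rigorous in the Banach-representation setting rather than for scalar functions — i.e.\ passing from the classical interior elliptic estimate on $C^{\infty}(G)$ to the operator bound $\|xw\| \lesssim \|w\| + \|\Delta^N w\|$ on $H$; the cleanest route is probably to test against vectors in $\widehat{H}{}^{\infty}$, reducing everything to generalized matrix coefficients, which are genuine scalar functions on $G$ to which the classical estimate applies.
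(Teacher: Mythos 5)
Your proposal is correct in substance, but it runs along a genuinely different track from the paper's proof. You go through a quantitative a priori bound $\Vert xw\Vert \le C_{x}\left( \Vert w\Vert +\Vert \Delta ^{N}w\Vert \right) $ for $w\in H^{\infty}$ (a Goodman--Poulsen style inequality), which, as you note at the end, is proved by pairing with $u\in H$ and applying the classical interior elliptic estimate together with Sobolev embedding to the scalar coefficient $g\mapsto (\pi (g)w,u)$ on a fixed compact neighborhood of the identity, using local boundedness of $\Vert \pi (g)\Vert $; this only controls $x$ of order roughly $2N-\dim G/2$ rather than $2N$, but that is harmless. Granted the estimate, a $q_{l}$--Cauchy sequence in $V$ is $s_{x}$--Cauchy for every $x$, so completeness of $H^{\infty}$ gives $Z\subset H^{\infty}$ (this is the correct phrasing of your ``$xv\in H$'' step, since $xv$ is undefined before $v$ is known to be smooth); moreover the estimate dominates each $s_{x}$ on $V$ by finitely many $q_{l}$, so the topological assertion actually follows directly, without even invoking the open mapping theorem. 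The paper argues more softly: for $v\in Z$ it establishes the distributional identities $\Delta ^{k}(\pi (\cdot )v,w)=(\pi (\cdot )u_{k},w)$ with $u_{k}\in H$, concludes from local elliptic regularity that every matrix coefficient of $v$ is $C^{\infty}$, then quotes Schwartz's theorem that a weakly $C^{\infty}$ Hilbert-space-valued map is strongly $C^{\infty}$ to get $v\in H^{\infty}$, and obtains the topology statement from the closed graph theorem. Your route buys explicit seminorm comparisons; the paper's avoids proving any uniform operator estimate at the price of the weak-to-strong smoothness input.

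Two cautions. First, your alternative derivation via ``$(1+\Delta )^{-N}$ smooths to order $2N$'' is not available here: $d\pi (\Delta )$ carries no functional calculus for a non-unitary Hilbert representation, so the matrix-coefficient argument you sketch is the one to use. Second, your closing remarks about Lemma \ref{regularity} misstate the paper's reduction: there $C$ does not act by a scalar; the paper instead uses that the $v_{i}$ span a $K$-- and $Z(\mathfrak{g})$--invariant space to obtain $\Vert Cv\Vert _{1}\le nA\Vert v\Vert _{1}$ and then expands $\Delta ^{k}=\sum_{j}(-2)^{j}\binom{k}{j}C^{k-j}C_{K}^{j}$. That deduction, however, lies outside the statement you were asked to prove.
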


\begin{proof}
We note that the second assertion is a direct consequence of the closed graph
theorem (c.f. [T]) and the first assertion. We will now prove the first
assertion. Let $v\in Z\subset H$. We must prove that $v\in H^{\infty}$. Let
$v_{j}\in V$ be a sequence converging to $v$ in the topology of $Z$. Let $w\in
H$ then for all $j$%
\[
\Delta^{k}(\pi(g)v_{j},w)=(\pi(g)\Delta^{k}v_{j},w).
\]
Furthermore, since $q_{l}(\Delta^{k}v)=q_{l+k}(v)$ and $q_{l}(v)\leq
q_{l+1}(v)$, we see that for fixed $k$ the sequence $\{\Delta^{k}v_{j}\}_{j}$
converges to $u_{k}$ in $Z$.

We assert that the function $g\mapsto(\pi(g)v,w)$ is $C^{\infty}$. Since $w\in
H$ is arbitrary this would imply that the map $g\mapsto\pi(g)v$ is weakly
$C^{\infty}$. But a weakly $C^{\infty}$ map of a finite dimensional manifold
into a Hilbert space is strongly $C^{\infty}$(c.f. [S]). This is exactly the
statement that $v$ is a $C^{\infty}$ vector. We now prove the assertion. We
first show that if we look upon the continuous function $h(g)=(\pi(g)v,w)$ as
a distribution on $G$ (using the Haar measure on $G)$ then in the distribution
sense
\[
\Delta^{k}h(g)=(\pi(g)u_{k},w).
\]
Indeed, let $f\in C_{c}^{\infty}(G)$ then%
\[
\int_{G}h(g)\Delta^{k}f(g)dg=\lim_{j\rightarrow\infty}\int_{G}(\pi
(g)v_{j},w)\Delta^{k}f(g)dg=
\]%
\[
\lim_{j\rightarrow\infty}\int_{G}\Delta^{k}(\pi(g)v_{j},w)f(g)dg=\lim
_{j\rightarrow\infty}\int_{G}(\pi(g)\Delta^{k}v_{j},w)f(g)dg=
\]%
\[
\int_{G}(\pi(g)u_{k},w)f(g)dg
\]
as asserted. Since $\Delta$ is elliptic, local Sobelev theory (c.f. [F,
Chapter 6]) implies that $h\in C^{\infty}(G)$.
\end{proof}

We will now assume that $V\in\mathcal{H}(\mathfrak{g},K)$ and give a proof of
Lemma \ref{regularity}. We note that
\[
\Delta^{k}=\sum_{j=0}^{k}(-2)^{j}\binom{k}{j}C^{k-j}C_{K}^{j}.
\]
If $v\in\widehat{V}$ then
\[
\left\Vert Cv\right\Vert _{1}^{2}=\sum_{i=1}^{n}\int_{G}|(v_{i},\widehat{\pi
}(g)d\widehat{\pi}(C)v)|^{2}\left\Vert g\right\Vert ^{-2d-d_{o}}=
\]%
\[
\sum_{i=1}^{n}\int_{G}|(d\pi(C)v_{i},\widehat{\pi}(g)v)|^{2}\left\Vert
g\right\Vert ^{-2d-d_{o}}.
\]
Now $d\pi(C)v_{i}=\sum a_{ji}v_{j}$ hence $(d\pi(C)v_{i},\widehat{\pi
}(g)v)=\sum a_{ji}(v_{j},\widehat{\pi}(g)v).$Hence setting $A=\max
_{ij}\{|a_{ij}|\}$
\[
|(d\pi(C)v_{i},\widehat{\pi}(g)v)|=|\sum_{j}a_{ji}(v_{j},\widehat{\pi
}(g)v)|\leq A\sum_{j}\left\vert (v_{j},\widehat{\pi}(g)v)\right\vert
\]
so%
\[
\sum_{i}(d\pi(C)v_{i},\widehat{\pi}(g)v)|^{2}\leq nA^{2}\left(  \sum
_{j}\left\vert (v_{j},\widehat{\pi}(g)v)\right\vert \right)  ^{2}\leq
\]%
\[
n^{2}A^{2}\sum_{i}|(v_{i},\widehat{\pi}(g)v)|^{2}.
\]
Thus%
\[
\left\Vert Cv\right\Vert _{1}\leq nA\left\Vert v\right\Vert _{1}.
\]
Set $B=nA.$Then%
\[
\left\Vert \Delta^{k}v\right\Vert _{1}=\left\Vert \sum_{j=0}^{k}(-2)^{j}%
\binom{k}{j}C^{k-j}C_{K}^{j}v\right\Vert _{1}\leq
\]%
\[
\sum_{j=0}^{k}(2)^{j}\binom{k}{j}B^{k-j}\left\Vert C_{K}^{j}v\right\Vert
_{1}\leq\sum_{j=0}^{k}(2)^{j}\binom{k}{j}B^{k-j}\left\Vert (1+C_{K}%
)^{j}v\right\Vert _{1}%
\]

This completes the proof of Lemma 14. 

We conclude this appendix with a result about general Hilbert representations.

\begin{proposition}
\label{C-K}If $(\pi,H)$ is a Hilbert representation of $G$ such that $C$ acts
as the scalar $c$ on $H^{\infty}$ then topology of $H^{\infty}$ is given by
the semi-norms $p_{l}(v)=\left\Vert (I+C_{K})^{l}v\right\Vert ,l=0,1,2,...$
\end{proposition}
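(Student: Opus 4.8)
The plan is to reduce this to Lemma \ref{roe}, which already tells us that the topology on $H^{\infty}$ is given by the seminorms $q_l(v) = \|\Delta^l v\|$ where $\Delta = C - 2C_K$. Since $C$ acts by the scalar $c$ on $H^{\infty}$, on that space we have the identity $\Delta = c - 2C_K$, so $\Delta^l = (c - 2C_K)^l$ is a polynomial of degree $l$ in $C_K$ with constant coefficients. Expanding by the binomial theorem, $\Delta^l = \sum_{j=0}^{l} \binom{l}{j} c^{l-j}(-2)^j C_K^j$, which exhibits each $q_l$ as bounded by a finite linear combination of the $p_j = \|(I+C_K)^j\,\cdot\,\|$ for $j \le l$; this gives one direction of the comparison of seminorm systems.

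For the reverse direction I would solve for $C_K^l$ in terms of the $\Delta^j$. Since $C_K = (c - \Delta)/2$ on $H^{\infty}$, we get $C_K^l = 2^{-l}(c-\Delta)^l = 2^{-l}\sum_{j=0}^{l}\binom{l}{j}c^{l-j}(-1)^j \Delta^j$, so $\|C_K^l v\|$ is dominated by a finite linear combination of $q_0(v),\dots,q_l(v)$. Hence $\|(I+C_K)^l v\| \le \sum_{m=0}^{l}\binom{l}{m}\|C_K^m v\|$ is also dominated by $q_0(v),\dots,q_l(v)$, i.e. $p_l$ is continuous in the $q$-topology. Combining the two directions, the family $\{p_l\}$ and the family $\{q_l\}$ define the same topology on $H^{\infty}$, and by Lemma \ref{roe} that topology is the intrinsic Fréchet topology of $H^{\infty}$.

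I do not anticipate a genuine obstacle here: the argument is purely the observation that, once $C$ is scalar, $C_K$ and $\Delta$ generate the same (polynomial in one variable) algebra of operators with mutually comparable norms, so their induced seminorm systems are equivalent. The only point requiring a little care is the passage from "$q_l$ comparable to $p_l$ as families" to "same topology," which is just the standard fact that two countable families of seminorms each dominated (up to constants and finitely many members) by the other define the same Fréchet topology; and of course the essential input, that $\{q_l\}$ already gives the correct topology on $H^{\infty}$, is exactly the content of Lemma \ref{roe} applied to the present $(\pi,H)$.
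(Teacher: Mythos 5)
Your proposal is correct and takes essentially the same route as the paper: reduce to Lemma \ref{roe} (whose proof never uses admissibility) and use $\Delta=C-2C_{K}=c-2C_{K}$ on $H^{\infty}$, expanded binomially, to compare the seminorm families $\{q_{l}\}$ and $\{p_{l}\}$. The only difference is that you also prove the reverse estimate bounding $p_{l}$ by the $q_{j}$, which the paper omits because $(I+C_{K})^{l}\in U(\mathfrak{g})$ makes each $p_{l}$ automatically continuous for the intrinsic topology of $H^{\infty}$; this extra step is harmless.
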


\begin{proof}
Notice that in the proof of Lemma \ref{roe} the assumption of admissibility is
never used. Assume that $C$ acts by $c$ on $H^{\infty}$. So if $v\in
H^{\infty}$%
\[
\left\Vert \Delta^{k}v\right\Vert =\left\Vert \sum_{j=0}^{k}(-2)^{j}\binom
{k}{j}C^{k-j}C_{K}^{j}v\right\Vert _{{}}\leq
\]%
\[
\sum_{j=0}^{k}(2)^{j}\binom{k}{j}|c|^{\substack{\\k-j}}\left\Vert C_{K}%
^{j}v\right\Vert \leq\sum_{j=0}^{k}(2)^{j}\binom{k}{j}|c|^{k-j}\left\Vert
(1+C_{K})^{j}v\right\Vert .
\]

\end{proof}

We leave it to the reader to prove the analogous result with $C$ satisfying
$p(C)=0$ on $H^{\infty}$, $p(x)\in\mathbb{C}[x]$.

\subsection{Step 3: All objects in $\mathcal{H}(\mathfrak{g},K)$ are good.}

We first give some functorial properties of goodness (see for the proofs
[RRGII,11.7.2], [C,7.16]] Casselman uses the term regular for our good and
uses a single bar for our double bar completion and vice-versa). Recall that
good means $\overline{V}$ and $\overline{\overline{V}}$ are isomorphic as
elements of $\mathcal{HF}_{\operatorname{mod}}(G)$.

\begin{lemma}
\label{functorial}Let $V\in\mathcal{H}(\mathfrak{g},K)$

1. If $V$ is good then $\widehat{V}$ is good

2. If $V$ is good and if $W$ is a summand of $V$ then $W$ is good.

3. If $V\in\mathcal{H}(\mathfrak{g},K)$ and if every irreducible subquotient
of $V$ is good then $V$ is good.
\end{lemma}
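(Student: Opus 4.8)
The plan is to handle the three parts in turn, using throughout that $V\mapsto\overline V$ and $V\mapsto\overline{\overline V}$ are functors $\mathcal{H}(\mathfrak{g},K)\to\mathcal{HF}_{\operatorname{mod}}(G)$, the former left exact and the latter right exact, and that a continuous $G$--intertwiner between two objects of $\mathcal{HF}_{\operatorname{mod}}(G)$ is determined by its restriction to the dense subspace of $K$--finite vectors. Consequently, for every $V$ there is a canonical $G$--map $\Phi_V\colon\overline{\overline V}\to\overline V$ restricting to the identity of $V$ on $K$--finite vectors: its existence comes from part $2$ of Theorem \ref{left} with $X=\overline{\overline V}$ and $T=\mathrm{id}_V$ (corrected by the induced automorphism of $V$), while uniqueness and naturality in $V$ follow from density of $V$ in both spaces; and $V$ is good precisely when $\Phi_V$ is an isomorphism.

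For part 1 I would apply Theorem \ref{crux} a second time, now to $\widehat V$. Let $(\pi,H)$ be the Hilbert representation attached to $V$ in Lemma 11, so $H_K^\infty\cong V$, $(\widehat\pi,\widehat H^\infty)\cong\overline{(\widehat V)}$, and hence, by Theorem \ref{crux}, $(\pi,H^\infty)\cong\overline{\overline V}$. Now feed the module $\widehat V$ and the Hilbert representation $(\widehat\pi,\widehat H)$ into Theorem \ref{crux}: since $\widehat{\widehat\pi}=\pi$, $\widehat{\widehat H}=H$ and $\widehat{\widehat V}=V$, the theorem's hypothesis, namely that the $C^\infty$--vectors of the conjugate dual of $\widehat H$ form an object of $\mathcal{HF}_{\operatorname{mod}}(G)$ isomorphic with $\overline{(\widehat{\widehat V})}$, reads $(\pi,H^\infty)\cong\overline V$, which holds because $V$ is good. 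So $(\widehat\pi,\widehat H^\infty)$ satisfies the property characterizing $\overline{\overline{\widehat V}}$; but it is $\cong\overline{(\widehat V)}$ by Lemma 11, whence $\overline{(\widehat V)}\cong\overline{\overline{\widehat V}}$, i.e.\ $\widehat V$ is good.

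For part 2, write $V=W\oplus W'$ with inclusion $i\colon W\to V$ and projection $p\colon V\to W$, so $pi=\mathrm{id}_W$, and put $e=ip$. Applying $\overline{(\cdot)}$, the relation $\overline p\,\overline i=\mathrm{id}_{\overline W}$ makes $\overline i$ a split monomorphism, so $\overline e=\overline i\,\overline p$ is an idempotent of $\overline V$ with closed image $\overline i(\overline W)\cong\overline W$ (in fact the closure of $W$ in $\overline V$, cf.\ Corollary \ref{needed}); likewise $\overline{\overline e}$ is an idempotent of $\overline{\overline V}$ with image $\cong\overline{\overline W}$. Since $V$ is good, $\Phi_V$ is an isomorphism, and naturality applied to $e$ gives $\Phi_V\circ\overline{\overline e}=\overline e\circ\Phi_V$; hence $\Phi_V$ restricts to an isomorphism $\mathrm{im}(\overline{\overline e})\to\mathrm{im}(\overline e)$, that is, $\overline{\overline W}\cong\overline W$, which is goodness of $W$.

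For part 3, an admissible finitely generated $(\mathfrak{g},K)$--module has finite length, so I would induct on the length of $V$, the length one case being the hypothesis. Given the inductive hypothesis, choose an exact sequence $0\to V'\to V\to V''\to 0$ with $V'$, $V''$ of smaller length, hence good, since their irreducible subquotients occur among those of $V$. Left exactness of $\overline{(\cdot)}$, right exactness of $\overline{\overline{(\cdot)}}$ and naturality of $\Phi$ produce a commuting diagram with rows
\[
\overline{\overline{V'}}\longrightarrow\overline{\overline V}\longrightarrow\overline{\overline{V''}}\longrightarrow 0,\qquad 0\longrightarrow\overline{V'}\longrightarrow\overline V\longrightarrow\overline{V''},
\]
and vertical maps $\Phi_{V'},\Phi_V,\Phi_{V''}$, the outer two being isomorphisms. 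The decisive point is to upgrade both rows to short exact sequences: composing $\overline{\overline{V'}}\to\overline{\overline V}$ with $\Phi_V$ equals $\Phi_{V'}$ followed by the injection $\overline{V'}\hookrightarrow\overline V$, forcing the top left arrow to be injective; dually, composing the surjection $\overline{\overline V}\to\overline{\overline{V''}}$ with $\Phi_{V''}$ forces $\overline V\to\overline{V''}$ to be surjective. A short five lemma chase on the underlying Fr\'echet spaces then exhibits $\Phi_V$ as a continuous bijection, hence an isomorphism by the open mapping theorem, and $V$ is good. I expect this exactness bookkeeping to be the main obstacle: $\mathcal{HF}_{\operatorname{mod}}(G)$ is not abelian, so one must verify by hand both that the canonical vertical maps exist and are natural and that the one--sided exactness of the two functors bootstraps to two genuine short exact sequences, and it is exactly here that goodness of $V'$ and $V''$ is used essentially rather than cosmetically.
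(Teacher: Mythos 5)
Your argument is correct, but note that for this lemma the paper itself gives no proof at all: it simply cites [RRGII, 11.7.2] and [C, 7.16]. Your reconstruction follows the same general lines as those references (duality for part 1, transport of idempotents for part 2, induction on length for part 3), and it is carried out entirely with the tools this paper does supply: Lemma 11, Theorem \ref{crux} together with the corollary characterizing the minimal completion by $Z_{|Z_K}^{\prime}=(Z_K)_{\operatorname{mod}}^{\ast}$, Corollary \ref{needed}, and the asserted strong functoriality and one-sided exactness of $V\mapsto\overline{V}$ and $V\mapsto\overline{\overline{V}}$. Two points deserve emphasis. First, your part 1 is a genuinely clean way to do it: feeding $\widehat{V}$ and the conjugate-dual Hilbert realization $(\widehat{\pi},H)$ back into Theorem \ref{crux}, with the hypothesis $(\pi,H^{\infty})\cong\overline{(\widehat{\widehat{V}})}=\overline{V}$ supplied exactly by goodness of $V$, and then invoking the uniqueness characterization, is faithful to how the duality statement is proved in the references. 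Second, you were right to build the comparison map $\Phi_V$ only from part 2 of Theorem \ref{left} (the automatic-continuity side) and to characterize goodness as invertibility of $\Phi_V$; invoking the extension property of Theorem \ref{minimal} with $W$ an arbitrary Fr\'echet globalization at this stage would beg the question, and you avoid that. The remaining reliance on facts the paper asserts without proof (that $\overline{(\cdot)}$ and $\overline{\overline{(\cdot)}}$ extend morphisms compatibly with their restriction to $K$--finite vectors, and their left/right exactness in the vector-space sense needed for your five-lemma chase, together with finiteness of length of admissible finitely generated modules) is legitimate within the paper's framework, and your explicit derivation of injectivity of $\overline{\overline{V'}}\to\overline{\overline{V}}$ and surjectivity of $\overline{V}\to\overline{V''}$ from goodness of $V'$ and $V''$ is exactly the point where the bookkeeping has real content.
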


We also have (see [RRGII,11.7.3])

\begin{lemma}
Let $Q=L_{Q}N_{Q}$ be a standard parabolic subgroup of $G$(i.e. $Q\supset P$)
with $L_{Q}=Q\cap\theta(Q)$ and $N_{Q}$ the unipotent radical of $Q$. Then if
$W$ is a good object in $\mathcal{H}(Lie(L_{Q}),K\cap L_{Q})$ and $N_{Q}$ acts
locally finitely then the corresponding induced $(\mathfrak{g},K)$--module is good.
\end{lemma}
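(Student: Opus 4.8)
The plan is to combine the characterization of the minimal completion via $V_{\operatorname{mod}}^{\ast}$ (the Corollary after Theorem~\ref{crux}, together with Lemma~11) with a direct analysis of induced representations. Concretely, it suffices to show that if $W$ is a good object in $\mathcal{H}(Lie(L_{Q}),K\cap L_{Q})$ on which $N_Q$ acts locally finitely, then the induced module $V = \operatorname{ind}_{Q}^{G}(W)$ satisfies $\overline{V}\cong\overline{\overline{V}}$ in $\mathcal{HF}_{\operatorname{mod}}(G)$. The left-exact side is easy: $\operatorname{ind}_{Q}^{G}$ applied to a smooth Fr\'echet globalization of $W$ of moderate growth produces a smooth Fr\'echet $G$-module of moderate growth with the right $K$-finite vectors, and by the universal property in Theorem~\ref{left} there is a canonical map from this induced globalization to $\overline{V}$; in fact one should check this induced globalization \emph{is} $\overline{V}$, i.e. $\overline{\operatorname{ind}_{Q}^{G}(W)} = \operatorname{ind}_{Q}^{G}(\overline{W})$, using that parabolic induction (from a finite-dimensional, or more generally moderate-growth, representation) preserves the property that $C^{\infty}$ vectors are $K$-$C^{\infty}$ vectors, as was already invoked in the proof of Lemma~11.

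The heart of the matter is then to identify the continuous dual restricted to $K$-finite vectors. I would use the definition of $\overline{\overline{V}}$ as the unique object $Z$ with $Z_{|Z_K}^{\prime}=(Z_K)_{\operatorname{mod}}^{\ast}$. So the goal becomes: show that for the induced globalization $X=\operatorname{ind}_{Q}^{G}(\overline{W})$ one has $X_{|V}^{\prime}=V_{\operatorname{mod}}^{\ast}$. The inclusion $X_{|V}^{\prime}\subseteq V_{\operatorname{mod}}^{\ast}$ holds for any object of $\mathcal{HF}_{\operatorname{mod}}(G)$. For the reverse inclusion, given $\lambda\in V_{\operatorname{mod}}^{\ast}$, the associated functions $f_{\lambda,v}\in\mathcal{A}_{\operatorname{mod}}(G)$ should be restricted (or, via an adjunction/Frobenius-type argument, pushed down) to the Levi $L_Q$: because $N_Q$ acts locally finitely on $W$ and $Q=L_QN_Q$, a moderate-growth functional on the induced module corresponds, after integrating out the $N_Q$-directions and using the $MA$-asymptotics technology already cited in Section~4.1, to a functional on $W$ lying in $W_{\operatorname{mod}}^{\ast}$ for $L_Q$. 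Since $W$ is good, $W_{\operatorname{mod}}^{\ast}=\overline{W}_{|W}^{\prime}$, so this functional extends continuously to $\overline{W}$, and hence $\lambda$ extends continuously to $\operatorname{ind}_{Q}^{G}(\overline{W})=X$. This gives $V_{\operatorname{mod}}^{\ast}\subseteq X_{|V}^{\prime}$, so $X$ has the defining property of $\overline{\overline{V}}$; combined with $X=\overline{V}$ from the previous paragraph, $\overline{V}\cong\overline{\overline{V}}$, i.e. $V$ is good.

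The step I expect to be the main obstacle is the passage from a moderate-growth functional on the induced module to one on $W$ over the Levi --- precisely controlling how the growth estimate on $f_{\lambda,v}$ (which is an estimate in the $\|g\|$-norm on $G$) restricts to an estimate of moderate-growth type on $L_Q$, and checking the $Z(Lie(L_Q))$-finiteness and $K\cap L_Q$-finiteness conditions needed to land in $\mathcal{A}_{\operatorname{mod}}(L_Q)$. This is where the local finiteness of the $N_Q$-action is essential: it guarantees that the ``constant term along $N_Q$'' is the whole story, so that restricting functions on $G$ to $L_Q$ (after the standard unipotent integration) does not lose information, and the asymptotic-expansion estimates along $A_Q$ transfer the growth condition. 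I would handle this by reducing, via Lemma~\ref{functorial}(3), to the case where $W$ is irreducible, and then invoking the Langlands classification together with the structure of $\operatorname{ind}_Q^G$ of a Langlands quotient data, so that the needed dual-space identity reduces to the already-established Theorem~\ref{crux} for the inducing data. An alternative, perhaps cleaner, route is to prove directly that $\operatorname{ind}_Q^G$ is compatible with the Hilbert-space construction of Lemma~11: starting from a Hilbert $(\pi,H)$ for $W$ with $(\widehat\pi,\widehat H^{\infty})\cong\overline{\widehat W}$, the induced Hilbert representation $\operatorname{ind}_Q^G(H)$ is again a Hilbert globalization of $V$ whose conjugate dual smooth vectors realize $\overline{\widehat V}$ (using that parabolic induction commutes with conjugate duality up to the modular twist, as in the proof of Lemma~11), and then Theorem~\ref{crux} applies verbatim to yield $\operatorname{ind}_Q^G(H)^{\infty}=\overline{\overline V}$; since that same space is visibly $\overline V$ by the left-exact discussion, $V$ is good.
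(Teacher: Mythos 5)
The paper itself gives no proof of this lemma --- it simply cites [RRGII, 11.7.3] --- so the comparison must be with the argument there, and your closing ``alternative route'' is in fact essentially that argument: one shows that the smooth induction of a good globalization of $W$ is simultaneously the maximal completion $\overline{V}$ of the induced module $V$ and, via the conjugate-dual criterion of Theorem~\ref{crux} applied to the induced Hilbert representation, the minimal completion $\overline{\overline{V}}$. But as written, both of your routes leave the real content unaccounted for. The step you call ``easy''/``visibly $\overline{V}$'' --- that $\operatorname{Ind}_Q^G(\overline{W})^{\infty}\cong\overline{V}$, equivalently that the conjugate-dual smooth vectors of $\operatorname{Ind}_Q^G(H)$ realize $\overline{(\widehat{V})}$ --- is precisely the heart of the matter. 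The universal property in Theorem~\ref{left} only produces a morphism into $\overline{V}$; to see it is an isomorphism one needs the automatic continuity theorem (the extension theorem quoted in subsection 4.1 is stated only for induction from the minimal parabolic $P$ of finite-dimensional data) together with an induction-in-stages comparison between $\operatorname{Ind}_Q^G(\overline{W})^{\infty}$ and the spaces $\operatorname{Ind}_P^G(V/\mathfrak{n}^kV)^{\infty}$ in which $\overline{V}$ is constructed --- and it is exactly here, not in a constant-term argument, that the hypothesis that $N_Q$ acts locally finitely on $W$ is needed.

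The route you develop in most detail has a genuine gap. A functional $\lambda\in V_{\operatorname{mod}}^{\ast}$ on the induced module does not ``correspond, after integrating out the $N_Q$-directions, to a functional on $W$'': the continuous dual of an induced representation is again an induced object (as in the proof of Lemma~11, the conjugate dual of $\operatorname{Ind}_P^G(F)$ is $\operatorname{Ind}_P^G(\widehat{F}\otimes\delta_P)$), i.e.\ a space of generalized sections over $K/(K\cap Q)$, not a single functional on the inducing module; so there is no push-down producing an element of $W_{\operatorname{mod}}^{\ast}$ whose extension recovers $\lambda$, and no argument is offered that the $Z(Lie(L_Q))$-finiteness, $K\cap L_Q$-finiteness and growth conditions would be preserved. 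The proposed reduction to irreducible $W$ via Lemma~\ref{functorial}(3) also does not go through: that lemma deduces goodness from goodness of all irreducible subquotients, but at this stage of the proof goodness is not known to pass to subquotients (neither of $W$ nor of $\operatorname{Ind}_Q^G(W_i)$), and invoking the Langlands classification inside this lemma is circular in spirit, since the lemma (via temperedness) is an input to the Langlands-quotient end game. So the correct skeleton is your second sketch, but the two identifications it treats as formal are where the proof actually lives.
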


This combined with the proof that square integrable representations are good (
[RRGII,11.7.4]) implies that tempered representations are good,

Finally we defer to Casselman's much cleaner version of the end game
([C,Section 9]).which involves the Langlands quotient theorem and a
deformation argument due to him (you can also see a repetition of the argument
in [RRGII] in [BK]). So

\begin{theorem}
If $V\in\mathcal{H}(\mathfrak{g},K)$ then $V$ is good.
\end{theorem}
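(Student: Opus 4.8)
The plan is to prove that every $V\in\mathcal{H}(\mathfrak{g},K)$ is good by an induction on the length of $V$, reducing in several stages to increasingly restricted classes of representations until we reach the tempered case, and then invoking the Langlands quotient theorem together with Casselman's deformation argument. The organizing principle is Lemma~\ref{functorial}: by part~3 it suffices to prove that every irreducible $(\mathfrak{g},K)$--module is good, and by parts~1 and~2 the class of good modules is closed under conjugate duals and under passage to direct summands, which will let us freely move between a module and its constituents inside induced representations.

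First I would dispose of the tempered case. Square integrable (discrete series) representations are shown to be good in [RRGII,11.7.4]; I would take that as input. Combining this with the induction lemma for parabolic induction stated just above (if $W$ is good in $\mathcal{H}(Lie(L_Q),K\cap L_Q)$ and $N_Q$ acts locally finitely then the induced module is good), one gets that any representation unitarily induced from a discrete series of a Levi factor is good, hence all tempered representations are good. Next, for a general irreducible $V$, the Langlands classification writes $V$ as the unique irreducible quotient (the Langlands quotient) of a standard module $I_{Q,\sigma,\nu}$, where $\sigma$ is tempered on the Levi $L_Q$ and $\nu$ lies in the open positive chamber. The full induced module $I_{Q,\sigma,\nu}$ is good because $\sigma$ is good (tempered) and induction preserves goodness; the issue is to pass from goodness of the standard module to goodness of its irreducible quotient, and more generally of the irreducible submodule of the ``opposite'' standard module.

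The hard part, and the step I expect to be the genuine obstacle, is precisely this passage from the standard module to the Langlands quotient. Here is where Casselman's deformation argument enters, and where I would simply defer to [C,Section~9] rather than reprove it: one deforms the continuous parameter $\nu$ within the chamber and uses that the standard intertwining operator $I_{Q,\sigma,\nu}\to I_{\bar Q,\sigma,\nu}$ between a standard module and its opposite is, for generic $\nu$, an isomorphism onto the globalization whose $K$--finite vectors realize the Langlands quotient; goodness, being detected on the dual side via the characterization $Z'_{|V}=V^*_{\operatorname{mod}}$ (Theorem~\ref{crux} and its corollary), is preserved under this deformation because the relevant continuous functionals vary holomorphically and the equality of the two completions is an open and closed condition along the deformation. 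The subtlety that makes this delicate is that the intertwining operator can degenerate at bad $\nu$, so one must know that goodness at a single good value of the parameter, propagated by Lemma~\ref{functorial}(2), forces goodness at all values; this is exactly the content of Casselman's end--game, and [C,Section~9] (equivalently the repetition in [BK]) carries it out carefully. With every irreducible thus shown good, Lemma~\ref{functorial}(3) upgrades this to all of $\mathcal{H}(\mathfrak{g},K)$, completing the proof and hence the proof of the Casselman--Wallach theorem.
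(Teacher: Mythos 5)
Your proposal follows essentially the same route as the paper: reduce to irreducibles via Lemma \ref{functorial}, get temperedness from goodness of square integrable representations [RRGII,11.7.4] plus the parabolic induction lemma, and defer the passage from standard modules to Langlands quotients to Casselman's deformation argument in [C, Section 9] — which is exactly what the paper does, since its own treatment of this theorem is a sketch citing the same ingredients. Your parenthetical gloss on why the deformation preserves goodness is your own (somewhat heuristic) addition, but since both you and the paper ultimately defer that step to [C], the structure of the argument is the same.
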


\section{Adding parameters}

\subsection{Some results of van der Noort}

Let $\Omega$ be an open subset of $\mathbb{C}^{n}$ then following [vdN] we
will define a holomorphic family of objects in $\mathcal{H}(\mathfrak{g},K)$
in terms of the two conditions below. Before we can give the second condition
we will need to give a consequence of the first. Let $V$ be a $(Lie(K)\otimes
\mathbb{C},K)-$module and let $\pi:\Omega\times\mathfrak{g}\rightarrow End(V)$
be such that

1. For each $z\in\Omega$ the operators $\pi(z,X)$ for $X\in\mathfrak{g}$
define a representation of $\mathfrak{g}$ compatible with the $K$--action.

Thus we have a map $\pi:$ $\Omega\times U(\mathfrak{g)}\rightarrow End(V)$.

\begin{lemma}
If $W\subset U(\mathfrak{g})$ is a finite dimensional subspace and $v\in V$
then $\dim Span_{\mathbb{C}}\pi(\Omega,W)v<\infty$.
\end{lemma}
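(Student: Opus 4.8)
The plan is to reduce the statement to a finite-dimensional linear-algebra fact by combining condition 1 with the $(Lie(K)\otimes\mathbb{C},K)$-module structure on $V$, which we are given is admissible in the appropriate sense (each $K$-isotypic component $V(\gamma)$ is finite dimensional). First I would fix $v\in V$; since $v$ lies in a finite sum of $K$-isotypic components, and since $W\subset U(\mathfrak{g})$ is finite dimensional, I may replace $W$ by $U(\mathfrak{g})_{\le m}$ for some $m$ (the span of monomials of degree $\le m$ in a fixed basis of $\mathfrak{g}$) — it suffices to bound $\dim Span_{\mathbb{C}}\pi(\Omega,U(\mathfrak{g})_{\le m})v$. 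The key point is that for each \emph{fixed} $z$, $\pi(z,\cdot)$ makes $V$ into an object of $\mathcal{H}(\mathfrak{g},K)$ (here one needs that it is finitely generated, or at least that $U(\mathfrak{g})_{\le m}v$ meets only finitely many $K$-types with bounded multiplicity — this is where admissibility of $V$ as a $(Lie(K),K)$-module enters, together with the standard Harish-Chandra/PBW argument that $U(\mathfrak{g})_{\le m}v$ is contained in a finite sum of $K$-types determined by the $K$-type of $v$ and by $m$, independently of $z$).

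The crucial observation is that the set of $K$-types appearing in $\pi(z,U(\mathfrak{g})_{\le m})v$ is \emph{independent of }$z$: by the PBW theorem we can write $U(\mathfrak{g})_{\le m} = \sum_{i} u_i \, U(Lie(K)\otimes\mathbb{C})_{\le m}$ for finitely many $u_i$ (using a $\theta$-stable decomposition $\mathfrak{g} = \mathfrak{k}\oplus\mathfrak{p}$ and ordering the $\mathfrak{p}$-factors first), so $\pi(z,U(\mathfrak{g})_{\le m})v$ lies in the span of the $K$-translates of finitely many vectors $\pi(z,u_i)v$. By compatibility of $\pi(z,\cdot)$ with the $K$-action (condition 1, part 2 of the definition of $(\mathfrak{g},K)$-module), each $\pi(z,u_i)v$ lies in the sum of those $K$-isotypic components $V(\gamma)$ with $\gamma$ occurring in $V_{\delta}\otimes(\text{a fixed finite-dim'l }K\text{-module})$ where $\delta$ is the $K$-type of $v$ — and this finite set of $\gamma$'s does not depend on $z$. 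Call the (finite-dimensional) sum of these isotypic components $V_0$. Then $\pi(\Omega, W)v\subset \pi(\Omega, U(\mathfrak{g})_{\le m})v\subset V_0$, and since $\dim V_0<\infty$ we are done.

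The main obstacle I anticipate is making precise the claim that the relevant finite set of $K$-types is genuinely $z$-independent; this rests entirely on the PBW decomposition and on part 2 of the $(\mathfrak{g},K)$-compatibility axiom (which says $\pi(z,X)$ shifts $K$-types according to the adjoint action, uniformly in $z$), so it is really bookkeeping rather than a deep point. The second, milder, issue is that one should note holomorphy of $\pi$ in $z$ has not yet been invoked and is \emph{not needed} for this lemma — the statement is purely about the image being finite-dimensional for each fixed $v$, and uses only condition 1. (Holomorphy will matter for subsequent statements, e.g.\ to conclude that $z\mapsto\pi(z,x)v$ is a holomorphic $V_0$-valued map.) So the proof is short: fix $v$, apply PBW to split off the $\mathfrak{k}$-part, use $K$-equivariance to land in a fixed finite sum of isotypic components, and invoke admissibility of $V$.
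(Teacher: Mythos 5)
Your argument is correct and is essentially the proof in the paper: the crux in both is that, by the $K$-compatibility of $\pi(z,\cdot)$, the image $\pi(\Omega,W)v$ lands in a fixed finite set of $K$-isotypic components independent of $z$, and admissibility of $V$ then gives finite dimensionality (and indeed only condition 1 is used, as you observe). The only difference is cosmetic: the paper skips the PBW factorization by simply enlarging $W$ to an $Ad(K)$-invariant finite-dimensional subspace and noting that the $K$-equivariant map $W\otimes Span_{\mathbb{C}}(Kv)\rightarrow V$ forces the image into the finitely many $K$-types of $W\otimes Span_{\mathbb{C}}(Kv)$.
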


\begin{proof}
We may assume that $W$ is invariant under $Ad(K)$. Let $Z=span_{\mathbb{C}}Kv$
then $\dim Z<\infty$. Let $W\otimes Z=\bigoplus_{\gamma\in S}\left(  W\otimes
Z\right)  (\gamma)$ be its $K$--isotypic decomposition. Here $S$ is the subset
of $\widehat{K}$ consisting of those $\gamma$ such that $\left(  W\otimes
Z\right)  (\gamma)\neq\{0\}$. We note that $S$ is finite. Then 1. implies
that
\[
Span_{\mathbb{C}}\pi(\Omega,W)v\subset\bigoplus_{\gamma\in S}V(\gamma)
\]
which is finite dimensional.
\end{proof}

The second condition (holomorphy) is

2. For each $W\subset U(\mathfrak{g}),L\subset V$ respectively finite
dimensional subspaces the map $\Omega\times W\times L\rightarrow
span_{\mathbb{C}}\pi(\Omega,W)L$ given by $(z,w,u)\mapsto\pi(z,w)u$ is holomorphic.

A triple $(\Omega,\pi,V)$ will be called a holomorphic family of objects in
$\mathcal{H}(\mathfrak{g},K)$ if $\Omega$ is open in $\mathbb{C}^{n}$ for some
$n$, $V$ is an admissible $(Lie(K)\otimes\mathbb{C},K)$--module and $\pi$
satisfies 1. and 2.

\begin{example}
\label{induced}We first note that we can define the same concept for a
standard parabolic subgroup $Q$ of $G$ and $Q\cap K.$ Let $(\Omega,\sigma
,W)$be holomorphic family of $\mathcal{H}(Lie(Q)\otimes\mathbb{C},K\cap
Q)$--modules$.$ We form the $K$--finite induced representation $V=Ind_{M\cap
Q}^{K}(W)_{K}^{\infty}$ and let $\pi(z,X)$ be the action on $Ind_{Q}%
^{G}(\sigma(z,\cdot))_{K}^{\infty}$. Then $(\Omega,\pi,V)$ will be called a
parabolically induced holomorphic family. This includes the parabolically
induced representations $V_{Q,\sigma,\nu}$ with $\sigma$ admissible for
$M_{Q}$ and $\Omega=Lie(A_{Q})_{\mathbb{C}}^{\ast}$.
\end{example}

We will now state one of the main results in [vdN] which is a special case of
his Theorem 3.2.11.

\begin{theorem}
\label{types}Let $(\pi,V,\Omega)$ be a holomorphic family of objects in
$\mathcal{H}(\mathfrak{g},K)$ then for each $z_{o}\in\Omega$ there exists
$U\subset\Omega$ an open neighborhood of $z_{o}$ and $W\subset V$ a finite
dimensional subspace of $V$ such that $\pi(z,U(\mathfrak{g}))W=V$ for all
$z\in U$.
\end{theorem}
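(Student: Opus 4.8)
The plan is to reduce the statement to a noetherian/generic-finite-generation argument carried out on a single nearby point $z_o$, exploiting the admissibility of $V$ together with the holomorphy condition 2. Fix $z_o\in\Omega$. Since $(\pi(z_o,\cdot),V)$ is an object of $\mathcal{H}(\mathfrak{g},K)$, it is finitely generated as a $U(\mathfrak{g})$--module; choose a finite dimensional $K$--stable subspace $W_0\subset V$ with $\pi(z_o,U(\mathfrak{g}))W_0=V$. This is the candidate for $W$ (possibly after enlarging it slightly, see below), and the whole game is to show the generation persists on a neighborhood of $z_o$.

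First I would set up a filtration argument. Let $U_n(\mathfrak{g})$ be the standard PBW filtration, and for each $n$ put $V_n(z)=\pi(z,U_n(\mathfrak{g}))W_0$, a finite dimensional subspace of $V$ whose dimension is an upper semicontinuous (in fact locally constant off a thin set) function of $z$ by the holomorphy in condition 2: the map $(z,x,w)\mapsto\pi(z,x)w$ on $U_n(\mathfrak{g})\times W_0$ is holomorphic with values in the fixed finite dimensional space $\mathrm{Span}_{\mathbb C}\pi(\Omega,U_n(\mathfrak{g}))W_0$ (finite by Lemma~21), so the rank of this family of linear maps can only drop on a proper analytic subset. Hence $\dim V_n(z)\ge\dim V_n(z_o)$ for $z$ in a neighborhood of $z_o$. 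Next, because $V$ is admissible and $W_0$ generates at $z_o$, there is an $N$ with $V_N(z_o)\supseteq V(\gamma)$ for every $K$--type $\gamma$ appearing in $W_0$-generated "low" part — more precisely, for the finite-generation statement it suffices to find $N$ with $V_N(z_o)=V_{N+1}(z_o)$ restricted to each isotypic component up to the relevant truncation; here one uses that $V=\bigcup_n V_n(z_o)$ and each $V(\gamma)$ is finite dimensional, so on any fixed finite set of $K$--types the chain stabilizes. The key point: I would argue that if $\dim V_N(z)=\dim V_N(z_o)$ and $\dim V_{N+1}(z)=\dim V_{N+1}(z_o)$ near $z_o$, and $V_N(z_o)=V_{N+1}(z_o)$, then $V_N(z)=V_{N+1}(z)$ for $z$ near $z_o$ (equality of a nested pair of subspaces of fixed dimensions), and then by the usual induction $V_n(z)=V_N(z)$ for all $n\ge N$, i.e. $\pi(z,U(\mathfrak{g}))W_0=V_N(z)$ for all $z$ near $z_o$. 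It remains to see $V_N(z)=V$; here I would enlarge $W_0$ to $W:=V_N(z_o)$ itself (still finite dimensional, still $K$--stable), so that at $z_o$ we have $\pi(z_o,U(\mathfrak{g}))W=V$ and $W=V_N(z_o)$, and then shrink $U$ so that the stabilization above holds with $W$ in place of $W_0$; then $\pi(z,U(\mathfrak g))W=\pi(z,U_N(\mathfrak g))\cdots$ stabilizes, and one checks that $W\subseteq\pi(z,U(\mathfrak g))W$ forces this stable space to be all of $V$ by a dimension count against the fixed $V(\gamma)$'s — or, cleaner, by noting that admissibility plus the stabilized chain gives a $(\mathfrak g,K)$--submodule which at $z_o$ is everything and whose $\gamma$--multiplicities are lower semicontinuous and sum correctly.

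The main obstacle I anticipate is precisely pinning down why $\pi(z,U(\mathfrak g))W$ is \emph{all} of $V$ and not merely a submodule of the right "size" near $z_o$ — i.e., upgrading "the generated chain stabilizes and has at least the right dimension on each $K$--type" to genuine equality with $V$. The honest way to handle this is to exploit that $V$ itself is admissible and fixed (independent of $z$) as a $(Lie(K)\otimes\mathbb C,K)$--module, so each $V(\gamma)$ is a fixed finite dimensional space; the submodule $\pi(z,U(\mathfrak g))W$ meets $V(\gamma)$ in a subspace of dimension $\ge\dim(V(\gamma))$ (by semicontinuity from $z_o$, where it is everything) — hence equals it — for all $\gamma$ in a \emph{fixed finite set} $S$ (those appearing in $W$ and in $\pi(z_o,U_{N}(\mathfrak g))W$), but one must also control the $K$--types \emph{outside} $S$. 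For that I would invoke that a $(\mathfrak g,K)$--module generated by $W$ has its $K$--types constrained to $S+ (\text{weights of the adjoint action on }U(\mathfrak g))$ in the usual bookkeeping sense, which is the same constraint at every $z$; combined with stabilization of the chain at level $N$, this bounds the generated module's $K$--types by exactly those of $V_N(z_o)=V$. Making this last bookkeeping uniform in $z$ is routine once the semicontinuity of ranks from condition 2 is in hand, so I would expect the proof to be short modulo carefully stating the rank-drop-on-an-analytic-set lemma and the truncated-chain-stabilization lemma.
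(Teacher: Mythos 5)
There is a genuine gap, and it sits exactly at the point you flag as "the main obstacle." Your mechanism for propagating generation from $z_{o}$ to nearby $z$ requires an $N$ with $\pi(z_{o},U_{N}(\mathfrak{g}))W_{0}=\pi(z_{o},U_{N+1}(\mathfrak{g}))W_{0}$, since only genuine stabilization makes $V_{N}(z)$ a $\mathfrak{g}$--submodule and lets you run the induction $V_{n}(z)=V_{N}(z)$ for $n\geq N$. But no such $N$ exists unless $V$ is finite dimensional: $V=\bigcup_{n}\pi(z_{o},U_{n}(\mathfrak{g}))W_{0}$ with each term finite dimensional. The substitute you propose --- stabilization "restricted to each isotypic component up to the relevant truncation" --- does not give $\mathfrak{g}$--stability (applying $\mathfrak{g}$ leaves the truncated set of $K$--types and can feed back into it at higher filtration levels), so the induction collapses. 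What semicontinuity of ranks actually gives is: for each fixed $\gamma\in\widehat{K}$ and the $n(\gamma)$ needed so that $V(\gamma)\subset\pi(z_{o},U_{n(\gamma)}(\mathfrak{g}))W_{0}$, there is a neighborhood $U_{\gamma}$ of $z_{o}$ on which $\dim\bigl(\pi(z,U_{n(\gamma)}(\mathfrak{g}))W_{0}\bigr)(\gamma)\geq\dim V(\gamma)$, hence containment there. Since $V$ has infinitely many $K$--types, the intersection of the $U_{\gamma}$ may shrink to $\{z_{o}\}$, and no single neighborhood $U$ is produced. The closing "bookkeeping" claim that the $K$--types of the generated module are constrained to $S$ plus the weights of $\mathrm{Ad}$ on $U(\mathfrak{g})$ is vacuous, because $U(\mathfrak{g})$ contains every $K$--type; and the auxiliary hypothesis $\dim V_{N}(z)=\dim V_{N}(z_{o})$ near $z_{o}$ is also unjustified (rank is only lower semicontinuous, so it can jump up off $z_{o}$).

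The uniformity over the infinitely many $K$--types is precisely the content of the theorem, and it cannot be reached by pointwise semicontinuity alone; note that your argument never uses $Z(\mathfrak{g})$, which is a warning sign. The paper does not prove the statement (it is quoted from [vdN], Theorem 3.2.11), but it indicates the key input: Theorem \ref{inf}, i.e.\ that over a compact neighborhood of $z_{o}$ the generalized infinitesimal characters of the $\pi(z,\cdot)$ have Harish--Chandra parameters in a compact subset of $\mathfrak{h}^{\ast}$. Combined with the standard Casimir/lowest--$K$--type estimates, this yields one \emph{fixed finite} set $S\subset\widehat{K}$ such that every nonzero admissible quotient of $(\pi(z,\cdot),V)$, for every $z$ near $z_{o}$, contains a $K$--type from $S$; taking $W=\sum_{\gamma\in S}V(\gamma)$ (finite dimensional by admissibility), the quotient $V/\pi(z,U(\mathfrak{g}))W$ has vanishing $\gamma$--isotypic components for $\gamma\in S$ and hence is zero, uniformly in $z$ on that neighborhood. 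Some a priori input of this kind, giving a single finite set of $K$--types that detects all quotients simultaneously in $z$, is what your proposal is missing.
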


Fix a Cartan subalgebra, $\mathfrak{h}$, of $\mathfrak{g}$. Denote by
$Z(\mathfrak{g})$ the center of $U(\mathfrak{g})$. We use the Harish--Chandra
parametrization of homomorphisms of $Z(\mathfrak{g})$ to $\mathbb{C}$. That
is, $\chi=\chi_{\Lambda}$, with $\Lambda\in\mathfrak{h}^{\ast}$(determined up
to the action of the Weyl group). As part of his proof of the theorem above he
proves (Proposition 3.2.5 in [vdN])

\begin{theorem}
\label{inf}If $X$ is a compact subset of $\Omega$ then the set of
Harish--Chandra parameters of the generalized infinitesimal characters of the
$\pi(z,\cdot),$ $z\in X$  is contained in a compact subset of $\mathfrak{h}%
^{\ast}$.
\end{theorem}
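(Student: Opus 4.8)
The strategy is to reduce, via the Harish--Chandra isomorphism, to a statement about eigenvalues of a holomorphic family of endomorphisms of a \emph{fixed} finite--dimensional vector space. Recall that the Harish--Chandra isomorphism identifies $Z(\mathfrak{g})$ with $S(\mathfrak{h})^{W}$, which by Chevalley's theorem is a polynomial algebra; fix $\zeta_{1},\dots,\zeta_{\ell}\in Z(\mathfrak{g})$ whose images form a system of homogeneous generators, so that each $\chi_{\Lambda}(\zeta_{i})=Q_{i}(\Lambda)$ for a polynomial $Q_{i}$ on $\mathfrak{h}^{\ast}$, the $Q_{i}$ generating $S(\mathfrak{h})^{W}$. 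The induced morphism $\mathfrak{h}^{\ast}\to\mathbb{C}^{\ell}$, $\Lambda\mapsto(Q_{1}(\Lambda),\dots,Q_{\ell}(\Lambda))$, is finite, hence proper: a subset of $\mathfrak{h}^{\ast}$ whose image is bounded is relatively compact. Thus it is enough to prove that for each $i$,
\[
\sup\bigl\{\,|\chi_{\Lambda}(\zeta_{i})| : z\in X,\ \chi_{\Lambda}\text{ a generalized infinitesimal character of }\pi(z,\cdot)\,\bigr\}<\infty .
\]
(If $G$ is not connected, replace $Z(\mathfrak{g})$ throughout by its subalgebra of $\mathrm{Ad}(K)$--invariants, over which $\mathfrak{h}^{\ast}$ is still finite; I describe only the connected case.)

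Fix $z_{0}\in X$. By Theorem \ref{types} there are an open $U\ni z_{0}$ and a finite--dimensional $K$--invariant subspace $W\subset V$ with $\pi(z,U(\mathfrak{g}))W=V$ for all $z\in U$. Let $S\subset\widehat{K}$ be the finite set of $K$--types occurring in $W$ and set $V_{S}=\bigoplus_{\gamma\in S}V(\gamma)$. Because $\pi(z,X)$ is the prescribed $K$--action for $X\in Lie(K)$, the $K$--isotypic decomposition of $V$ does not depend on $z$, so $V_{S}$ is a \emph{fixed} finite--dimensional subspace of $V$ (using admissibility). Since $G$ is connected, $\mathrm{Ad}(G)$ fixes $Z(\mathfrak{g})$ pointwise, so each $\pi(z,\zeta)$, $\zeta\in Z(\mathfrak{g})$, commutes with $K$, hence preserves every $V(\gamma)$ and therefore $V_{S}$; moreover condition~2 in the definition of a holomorphic family shows that $z\mapsto\pi(z,\zeta)\big|_{V_{S}}\in\operatorname{End}(V_{S})$ is holomorphic on $U$. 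The coefficients of $\det\!\bigl(tI-\pi(z,\zeta)|_{V_{S}}\bigr)$ are then holomorphic on $U$, so the eigenvalues of $\pi(z,\zeta)|_{V_{S}}$ remain bounded as $z$ ranges over any compact subset of $U$.

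It remains to check the decisive point: every generalized infinitesimal character $\chi_{\Lambda}$ of $\pi(z,\cdot)$ satisfies $\chi_{\Lambda}(\zeta)\in\operatorname{Spec}\bigl(\pi(z,\zeta)|_{V_{S}}\bigr)$ for every $\zeta\in Z(\mathfrak{g})$. By Theorem \ref{types} the module $\pi(z,\cdot)$ is finitely generated, and being admissible it is $Z(\mathfrak{g})$--finite, so $V$ splits as the direct sum of its $Z(\mathfrak{g})$--primary subspaces $V_{j}$, on which $\pi(z,\zeta)$ acts as $\chi_{\Lambda_{j}}(\zeta)$ plus a nilpotent. The projection onto $V_{j}$ has the form $\pi(z,e_{j})$ with $e_{j}\in Z(\mathfrak{g})$, hence commutes with $K$; so $\pi(z,e_{j})W$ is $K$--invariant, lies in $\bigoplus_{\gamma\in S}V(\gamma)=V_{S}$, and is nonzero --- otherwise $V_{j}=U(\mathfrak{g})\,\pi(z,e_{j})W=\{0\}$, since $W$ generates $V$ and $e_{j}$ is central. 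Thus $\{0\}\neq\pi(z,e_{j})W\subset V_{j}\cap V_{S}$, a $\pi(z,\zeta)$--stable subspace on which $\pi(z,\zeta)$ has the single eigenvalue $\chi_{\Lambda_{j}}(\zeta)$, giving the claim. I expect this bridge --- producing, uniformly in $z$, a fixed finite--dimensional space that simultaneously carries all the generalized infinitesimal characters --- to be the main obstacle; once it is in place one finishes routinely: cover the compact set $X$ by finitely many neighborhoods $U$ as above (each with its own $W$, $S$, $V_{S}$), apply the second paragraph on a compact subneighborhood of each to bound $|\chi_{\Lambda}(\zeta_{i})|$ for $i=1,\dots,\ell$ over all of $X$, and conclude by the properness of $\Lambda\mapsto(Q_{i}(\Lambda))_{i}$ that the set of all Harish--Chandra parameters occurring for $z\in X$ lies in a compact subset of $\mathfrak{h}^{\ast}$.
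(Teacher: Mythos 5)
Your argument is mathematically sound as a deduction of Theorem \ref{inf} from Theorem \ref{types}, but you should be aware that the paper itself contains no proof to compare with: both statements are quoted from van der Noort's thesis, and the paper explicitly says that Theorem \ref{inf} (Proposition 3.2.5 of [vdN]) is established there \emph{as part of the proof of} Theorem \ref{types} (Theorem 3.2.11 of [vdN]). Your route therefore inverts the logical order of the source: as a self-contained proof it is circular unless one either treats Theorem \ref{types} as a black box or supplies a proof of it that does not pass through the boundedness of infinitesimal characters. Within the present paper, where \ref{types} is simply cited, what you have written is a correct conditional derivation (and a nice observation that \ref{types} quickly implies \ref{inf}), not an independent proof of van der Noort's proposition.

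Granting \ref{types}, the details check out. The reduction via the Harish--Chandra isomorphism and the properness of the finite morphism $\Lambda\mapsto(Q_{1}(\Lambda),\dots,Q_{\ell}(\Lambda))$ is standard and correct; the fixed space $V_{S}$ makes sense because the $K$--module structure of $V$ is independent of $z$ and $V$ is admissible (do replace $W$ by its $K$--span, which is still finite dimensional, so that $S$ and the inclusion $W\subset V_{S}$ are unambiguous); for $G$ connected $\pi(z,\zeta)$ commutes with $K$ and preserves $V_{S}$, and condition 2 of the definition of a holomorphic family does give holomorphy of $z\mapsto\pi(z,\zeta)|_{V_{S}}$, hence boundedness of its eigenvalues on compacta. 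The decisive step is also right: since $\zeta W=0$ forces $\zeta V=\zeta\,\pi(z,U(\mathfrak{g}))W=0$ for central $\zeta$, the algebra $Z(\mathfrak{g})$ acts on $V$ through a finite--dimensional quotient, the primary decomposition exists, and the lifted central idempotent $e_{j}$ cannot kill $W$ without killing $V_{j}$; so $\{0\}\neq\pi(z,e_{j})W\subset V_{j}\cap V_{S}$ places $\chi_{\Lambda_{j}}(\zeta)$ in the spectrum of $\pi(z,\zeta)|_{V_{S}}$. The finite covering of $X$ and the properness argument then finish the proof. The parenthetical treatment of non-connected $G$ via $Z(\mathfrak{g})^{\mathrm{Ad}(K)}$ is only a sketch but is repairable, since $Z(\mathfrak{g})$ is finite over that subalgebra; in the class of groups considered in this paper this case does need to be addressed.
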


\subsection{Families of Hilbert representations}

We now define a holomorphic family of admissible Hilbert representations of
$G$ to be a triple $(\Omega,\pi,H)$ with $\Omega$ an open subset of
$\mathbb{C}^{n}$ invariant under complex conjugation, $(\tau,H)$ a unitary
representation of $K$ such that the $K$--finite vectors form an admissible
$(Lie(K)\otimes\mathbb{C},K)$--module and $\pi$ a map from $\Omega\times G$ to
the bounded invertible operators on $H$ satisfying:

1. $\pi$ is a strongly continuous map of $\Omega\times G$ to the bounded
operators on $H$ such that $g\mapsto\pi(z,g)$ defines an admissible finitely
generated representation of $G$ for each $z\in\Omega.$

2. If $(...,...)$ is the inner product on $H$ then the map $z\mapsto
(\pi(z,g)v,w)$ is holomorphic for all $g\in G,v,w\in H$.

We define the conjugate dual family to be $\widehat{\pi}(z,g)=\pi
(z,g^{-1})^{\ast}$ we note that this family is antiholomorphic (i.e.
$\widehat{\pi}(\bar{z},g)$ defines a holomorphic family).

Let $C$ be the Casimir operator of $G$ chosen as as in section 2, For
simplicity, we will assume that on the $K$--finite vectors of $H$, $d\pi(z,C)$
acts by a scalar (depending on $z$). Thus if $V$ is the space of $K$--finite
vectors then the space of $C^{\infty}$ vectors for $\pi(z,\cdot)$ is equal to
the completion of $V$ relative to the seminorms $p_{l}(v)=\left\Vert
(I+C_{K})^{l}v\right\Vert ,l=1,2,...$ (see Proposition \ref{C-K}).\ 

\textbf{Example.} Let $Q$ be a standard parabolic subgroup of $G$. We can
define a holomorphic family of Hilbert representations of the pair $(Q,Q\cap
K)$ exactly as above. Let $(\Omega,\sigma,W)$ be a holomorphic family of
Hilbert representations. Thus $\sigma(z,.)_{|K\cap Q}$ is independent of $z$
that defines a unitary representation of $K\cap Q$ \ and the continuity and
holomorphic conditions are satisfied. Then $Ind_{Q}^{G}(\sigma(z,\cdot))$
defines a holomorphic family of Hilbert representations of $G$. This includes
the parabolically induced representations $I_{Q,\sigma,\nu}$ with $\sigma$ an
admissible representation of $M_{Q}$ on which the Casimir operator of $M_{Q}$
corresponding to $B$ (as above) acts by a scaler.

Let $(\Omega,\pi,H)$ be a holomorphic family of Hilbert representations of $G$
and let $d\pi(z,x)$ denote the corresponding action of $x\in U(\mathfrak{g})$
on $H^{\infty}$ and on $V$.

\begin{proposition}
$1$. $(\Omega,d\pi,V)$ is a holomorphic family of objects in $\mathcal{H}%
(\mathfrak{g},K)$.

$2$. If $\lambda\in\left(  H^{\infty}\right)  ^{\prime},v\in H^{\infty}$ then
the map $z,g\mapsto\lambda(\pi(z,g)v)$ is $C^{\infty}$ and holomorphic in
$\Omega$.
\end{proposition}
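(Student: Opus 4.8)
The proposition has two parts, and I would prove each separately, in the order stated.

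For part 1, the plan is to verify the two defining conditions of a holomorphic family of objects in $\mathcal{H}(\mathfrak{g},K)$. Condition 1 (that for each fixed $z$ the operators $d\pi(z,X)$, $X\in\mathfrak{g}$, form a $\mathfrak{g}$--module compatible with the $K$--action on $V$) is immediate: differentiating the strongly continuous representation $g\mapsto\pi(z,g)$ on the Hilbert space $H$ yields a $\mathfrak{g}$--module structure on $H^\infty$, hence on $V=H_K^\infty$, and the compatibility with $K$ is the standard fact that $K$--finite $C^\infty$ vectors form a $(\mathfrak{g},K)$--module; admissibility of $V$ is part of the hypothesis on the family. Condition 2 (holomorphy of $(z,w,u)\mapsto d\pi(z,w)u$ on $\Omega\times W\times L$ with values in the finite-dimensional space $\operatorname{span}_{\mathbb{C}}d\pi(z,\Omega,W)L$) is where the work is: I would reduce the vector-valued statement to the scalar statement that $z\mapsto (d\pi(z,w)u,\,w')$ is holomorphic for all $w\in U(\mathfrak{g})$, $u\in V$, $w'\in V$ (using that the target is finite-dimensional by Lemma 19 applied to this family), and then prove the scalar statement by writing $d\pi(z,X)u$ as a limit of difference quotients $\tfrac1t(\pi(z,\exp tX)u-u)$. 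Hypothesis 2 for the Hilbert family gives that $z\mapsto(\pi(z,\exp tX)u,w')$ is holomorphic for each $t$; the issue is interchanging the limit $t\to 0$ with holomorphy in $z$, i.e.\ showing the convergence is locally uniform in $z$. Here one uses that on compact subsets of $\Omega$ one has uniform moderate-growth bounds $\|\pi(z,g)\|\le C\|g\|^r$ with $C,r$ independent of $z$ on the compact set (this follows from strong continuity on $\Omega\times G$ together with a Baire-category / uniform boundedness argument, exactly as in the single-representation case cited from [RRGI] 2.A.2.2), plus Taylor's theorem with remainder for the one-parameter group, to get an estimate $|\tfrac1t(\pi(z,\exp tX)u-u,w')-(d\pi(z,X)u,w')|\le C|t|$ locally uniformly in $z$. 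Then a locally uniform limit of holomorphic functions is holomorphic, and one iterates over a PBW expansion of $w\in U(\mathfrak{g})$ to handle all of $U(\mathfrak{g})$ (the iteration requires knowing $d\pi(z,X)u\in V$ and that the span stays finite-dimensional, which is Lemma 19 again).

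For part 2, I would apply part 1 together with the structural description of $H^\infty$. Since $d\pi(z,C)$ acts by a scalar on $V$ by assumption, Proposition \ref{C-K} gives that $H^\infty$ (for each fixed $z$) is the completion of $V$ in the seminorms $p_l(v)=\|(I+C_K)^l v\|$. For $\lambda\in (H^\infty)'$, $v\in H^\infty$, smoothness in $g$ of $g\mapsto\lambda(\pi(z,g)v)$ is standard ($v$ a smooth vector, $\lambda$ continuous on $H^\infty$). To get joint $C^\infty$-ness in $(z,g)$ and holomorphy in $z$, I would first treat $v\in V$ (the $K$--finite vectors): then $\lambda(\pi(z,g)v)$, after differentiating by left-invariant operators $x\in U(\mathfrak{g})$, equals $\lambda(\pi(z,g)\,d\pi(z,x)v)$, and by part 1 the vector $d\pi(z,x)v$ lies in a fixed finite-dimensional $K$--stable subspace and depends holomorphically on $z$; combined with hypothesis 2 of the Hilbert family (holomorphy of matrix coefficients in $z$) and the moderate-growth bounds uniform on compacta in $z$, one gets that all derivatives $x(\lambda(\pi(z,g)v))$ exist, are continuous in $(z,g)$, and are holomorphic in $z$ — hence $\lambda(\pi(z,g)v)$ is $C^\infty$ in $(z,g)$ and holomorphic in $z$. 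Finally, for general $v\in H^\infty$, I would approximate: by Proposition \ref{C-K}, $v$ is a limit in the $p_l$-seminorms of $K$--finite vectors $v_j\in V$, so $(I+C_K)^{-N}$ applied appropriately lets one write $\lambda(\pi(z,g)v)$ as a locally-uniformly-convergent (in $(z,g)$, with derivatives) series over $K$--types, each term of the form already handled; the tail estimates come from continuity of $\lambda$ on $H^\infty$ (so $|\lambda(u)|\le B\,p_l(u)$ for some $l$) and the moderate-growth bound on $\pi(z,g)$ applied to the $p_l$-seminorm rather than the norm, uniformly for $z$ in a compact set. A locally uniform limit of functions holomorphic in $z$ and $C^\infty$ in $(z,g)$, with all derivatives converging locally uniformly, is again holomorphic in $z$ and $C^\infty$ in $(z,g)$, which finishes the proof.

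**Main obstacle.** The technical heart is establishing the moderate-growth bounds $\|\pi(z,g)\|\le C_X\|g\|^{r_X}$ with constants \emph{uniform for $z$ in compact subsets $X\subset\Omega$}; this is what makes all the limit-interchange and tail-estimate steps go through uniformly, and hence preserves holomorphy in $z$. I expect this to require a uniform boundedness / Baire category argument on $\Omega\times G$ (or invoking van der Noort's framework), paralleling but slightly strengthening the classical single-representation estimate from [RRGI].
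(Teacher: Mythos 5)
There is a genuine gap in your treatment of part 2, and it sits exactly where the paper's (one-line) proof does its work. For $K$--finite $v$ you reduce to showing that $z\mapsto\lambda(\pi(z,g)\,d\pi(z,x)v)$ is holomorphic and you justify this by appealing to ``hypothesis 2 of the Hilbert family (holomorphy of matrix coefficients in $z$).'' But hypothesis 2 only concerns the functions $z\mapsto(\pi(z,g)v,w)$ with $w\in H$; your $\lambda$ is merely a continuous functional on $H^{\infty}$, and $\pi(z,g)w$ is neither $K$--finite nor otherwise in a form to which hypothesis 2 applies. The estimate $|\lambda(u)|\le B\,p_{l}(u)$ alone does not convert $\lambda(\pi(z,g)w)$ into a matrix coefficient, so the base case of your induction is unproved; and your later step for general $v\in H^{\infty}$ (the $K$--type series) is declared to consist of ``terms of the form already handled,'' so it inherits the same gap rather than repairing it. The missing bridge --- which is precisely the content of the paper's proof --- is that continuity of $\lambda$ with respect to the seminorms $p_{l}$ (Proposition \ref{C-K} together with the standing scalar--Casimir assumption) gives a representation $\lambda(w)=((I+C_{K})^{l}w,u)$ for some fixed $l\ge 0$ and $u\in H$: one applies Riesz to $w\mapsto\lambda\bigl((I+C_{K})^{-l}w\bigr)$ on the image of $(I+C_{K})^{l}$, which is injective with eigenvalues $\ge 1$ on the $K$--isotypic components. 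Once this identity is in hand, $\lambda(\pi(z,g)v)=((I+C_{K})^{l}\pi(z,g)v,u)=(\pi(z,g)\,d\pi(z,\mathrm{Ad}(g^{-1})(I+C_{K})^{l})v,u)$, and holomorphy in $z$ plus smoothness follow from hypothesis 2 together with your part 1 (a finite sum of products of holomorphic scalar functions and matrix coefficients); none of the Baire--category uniform growth bounds or tail estimates is needed for this.

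Your surrounding machinery is not wrong, and it can be repaired without the displayed identity by instead expanding $\lambda=\sum_{\gamma}\lambda\circ E_{\gamma}$, noting that by admissibility each $\lambda\circ E_{\gamma}$ is pairing against a vector of $V(\gamma)$ (hence each term genuinely is a matrix coefficient), and then using your uniform-on-compacta bounds and the $\mathrm{Ad}$--trick to control $p_{l}(\pi(z,g)v)$ for locally uniform summability; but as written the decomposition you invoke (approximating $v$ by $K$--finite vectors) is aimed at the wrong object --- it is $\lambda$, not $v$, that must be expressed through the Hilbert inner product. For the record, the paper leaves part 1 to the reader and proves part 2 by exactly the one-line observation above; your difference-quotient argument for part 1, with locally uniform operator bounds obtained from joint strong continuity and uniform boundedness, is a reasonable way to supply what the paper omits.
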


\begin{proof}
We leave the first assertion to the reader. The second assertion follows from
the fact that there exists $l\in\mathbb{Z}_{\geq0}$ and $u\in H$ such that
$\lambda(v)=((I+C_{K})^{l}v,u)$ for $v\in H^{\infty}$.
\end{proof}

We will say that a holomorphic family of Hilbert representations, $(\Omega
,\pi,H)$ is locally of uniform moderate growth if for each $z_{o}\in\Omega$
there exist $d_{z_{o}}$, $C_{z_{o}}$ and $U_{z_{o}}\subset\Omega$ an open
neighborhood of $z_{o}$ such that $\left\Vert \pi(z,g)\right\Vert \leq
C_{z_{o}}\left\Vert g\right\Vert ^{d_{z_{o}}}$ for $z\in U_{z_{o}},g\in G$.
One can check that a parabolically induced family satisfies this condition.

\subsection{Theorem \ref{characterization} with parameters}

The CW theorem can be restated in the following form: If $V\in\mathcal{HF}%
_{\operatorname{mod}}(G)$ then $V$ is isomorphic with $\overline{\left(
V_{K}\right)  }$ which is isomorphic with $\overline{\overline{\left(
V_{K}\right)  }}$. The theorem therefore says

\begin{theorem}
Let $V\in\mathcal{HF}_{\operatorname{mod}}(G)$ then $\overline{V}_{|V_{K}%
}^{\prime}=\left(  V_{K}\right)  _{\operatorname{mod}}^{\ast}$ (see subsection 4.3).
\end{theorem}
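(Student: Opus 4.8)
The plan is to read this statement as a repackaging of the Casselman--Wallach theorem---more precisely, of the construction of the minimal completion in \S4.3 together with goodness---and to do essentially no new work. Write $W=V_{K}\in\mathcal{H}(\mathfrak{g},K)$; since $V$ is isomorphic in $\mathcal{HF}_{\operatorname{mod}}(G)$ with the maximal completion of its $K$--finite vectors (the equivalence is already in hand), the symbol $\overline{V}$ in the statement means $\overline{W}=\overline{V_{K}}$, and it suffices to establish the equality $\overline{W}{}_{|W}^{\prime}=W_{\operatorname{mod}}^{\ast}$ of subspaces of $W^{\ast}$.

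One inclusion is elementary and was already noted just before the Hilbert--realization lemma: if $\lambda\in\overline{W}{}_{|W}^{\prime}$ and $v\in W$ then $f_{\lambda,v}(g)=\lambda(\pi(g)v)$ is smooth (because $\overline{W}$ is a smooth Fr\'{e}chet module), of moderate growth (because $\overline{W}\in\mathcal{HF}_{\operatorname{mod}}(G)$), right $K$--finite (because $v$ is), and $Z(\mathfrak{g})$--finite (because $W$ is finitely generated and admissible), so $f_{\lambda,v}\in\mathcal{A}_{\operatorname{mod}}(G)$; and $xf_{\lambda,v}(k)=\lambda(kxv)$, which is exactly the defining condition for membership in $W_{\operatorname{mod}}^{\ast}$. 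Hence $\overline{W}{}_{|W}^{\prime}\subset W_{\operatorname{mod}}^{\ast}$, with no input from \S4 beyond the definitions.

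For the reverse inclusion I would simply chain together the results of \S4.3. Given $\lambda\in W_{\operatorname{mod}}^{\ast}$, invoke the Hilbert--realization lemma (Lemma~11) to pick a Hilbert representation $(\pi,H)$ with $H_{K}^{\infty}$ isomorphic with $W$, with $H^{\infty}=H^{\infty_{K}}$ and $\widehat{H}{}^{\infty}=\widehat{H}{}^{\infty_{K}}$, and with $(\widehat{\pi},\widehat{H}{}^{\infty})$ isomorphic in $\mathcal{HF}_{\operatorname{mod}}(G)$ with $\overline{(\widehat{W})}$. Then Theorem~\ref{crux} applies verbatim and gives $Z{}_{|W}^{\prime}=W_{\operatorname{mod}}^{\ast}$ for $Z=(\pi,H^{\infty})$; by the corollary to Theorem~\ref{crux} such a $Z$ is, up to isomorphism, the object $\overline{\overline{W}}$. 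Finally the goodness theorem of \S4 (every object of $\mathcal{H}(\mathfrak{g},K)$ is good) yields an isomorphism $\overline{\overline{W}}\cong\overline{W}$ in $\mathcal{HF}_{\operatorname{mod}}(G)$, which carries the copy of $W$ in $\overline{\overline{W}}{}_{K}$ to the copy in $\overline{W}{}_{K}$ and hence identifies $\overline{\overline{W}}{}_{|W}^{\prime}$ with $\overline{W}{}_{|W}^{\prime}$ as subspaces of $W^{\ast}$ (using that $W_{\operatorname{mod}}^{\ast}$ is stable under $(\mathfrak{g},K)$--automorphisms of $W$, which is immediate from $f_{\lambda\circ\phi,v}=f_{\lambda,\phi v}$). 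Thus $\overline{W}{}_{|W}^{\prime}=\overline{\overline{W}}{}_{|W}^{\prime}=W_{\operatorname{mod}}^{\ast}$, which in fact subsumes the elementary inclusion of the previous paragraph.

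There is no genuine obstacle internal to this argument: every nontrivial assertion has already been proved in the excerpt, and the proof is pure assembly. The real content sits entirely inside goodness, whose proof rests on the automatic continuity theorem, the intertwining--operator results of [VW], and the Langlands--quotient end game imported from [C]. I would flag only two points of hygiene: first, the reading of $\overline{V}$ as $\overline{V_{K}}$, which is legitimate precisely because the Casselman--Wallach equivalence is already established; and second, the observation that goodness is genuinely unavoidable here---combined with the corollary to Theorem~\ref{crux}, the statement being proved forces $\overline{W}\cong\overline{\overline{W}}$ in $\mathcal{HF}_{\operatorname{mod}}(G)$, i.e.\ it is equivalent to the Casselman--Wallach theorem itself, so one should not expect a cheaper route.
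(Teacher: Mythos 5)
Your proposal is correct and is essentially the paper's own argument: the paper obtains this theorem by restating the Casselman--Wallach theorem ($V\cong\overline{(V_{K})}\cong\overline{\overline{(V_{K})}}$) and quoting the characterization $\overline{\overline{(V_{K})}}{}_{|V_{K}}^{\prime}=(V_{K})_{\operatorname{mod}}^{\ast}$ furnished by Lemma 11, Theorem \ref{crux} and its corollary, which is exactly the assembly you perform. You merely make explicit the easy inclusion and the stability of $(V_{K})_{\operatorname{mod}}^{\ast}$ under $(\mathfrak{g},K)$--automorphisms, points the paper leaves tacit.
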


Let $(\Omega,\sigma,V)$ be a holomorphic family of objects in $\mathcal{H}%
(\mathfrak{g},K)$. We define $V_{z}\in$ $\mathcal{HF}_{\operatorname{mod}}(G)$
to be $(\sigma(z,\cdot),V)$ for $z\in\Omega$. \ If $\lambda_{z}\in
(V_{z})_{\operatorname{mod}}^{\ast}$ for each $z\in\Omega$ then $z\longmapsto
\lambda_{z}$ will be called holomorphic if the correspondence $z\mapsto
\lambda_{z}(v)$ is holomorphic for all $v\in V$. We will say that this family
is locally of uniform moderate growth if for each $v\in V$ and $z_{o}\in
\Omega$ there is $U_{z_{o}}\subset\Omega$ an open neighborhood of $z_{o}$ and
$C_{U_{z_{o}},v}$ and $d_{U_{z_{o}},v}$ such that (in the notations of
subsection 4.3)%
\[
|f_{\lambda_{z},v}(g)|\leq C_{U_{z_{o}},v}\left\Vert g\right\Vert
^{d_{U_{z_{o}},v}},z\in U.
\]
Our main result is

\begin{theorem}
Assume that $(\Omega,\sigma,V)$ is a Holomorphic family of objects in
$\mathcal{H}(\mathfrak{g},K)$ such that there is a holomorphic family of
admissible Hilbert representations $(\Omega,\pi,H)$ of local uniform moderate
growth such that $\sigma=d\pi$ and $V=H_{|K}$. If $z\mapsto\lambda_{z}%
\in(V_{z})_{\operatorname{mod}}^{\ast}$ is a holomorphic on $\Omega$ of local
uniform moderate growth \ on $\Omega$ and if we (also) denote the extension of
$\lambda_{z}$ to $H^{\infty}$ by $\lambda_{z}$ then $z\mapsto\lambda_{z}$ is
weakly holomorphic from $\Omega$ to $\left(  H^{\infty}\right)  ^{\prime}$.
\end{theorem}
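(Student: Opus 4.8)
The plan is to mimic the proof of Theorem \ref{crux} (the construction of the auxiliary Hilbert space $H_1$ from an element $\lambda \in (H_K)^*_{\operatorname{mod}}$) but to carry out the construction \emph{uniformly in a neighborhood of each $z_o \in \Omega$} and then to check that all the estimates and identifications depend holomorphically on $z$. Fix $z_o \in \Omega$; by Theorem \ref{types} there is an open neighborhood $U$ of $z_o$ and a finite-dimensional $K$- and $Z(\mathfrak{g})$-stable subspace $W \subset V$ with $\pi(z,U(\mathfrak{g}))W = V$ for all $z \in U$; shrinking $U$ using the local uniform moderate growth hypotheses we may also assume that there are constants $C,d$ with $\|\widehat\pi(z,g)\| \le C\|g\|^d$ and $|f_{\lambda_z,v_i}(g)| \le C\|g\|^d$ for all $z \in U$, $g \in G$, where $v_1,\dots,v_n$ is an orthonormal basis of $W$. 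Then, exactly as in the proof of Theorem \ref{crux}, define for each $z \in U$ the new inner product on $H$
\[
\langle v,w\rangle_z = \sum_{i=1}^n \int_G (v_i,\widehat\pi(z,g)w)(\widehat\pi(z,g)v,v_i)\|g\|^{-2d-d_o}\,dg ,
\]
with $d_o$ chosen so that $\int_G \|g\|^{-d_o}dg < \infty$; the uniform bound on $\|\widehat\pi(z,g)\|$ makes the integral converge and gives $\|v\|_{1,z} \le C_1\|v\|$ with $C_1$ independent of $z \in U$. Let $H_{1,z}$ be the corresponding Hilbert completion of $H$; as in Theorem \ref{crux} one has $(\widehat{\pi_{1,z}}, H_{1,z})$ a Hilbert realization of $V$ whose $C^\infty$ vectors form an object of $\mathcal{HF}_{\operatorname{mod}}(G)$ isomorphic to $\overline{\overline{V_z}}$ (here invoking Proposition \ref{C-K} and Lemma \ref{regularity} so that the $C^\infty$ topology is given by the $(I+C_K)^l$-seminorms), and the $K$-isotypic projections $E_\gamma$ are the same for all these inner products.

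Next I would repeat the Fourier-series-on-$K$ computation from the proof of Theorem \ref{crux} to see that $\sum_\gamma (\lambda_z)_\gamma$ converges in $H_{1,z}$ with $\sum_\gamma \|(\lambda_z)_\gamma\|_{1,z}^2 = \sum_i \int_G |f_{\lambda_z,v_i}(g)|^2\|g\|^{-2d-d_o}\,dg$, the right side being bounded uniformly for $z \in U$ by the uniform moderate growth of the family $\{f_{\lambda_z,v}\}$. Then, using the fact (from Lemma \ref{regularity} / Proposition \ref{C-K}) that $\|\cdot\|$ is a continuous seminorm on $H_{1,z}^\infty$, there are $l$ and $B$ — which can be chosen \emph{uniformly in $z \in U$} because the relevant estimate only involves the constant $C_1$ and the scalar $\mu(\gamma)$ by which $C_K$ acts on $V_\gamma$ (independent of $z$), together with the Casimir scalar, which by Theorem \ref{inf} stays in a bounded set as $z$ ranges over the compact closure of a smaller neighborhood — so that $\sum_\gamma (1+\mu(\gamma))^{-l}(\lambda_z)_\gamma$ converges in $H$ to an element $u_z$ with $\lambda_z(v) = ((I+C_K)^l v, u_z)$ for $v \in V$ and $\|u_z\| \le B\sqrt{\sum_\gamma \|(\lambda_z)_\gamma\|_{1,z}^2}$. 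This simultaneously re-proves that $\lambda_z$ extends continuously to $H^\infty$ (which is Theorem \ref{crux} with parameters) and gives the explicit formula $\lambda_z(v) = ((I+C_K)^l v, u_z)$, $v \in H^\infty$, which is what I will use for holomorphy.

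Finally, to get weak holomorphy of $z \mapsto \lambda_z \in (H^\infty)^\prime$ it suffices to show $z \mapsto u_z \in H$ is weakly holomorphic, i.e. $z \mapsto (w, u_z)$ is holomorphic for each $w \in H$; since $\|u_z\|$ is locally bounded, by a standard Morera/weak-holomorphy argument it is enough to check holomorphy of $z \mapsto (u_z, \phi)$ for $\phi$ ranging over the dense set $V$ of $K$-finite vectors, and for $\phi \in V(\gamma)$ we have $(u_z,\phi) = (1+\mu(\gamma))^{-l}\overline{\lambda_z(\phi)}$ (up to the identification via $\tau$), wait — more precisely each term $(\lambda_z)_\gamma$ depends on $z$ only through the finitely many numbers $\lambda_z(E_\gamma v_i)$ paired against the holomorphic family $\widehat\pi(z,\cdot)$, and $z \mapsto \lambda_z(v)$ is holomorphic by hypothesis for every fixed $v$, so each partial sum $\sum_{\gamma \in F}(1+\mu(\gamma))^{-l}(\lambda_z)_\gamma$ is holomorphic in $z$ with values in $H$, and these partial sums converge to $u_z$ uniformly on a neighborhood of $z_o$ because the tail is controlled by the uniform bound $\sum_\gamma \|(\lambda_z)_\gamma\|_{1,z}^2 \le \text{const}$ together with summability of $(1+\mu(\gamma))^{-l}$ for $l$ large (increasing $l$ if necessary). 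A locally uniform limit of holomorphic $H$-valued functions is holomorphic, so $z \mapsto u_z$ is holomorphic, hence $z \mapsto \lambda_z = ((I+C_K)^l(\cdot), u_z)$ is weakly holomorphic on $U$; since $z_o$ was arbitrary this gives weak holomorphy on all of $\Omega$. The main obstacle I anticipate is the last convergence-and-holomorphy step: one must make sure the choice of $l$ (and the constant $B$) controlling the tail of $\sum_\gamma (1+\mu(\gamma))^{-l}(\lambda_z)_\gamma$ can be taken locally uniform in $z$, which is exactly where Theorem \ref{inf} (boundedness of infinitesimal characters over compacta) and the local uniform moderate growth hypotheses are essential, and one should be slightly careful that increasing $l$ to gain summability does not spoil the identity $\lambda_z(v) = ((I+C_K)^l v,u_z)$ — it does not, since $(I+C_K)$ is invertible on each $V(\gamma)$ and one simply absorbs extra powers.
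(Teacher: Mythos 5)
Your first half follows the paper exactly: the choice of $U$, the generating subspace $W$ via Theorem \ref{types}, the uniform bounds $\Vert\widehat{\pi}(z,g)\Vert\leq C\Vert g\Vert^{d}$ and $|f_{\lambda_{z},v_{i}}(g)|\leq C\Vert g\Vert^{d}$, the inner products $\left\langle\cdot,\cdot\right\rangle_{z}$, the uniform inequality $\Vert v\Vert_{z}\leq C_{1}\Vert v\Vert$, and the uniform bound $\sum_{\gamma}\Vert\lambda_{z,\gamma}\Vert_{z}^{2}\leq C_{2}$ are all as in the paper. The gap is at the decisive step: you assert that the constants $l$ and $B$ in $\Vert v\Vert\leq B\Vert(I+C_{K})^{l}v\Vert_{1,z}$ can be chosen uniformly for $z$ near $z_{o}$, ``because the relevant estimate only involves $C_{1}$, $\mu(\gamma)$ and the Casimir scalar.'' That is not where this inequality comes from. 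In the proof of Theorem \ref{crux} it is obtained from the purely qualitative fact that the $G$--smooth vectors of $\widehat{\pi}$ in $H$ coincide, as a topological vector space, with $H_{1}^{\infty}$ (via the maximal--completion property, or for each fixed $z$ via the CW theorem), after which $l$ and $B$ are produced by continuity of a norm on a Fr\'{e}chet space --- a closed--graph type argument that gives no control whatsoever on how $l,B$ depend on $z$. The quantities you cite ($C_{1}$, the $C_{K}$--eigenvalues $\mu(\gamma)$, and the Casimir scalars bounded via Theorem \ref{inf}) control the \emph{opposite} comparisons ($\Vert\cdot\Vert_{z}\lesssim\Vert\cdot\Vert$ and the $\Delta$--versus--$C_{K}$ estimates of Lemma \ref{regularity} and Proposition \ref{C-K}); they say nothing about a lower bound for $\Vert\cdot\Vert_{z}$ uniform in $z$. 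A locally uniform version of that lower bound is essentially a parametrized automatic--continuity statement, i.e.\ it is the whole difficulty of the theorem, so it cannot simply be asserted. Without a single $l$ valid on a neighborhood, your vector $u_{z}$ and the identity $\lambda_{z}(v)=((I+C_{K})^{l}v,u_{z})$ are not even defined as a family, and the final locally-uniform-convergence/holomorphy argument (which is otherwise fine, given the uniform tail control you get by raising $l$) has nothing to run on.

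The paper's proof is designed precisely to avoid this uniformity. It writes $\left\langle v,w\right\rangle_{z}=(A_{z}v,w)$ with $A_{z}$ a bounded positive operator depending real analytically on $z$ (this uses only the holomorphy of the Hilbert family and the uniform growth bounds), sets $B_{z}=A_{z}^{1/2}$, and uses the CW machinery only pointwise in $z$: for each fixed $z$, $B_{z}$ is an isomorphism of $H_{z}^{\infty}$, with no claim of uniformity of the attendant seminorm estimates. The series $u_{z}=\sum_{\gamma}B_{z}\lambda_{z,\gamma}$ then converges in $H$ using only the uniform bound $(\ast\ast)$, $z\mapsto u_{z}$ is real analytic, one writes $\lambda_{z}=B_{z}^{-1}u_{z}$ as an element of $(H^{\infty})^{\prime}$, and weak holomorphy follows from holomorphy of $z\mapsto\lambda_{z}(v)$ on the dense subspace $V$. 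To repair your argument you would either have to prove the locally uniform lower bound $\Vert w\Vert\leq B(1+\mu(\gamma))^{l}\Vert w\Vert_{1,z}$ for $w\in V(\gamma)$, $z\in U$ (a genuine theorem, not a bookkeeping remark), or replace the scalar weights $(1+\mu(\gamma))^{-l}$ by the operator $B_{z}$ as the paper does.
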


\begin{proof}
We will follow the proof of Theorem \ref{crux} which can be found in
subsection \ref{proofmin}. We will use notation from that subsection. Let
$z_{o}\in\Omega$ and let $U\subset\Omega$ be an open neighborhood of $z_{o}$
such that there exists $d$ and $C>0$ so that the following 3 conditions are
satisfied for all $z\in U$

1. $\left\Vert \widehat{\pi}(z,g)\right\Vert \leq C\left\Vert g\right\Vert
^{d}$ for some $C>0$ and

2. $v_{1},...,v_{n}$ an orthonormal basis of a finite sum of $K$--isotypic
components $W\subset V$ such that $V=d\pi(z,U(\mathfrak{g}))W$ for $z\in U$.

3. $|f_{\lambda_{z},v_{i}}(g)|\leq C\left\Vert g\right\Vert ^{d}$ for
$i=1,...,n$.

We note that 2. is possible by Theorem \ref{types} above. 3. is a consequence
of the local uniformity.

If $z\in U,v,w\in H$ we define ($d_{o}$ is as in section \ref{proofmin})
\[
\left\langle v,w\right\rangle _{z}=\sum_{i=1}^{n}\int_{G}(v_{i},\hat{\pi
}(z,g)v)\overline{(v_{i},\hat{\pi}(z,g)w)}\left\Vert g\right\Vert ^{-2d-d_{o}%
}dg.
\]

We observe that if we argue as in Theorem \ref{crux} and $\left\Vert
v\right\Vert _{z}$ \ is the corresponding norm we have for all $z\in U,v\in H$%
\[
\left\Vert v\right\Vert _{z}\leq C_{1}\left\Vert v\right\Vert \qquad
\overset{}{(\ast)}%
\]

with $C_{1}=C\sqrt{n\int_{G}\left\Vert g\right\Vert ^{-d_{o}}dg}$. The
inequality (*) implies that $\left(  H_{z}^{\infty}\right)  _{K}$ is
isomorphic with $\widehat{V}_{z}$. \ As in the proof of the theorem without
parameters if $\lambda_{z,\gamma}=\lambda_{z}\circ E_{\gamma}$ then we have%
\[
\sum_{\gamma\in\widehat{K}}\left\Vert \lambda_{z,\gamma}\right\Vert _{z}%
^{2}\leq C_{2}<\infty\qquad\overset{}{(\ast\ast)}%
\]
with $C_{2}=nC^{2}\int_{G}\left\Vert g\right\Vert ^{-d_{o}}dg$. Thus
$\lambda_{z}$ defines an element of $H_{z}$. We now note that $\left\langle
v,w\right\rangle _{z}=(A_{z}v,w)$ for $v,w\in H$ and $A_{z}$ is a bounded self
adjoint positive operator on $H$. We also note that $z\mapsto A_{z}$ is real
analytic in the strong operator topology (this is because weak analyticity is
the same as strong analyticity). Here we use
\[
\left\langle v,w\right\rangle _{z}=\sum_{i=1}^{n}\int_{G}(\pi(z,g)v_{i}%
,v)\overline{(\pi(z,g)v_{i},w)}\left\Vert g\right\Vert ^{-2d-d_{o}}dg.
\]

Also, since the topology of \ $(\widehat{\pi(z)},H_{z}^{\infty})$ is given by
the seminorms $v\mapsto\left\Vert (I+C_{K})^{l}v\right\Vert _{z}=\left\Vert
(I+C_{K})^{l}A_{z}^{\frac{1}{2}}v\right\Vert $ or by the semi-norms
$v\mapsto\left\Vert (I+C_{K})^{l}v\right\Vert $ this implies that $B_{z}%
=A_{z}^{\frac{1}{2}}$ defines an isomorphism of $H_{z}^{\infty}$. The upshot
is
\[
\sum B_{z}\lambda_{z,\gamma}=u_{z}\in H.
\]
The map $z\mapsto u_{z}$ is real analytic. We therefore have $\lambda
_{z}=B_{z}^{-1}u_{z}$ so it is real analytic from \thinspace$U$ into $\left(
H^{\infty}\right)  ^{\prime}.$ Since $z\mapsto\lambda_{z}(v)$ is holomorphic
for $v$ in the dense set $V$ we see that $z\mapsto\lambda_{z}$ is weakly
holomorphic from $U$ into $\left(  H^{\infty}\right)  ^{\prime}$.
\end{proof}

\subsubsection{Holomorphic families of automorphic forms and Eisenstein
series.}

We assume that $G$ has compact center and we consider the example of
automorphic forms in subsection \ref{automorphic}. Let $\Gamma$ be subgroup of
$G$ of finite covolume and let $\Omega$ be open in $\mathbb{C}^{m}$. Let
$f:\Omega\times G\rightarrow\mathbb{C}$ be real analytic such that $f(z,g)$,
is holomorphic in $z$ and automorphic on $G$ relative to $\Gamma$ then the
theory of the constant term implies the local uniformity of the previous theorem.

We note that the next result will use the work in Langlands [L] so all one
needs is to assume that $G$ and $\Gamma$ satisfy the hypotheses in the first
chapter of [L]. \ This material is difficult and (amazingly) the fact that his
properties are actually satisfied is not proved until the end of his (the
notorious) Chapter 7. We will therefore work in the simpler context of
arithmetic groups. That is we assume that $G$ is the set of real points of an
algebraic group defined over $\mathbb{Q}$ and $\Gamma$ is arithmetic with
respect to this $\mathbb{Q}$--structure. Let $Q$ be the real points of a
parabolic subgroup of $G$ defined over $\mathbb{Q}$. We choose $K$ so that $Q$
is $K$--standard. Before we can state the application to Eisenstein series we
need some notation. Let $U$ be the unipotent radical of $Q$ and let
$Q=LA_{Q}U$ be a standard $\mathbb{Q}$--Langlands decomposition of $Q$ (i.e.
$A_{Q}$ is the set of real points of a $\mathbb{Q}$--split torus that projects
onto the real points of a maximal $\mathbb{Q}$--split torus of $Q/U$). We
identify $L$ with $Q/A_{Q}U$. Set $\Gamma_{L}=\left(  \Gamma\cap Q\right)
/\left(  \Gamma\cap(A_{Q}U)\right)  $ and define $I(\Gamma_{L})$ to be the
space of all $f\in C^{\infty}(\Gamma_{L}\backslash L\times K)$ such that

1. If $x\in L,u\in K\cap L,k\in K$ then $f(xu,u^{-1}k)=f(x,k)$.

2. With $Z(Lie(L))$ (the center of $U(Lie(L)$) acting on the first factor we
have $\dim Z(Lie(L))f<\infty.$

3. With $U(Lie(L))$ acting on the first factor and $U(Lie(K))$ acting on the
second here exist $C_{f,x,y}$ and $d_{f}$ such that%
\[
|xyf(u,k)|\leq C_{f,x,y}|u|^{d_{f}},x\in U(Lie(L)),y\in U(Lie(K)),k\in K,u\in
L.
\]

4. For each $k\in K$ the map $x\longmapsto f(x,k)$ is a $\Gamma_{L}$--cusp
form. That is, if $P$ is the group of real points of parabolic subgroup of $L$
defined over $\mathbb{Q}$ with unipotent radical $U_{P}$ then
\[
\int_{U_{P}\cap\Gamma_{L}\backslash U_{P}}f(ux,k)du=0
\]
for all $x\in L$ and $k\in K$.

Let $\mu:Z(Lie(L))\rightarrow\mathbb{C}$ be a homomorphism and let $I_{\mu
}(\Gamma_{L})$ be the subspace of $I(\Gamma_{L})$ consisting of elements $h$
such that $uh=\mu(u)h$. We define an inner product on $I_{\mu}(\Gamma_{L})$:%
\[
(h_{1},h_{2})=\int_{\Gamma_{L}\backslash L\times K}h_{1}(u,k)\overline
{h_{2}(u,k)}dudk.
\]
We have an admissible unitary representation of $K$ on the Hilbert space
completion $H_{\mu}(\Gamma_{L})$ of $I_{\mu}(\Gamma_{L})$ by right translation
in the second variable. For $\nu\in Lie(A_{Q})_{\mathbb{C}}^{\ast}$ and $f\in
I_{\mu}(\Gamma_{L})$ \ we define $\widetilde{f}_{\nu}(uaxk)=a^{\nu+\rho_{_{Q}%
}}f(x,k)$ for $u\in U,a\in A_{Q},x\in L$ and $k\in K$. \ Then $\widetilde{f}%
_{\nu}\in C^{\infty}(G)$. We define $\pi(\nu,x)f(u,k)=R(x)\widetilde{f}_{\nu
}(uk)$ (here $R(x)\phi(g)=\phi(gx)$). This defines a holomorphic family of
Hilbert representations on $H_{\mu,\nu}(\Gamma_{L})$ of local (in $\nu$)
uniform moderate growth.

We note that $\left(  A_{Q}U\cap\Gamma\right)  /\left(  U\cap\Gamma\right)  $
is finite. If $f\in I_{\mu}(\Gamma_{L})$ we define%
\[
f_{\nu}(g)=\sum_{\gamma\in\left(  A_{Q}U\cap\Gamma\right)  /\left(
U\cap\Gamma\right)  }\widetilde{f_{\nu}(}\gamma g)\text{.}%
\]
Finally, the corresponding Eisenstein series is%
\[
E(Q,f,\nu)(g)=\sum_{\gamma\in\Gamma\cap U\backslash\Gamma}f_{\nu}(\gamma g).
\]
Langlands has proven a meromorphic continuation of $K$--finite Eisenstein
series ([L]) (that is $f$ is right $K$--finite in the $K$ variable). We can
think of these series as giving a meromorphic family, $\lambda_{\nu}$, of
elements of $((\pi(\nu),H_{\mu}(\Gamma_{L}))_{K})_{\operatorname{mod}}^{\ast}%
$. For each such and each $\nu_{o}\in Lie(A_{Q})_{\mathbb{C}}^{"}$ there
exists an open neighborhood of $\nu_{o}$, $\Omega_{1}$, and a holomorphic
function, $h$ on $\Omega_{1}$ such that $h(\nu)E(Q,f,\nu)$ is holomorphic on
$\Omega_{1}$. Let $\nu_{o}\in\Omega\subset\overline{\Omega}\subset\Omega_{1}$
with $\Omega$ open with compact closure. Then on $\Omega$ the family is of
local moderate growth. So Theorem 26 implies

\begin{theorem}
If $f\in I(\Gamma_{L})$ then the Eisenstein series $E(Q,f,\nu)$ initially
defined and holomorphic for $\operatorname{Re}\nu(\check{\alpha})>C\rho
_{Q}(\check{\alpha})$ for some $C$ depending on $f$ and $Q$ and all coroots of
roots appearing in the unipotent radical of $Q$ has a meromorphic continuation
to $(Lie(A)\otimes\mathbb{C})^{\ast}$.
\end{theorem}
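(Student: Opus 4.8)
The plan is to realize $\nu\mapsto E(Q,f,\nu)$ as a weakly meromorphic family of matrix coefficients of the holomorphic family of Hilbert representations $(\nu\mapsto\pi(\nu),H_{\mu,\nu}(\Gamma_{L}))$ set up above, and to deduce the continuation by transferring Langlands' meromorphic continuation of the $K$--finite Eisenstein series to the $C^{\infty}$ setting via the parametrized form of Theorem \ref{crux} proved above. First I would reduce to $f\in I_{\mu}(\Gamma_{L})$ for a single homomorphism $\mu:Z(Lie(L))\to\mathbb{C}$: by condition 2 in the definition of $I(\Gamma_{L})$ and the decomposition of the cuspidal spectrum of $L$ into genuine infinitesimal characters, an arbitrary $f\in I(\Gamma_{L})$ is a finite sum of such elements, and $E(Q,\cdot,\nu)$ is linear in $f$. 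I would also record that $I_{\mu}(\Gamma_{L})$ embeds into the space of $C^{\infty}$ vectors $H_{\mu,\nu}(\Gamma_{L})^{\infty}$ and that the $K$--finite vectors of $H_{\mu}(\Gamma_{L})$ are exactly the $K$--finite elements of $I_{\mu}(\Gamma_{L})$ (being $Z(Lie(L))$--eigenfunctions, these are automatically smooth in the $L$--variable).

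Next I would identify the relevant functionals. For $v$ a $K$--finite vector and $\operatorname{Re}\nu$ in the tube where the defining series converges absolutely, $E(Q,v,\nu)$ is an automorphic form of moderate growth and $v\mapsto E(Q,v,\nu)$ is a $(\mathfrak{g},K)$--map into $\mathcal{A}_{\operatorname{mod}}(\Gamma\backslash G)$. Hence, setting $\lambda_{\nu}(v)=E(Q,v,\nu)(e)$, we get $f_{\lambda_{\nu},v}(g)=\lambda_{\nu}(\pi(\nu,g)v)=E(Q,v,\nu)(g)\in\mathcal{A}_{\operatorname{mod}}(G)$, so in the language of subsection \ref{automorphic} $\lambda_{\nu}\in((\pi(\nu),H_{\mu}(\Gamma_{L}))_{K})_{\operatorname{mod}}^{\ast}$; and by Langlands' theorem $\nu\mapsto\lambda_{\nu}(v)$ extends to a meromorphic function of $\nu$ for each such $v$. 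Thus $\nu\mapsto\lambda_{\nu}$ is a meromorphic family of moderate--growth functionals on the $K$--finite vectors.

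Then I would localize and apply the parametrized theorem. Fixing $\nu_{o}\in Lie(A_{Q})_{\mathbb{C}}^{\ast}$, I would choose a neighborhood $\Omega_{1}$, a scalar holomorphic function $h$ on $\Omega_{1}$ clearing the polar divisors of $E(Q,f,\nu)$ that meet a small ball $\nu_{o}\in\Omega\subset\overline{\Omega}\subset\Omega_{1}$ with $\overline{\Omega}$ compact, so that $\mu_{\nu}:=h(\nu)\lambda_{\nu}$ is a holomorphic family of elements of $((\pi(\nu),H_{\mu}(\Gamma_{L}))_{K})_{\operatorname{mod}}^{\ast}$ on $\Omega$. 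Since $g\mapsto h(\nu)E(Q,v,\nu)(g)$ is for $\nu\in\Omega$ a holomorphic--in--$\nu$ family of automorphic forms, the theory of the constant term (as already noted above for general holomorphic families of automorphic forms) shows $\nu\mapsto\mu_{\nu}$ is of local uniform moderate growth in the sense of subsection 4.3. The family $(\Omega,\pi,H_{\mu,\nu}(\Gamma_{L}))$ is itself of local uniform moderate growth since it is parabolically induced, and $d\pi(z,C)$ acts by a scalar on the $K$--finite vectors; so the parametrized Theorem \ref{crux} applies and yields that $\nu\mapsto\mu_{\nu}$ is weakly holomorphic from $\Omega$ into $(H_{\mu,\nu}(\Gamma_{L})^{\infty})'$, whence $\nu\mapsto\lambda_{\nu}=h(\nu)^{-1}\mu_{\nu}$ is weakly meromorphic there. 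Since $\nu_{o}$ was arbitrary and the parameter space is connected, these local continuations agree on overlaps and glue to a global weakly meromorphic family $\nu\mapsto\lambda_{\nu}\in(H_{\mu,\nu}(\Gamma_{L})^{\infty})'$. Finally, as $f\in I_{\mu}(\Gamma_{L})\subset H_{\mu,\nu}(\Gamma_{L})^{\infty}$ and $\nu\mapsto\pi(\nu,g)f$ is holomorphic into $H_{\mu,\nu}(\Gamma_{L})^{\infty}$ by the proposition above on holomorphy of $z,g\mapsto\lambda(\pi(z,g)v)$, the function $\nu\mapsto E(Q,f,\nu)(g)=\lambda_{\nu}(\pi(\nu,g)f)$ is meromorphic on all of $(Lie(A)\otimes\mathbb{C})^{\ast}$, agreeing on the tube with the originally defined Eisenstein series by continuity of $v\mapsto E(Q,v,\nu)$ on $H_{\mu,\nu}(\Gamma_{L})^{\infty}$ there.

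I expect the main obstacle to be the local uniform moderate growth of the family $\nu\mapsto h(\nu)E(Q,\cdot,\nu)$; once that is in hand, the rest is a formal application of the parametrized Theorem \ref{crux}. This estimate is precisely the point at which the theory of the constant term must be invoked: the size of $E(Q,v,\nu)$ on a Siegel set is dominated by the sizes of its constant terms along the finitely many standard $\mathbb{Q}$--parabolics, each such constant term being a finite sum of terms of the shape ($c$--function, a product of normalized rank--one intertwining integrals, applied to the cuspidal datum $v$) times $a^{s\nu+\rho_{Q}}$, with the intertwining factors holomorphic after multiplication by $h$ and hence bounded on the compact set $\overline{\Omega}$; combining the resulting exponential bounds, uniform in $\nu\in\overline{\Omega}$, with the decomposition of $G$ into finitely many $\Gamma$--translates of a Siegel set gives $|h(\nu)E(Q,v,\nu)(g)|\leq C\left\Vert g\right\Vert^{d}$ uniformly in $\nu\in\Omega$. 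I expect the careful tracking of uniformity here, rather than any conceptual difficulty, to be the bulk of the work; it uses nothing beyond Langlands' constant--term machinery.
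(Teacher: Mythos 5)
Your proposal follows the paper's own route: you realize the Langlands-continued $K$--finite Eisenstein series as a family $\lambda_{\nu}$ of elements of $((\pi(\nu),H_{\mu}(\Gamma_{L}))_{K})_{\operatorname{mod}}^{\ast}$, clear poles locally by a scalar holomorphic $h$, obtain local uniform moderate growth from the theory of the constant term, and then apply the parametrized version of Theorem \ref{crux} to extend weakly holomorphically to $\left(H^{\infty}\right)^{\prime}$, which is exactly the argument the paper gives. The extra details you supply (reduction to a single $\mu$, the Siegel--set uniformity estimate, and the gluing of local continuations) just make explicit what the paper leaves implicit, so this is correct and essentially the same proof.
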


\begin{center}
{\Large References}

\medskip
\end{center}

\noindent\lbrack BK] J. Bernstein and B. Kr\"{o}tz, Smooth Fr\'{e}chet
globalizations of Harish-Chandra modules, to appear\medskip

\noindent\lbrack BW] A. Borel and N. Wallach, Continuous cohomology, discrete
subgroups and representations of reductive groups, Second Edition,
Mathematical Surveys and Monographs,Volume 67, AMS, Providence, RI,
2000.\medskip

\noindent\lbrack C] W. Casselman, Canonical extensions of Harish-Chandra
modules to representations of $G,$Canadian J, Math. 41(1989),286-320.\medskip

\noindent\lbrack F] Gerald B. Folland, Introduction to partial differential
equations, second edition, Princeton University Press, Princeton, NJ,
1995.\medskip

\noindent\lbrack K] B. Kostant,On the existence and irreducibility of certain
series of representations, Bull. Amer. Math. Soc. , 75(1969), 627-642.\medskip

\noindent\lbrack L] Robert P. Langlands, On the functional equations satisfied
by Eisenstein series, Lecture Notes in Mathematics 544, Springer-Verlag,
Berlin, 1974.

\noindent\lbrack Le] Seung Lee, Representations with small $K$--types, to
appear.\medskip

\noindent\lbrack OS] T. Oshima and J. Sekiguchi, Eigenspaces of invariant
differential operators on an affine symmetric space, Inventiones Math.,
57(1980).\medskip

\noindent\lbrack Sc] G. Schiffman, Integrals d'interlacement et fonctions de
Whittaker, Bull. Soc. Math. France, 99(1971), 3-27.\medskip

\noindent\lbrack S] Laurant Schwartz, Espaces de fonctiones
diff\'{e}rentielles a valeurs vectorielles, J. Analyse Math. (1954/55),
78-148.\medskip

\noindent\lbrack T] Fran\c{c}ois Treves, Topological vector spaces
distributions and kernels, Academic Press, New York, 1967.\medskip

\noindent\lbrack VW] D. A. Vogan and N.R. Wallach, Intertwining operators for
real reductive groups, Ad. in Math.,82(1990),203-243.\medskip

\noindent\lbrack vdN] Vincent van der Noort, Analytic parameter dependence of
Harish-Chandra modules for real reductive groups, A family affair, Thesis
University of Utrecht, 2009.\medskip

\noindent\lbrack RRGI,II] Nolan R. Wallach, Real reductive groups,I,II,
Academic Press, San Diego, 1988,1992.\medskip

\noindent\lbrack W] Nolan R. Wallach, Basic geometric invariant theory, to appear.

\end{document}